\newtheorem{theorem}{Theorem}
\newtheorem{othertheorem}{othertheorem}[section]
\newtheorem{lemma}[othertheorem]{Lemma}
\newtheorem{corollary}[othertheorem]{Corollary}
\newtheorem{proposition}[othertheorem]{Proposition}
\newtheorem{definition}[othertheorem]{Definition}
\newtheorem{rem}[othertheorem]{Remark}
\newcommand{\norm}[1]{\lVert#1\rVert}
\numberwithin{equation}{section}
\date{}
\begin{document}

\title{Asymptotic Statistical Analysis of Sparse Group LASSO via Approximate Message Passing Algorithm}

 \author[]{Kan Chen\thanks{Email: {\tt kanchen@sas.upenn.edu}} }
 \author[]{Zhiqi Bu \thanks{Email: {\tt zbu@sas.upenn.edu}} }
 \author[]{Shiyun Xu \thanks{Email: {\tt shiyunxu@sas.upenn.edu}}}

\affil[]{Graduate Group of Applied Mathematics and Computational Science
\\
University of Pennsylvania}
\maketitle

\begin{abstract}
Sparse Group LASSO (SGL) is a regularized model for high-dimensional linear regression problems with grouped covariates. SGL applies $l_1$ and $l_2$ penalties on the individual predictors and group predictors, respectively, to guarantee sparse effects both on the inter-group and within-group levels. In this paper, we apply the approximate message passing (AMP) algorithm to efficiently solve the SGL problem under Gaussian random designs. We further use the recently developed state evolution analysis of AMP to derive an asymptotically exact characterization of SGL solution. This allows us to conduct multiple fine-grained statistical analyses of SGL, through which we investigate the effects of the group information and $\gamma$ (proportion of $\ell_1$ penalty). With the lens of various performance measures, we show that SGL with small $\gamma$ benefits significantly from the group information and can outperform other SGL (including LASSO) or regularized models which does not exploit the group information, in terms of the recovery rate of signal, false discovery rate and mean squared error.
\end{abstract}

\section{Introduction}
\label{Introduction}

Suppose we observe an $n \times p$ design matrix $\mathbf{X}$, and the response $\mathbf{y}\in\mathbb{R}^n$ which is modeled by 
\begin{equation} \label{linear model}
    \mathbf{y} = \mathbf{X} \bm \beta + \bm w
\end{equation}
in which $\bm w\in\mathbb{R}^n$ is a noise vector. \smallskip

In many real life applications, we encounter the $p \gg n$ case in which standard linear regression fails to have a unique solution. To address this issue, \cite{tibshirani1996regression} introduced the LASSO by adding the $\ell_1$ penalty, whose magnitude is controlled by a scalar $\lambda>0$, and minimizing
\begin{equation}
    \frac{1}{2} \|\mathbf{y} - \mathbf{X} \bm \beta\|_2^2 + \lambda \|\bm \beta\|_1.
\end{equation}

The LASSO is arguably the most popular method that allows to select features or predictors, in the sense that the minimizer to the LASSO problem can be sparse, i.e. some entries of the minimizer are zero and hence not viewed as significant predictors. 

Suppose further that the predictors are divided into multiple groups. One could select a few of the groups rather than a few components of $\bm \beta$. An analogous procedure for group selection is Group LASSO \cite{yuan2006model}:
\begin{equation}
    \min_{\bm \beta} \frac{1}{2} \|\mathbf{y} - \sum_{l=1}^L \mathbf{X}_{l}\bm \beta_{l} \|_2^2 + \lambda \sum_{l=1}^L \sqrt{p_l} \|\bm \beta_l\|_2
\end{equation}
where $\mathbf{X}_l$ represents the predictors corresponding to the $l$-th group, with corresponding coefficient vector $\bm\beta_l$. Here the group information $\bm g\in\mathbb{R}^p$ is implicit in the definition of $\mathbf{X}_l$, indicating that the $i$-th predictor belongs to the $g_i$-th group. We assume that $p$ predictors are divided into $L$ groups and denote the size of the $l$-th group as $p_l$.

However, for a group selected by Group LASSO, all entries in the group are nonzero simultaneously. To yield both sparsity of groups (like Group LASSO) and sparsity within each group (like the LASSO), \cite{simon2013sparse} introduced the Sparse Group LASSO problem as follows:
\begin{equation} \label{objective}
    \min_{\beta \in \mathbb{R}^p}\frac{1}{2}\|\mathbf{y} - \sum_{l = 1}^{L} \mathbf{X}_l \bm\beta_l \|_2^2  + (1 - \gamma)\lambda \sum_{l=1}^L \sqrt{p_l} \|\bm\beta_l\|_2 +  \gamma \lambda \|\bm\beta\|_1
\end{equation}
where $\gamma \in [0,1]$ refers to the proportion of the LASSO fit in the overall penalty. If $\gamma = 1$, SGL is purely LASSO, while if $\gamma=0$, SGL reduces to Group LASSO. We denote the solution to the SGL problem as $\bm{\hat\beta}$. \smallskip

As a convex optimization problem, SGL can be solved by many existing methods, including ISTA (Iterative Shrinkage-Thresholding Algorithm) \cite{daubechies2004iterative} and FISTA (Fast Iterative Shrinkage-Thresholding Algorithm) \cite{beck2009fast}. Both methods rely on the derivation of the proximal operator of the SGL problem, which is non-trivial due to the \textit{non-separability} of the penalty when $\gamma\in[0,1)$. In previous literature, \cite{simon2013sparse} used the blockwise gradient descent algorithm with backtracking, instead of the proximal approach, to solve this problem. Other method like fast block gradient descent \cite{ida2019fast} can also be implemented and will be compared in \Cref{fig:MSE1}. 

In this paper, we establish the \textit{approximate message passing} (AMP) algorithm \cite{bayati2011lasso,donoho2011design,bayati2011dynamics,bayati2013estimating,donoho2009message,montanari2012graphical}  for SGL on top of its proximal operator. We then analyze the algorithmic aspects of SGL via the AMP theory. In general, AMP is a class of computationally efficient gradient-based algorithms originating from graphical models and extensively studied for many compressed sensing problems \cite{krzakala2012probabilistic,rangan2011generalized}.

To be specific, we derive, for fixed $\gamma$, the SGL AMP as follows: set $\bm\beta^0=\bm 0, \bm z^0=y$ and for $t>0$,
\begin{align}
    \bm \beta^{t+1} &= \eta_{\gamma}(\mathbf{X}^\top \mathbf{z}^t + \bm \beta^t, \theta_t)
\\
\mathbf{z}^{t+1} &= \mathbf{y} - \mathbf{X} \bm \beta^{t+1} + \frac{1}{\delta} \mathbf{z}^{t} \langle \eta_{\gamma}'(\mathbf{X}^\top \mathbf{z}^t + \bm \beta^t, \theta_t) \rangle.
\end{align}
Here the threshold $\theta_t$ is carefully designed and will be discussed in details in \Cref{Main Result}. Note that $\langle\bm v\rangle:=\sum_{i=1}^p v_i/p$ is the average of vector $\bm v$. Furthermore, $\eta_\gamma$ is the \textit{proximal operator}
\begin{align*}
\eta_\gamma(\bm s,\lambda)&:=\underset{\bm b}{\text{argmin}}\frac{1}{2}\|\bm s-\bm b\|^2
+(1 - \gamma)\lambda \sum_{l=1}^L \sqrt{p_l} \|\bm b_l\|_2 +  \gamma \lambda \|\bm b\|_1
\end{align*}
and $\eta_\gamma':=\nabla\circ\eta_\gamma$ is the diagonal of the Jacobian matrix of the proximal operator with respect to its first argument $\bm s$, with $\circ$ being the Hadamard product.

\textit{Empirically}, the simulation results in Figure \ref{fig:MSE1} and Table \ref{tab:MSE1} demonstrate the supremacy of AMP convergence speed over the two most well-known proximal gradient descent methods, ISTA and FISTA. We also compare these methods to the Nesterov-accelerated blockwise descent in \cite{simon2013sparse} and in R package \texttt{SGL}. We note that the Nesterov-accelerated ISTA (i.e. FISTA) outperforms the accelerated blockwise descent in terms of both the number of iterations and the wall-clock time (see Figure \ref{fig:times}(a) and the detailed analysis in Appendix \ref{app:proximal}). This observation suggests that using the proximal operator not only requires fewer iterations but also reduces the complexity of computation at each iteration. We pause to emphasize that, in general, the cost function $\mathcal{C}_{\bm X,\bm y}(\bm\beta):=\frac{1}{2}\|\mathbf{y} - \sum_{l = 1}^{L} \mathbf{X}_l \bm\beta_l \|_2^2  + (1 - \gamma)\lambda \sum_{l=1}^L \sqrt{p_l} \|\bm\beta_l\|_2 +  \gamma \lambda \|\bm\beta\|_1$ is not strictly convex. Therefore, we choose the optimization error (mean squared error, or MSE, between $\bm\beta^t$ and $\bm\beta$) as the measure of convergence, as there may exist $\hat{\bm\beta}$ far from $\bm\beta$ for which $\mathcal{C}(\hat{\bm\beta})$ is close to $\mathcal{C}(\bm\beta)$.

\begin{table}[!htb]
    \centering
    \begin{tabular}{ |p{1cm}||p{1cm}|p{1cm}|p{1cm}|p{1cm}|  }
 \hline
 \multicolumn{5}{|c|}{Number of iterations to reach certain precision} \\
 \hline 
 \textbf{MSE}   & $10^{-2}$    &$10^{-3}$&  $10^{-4}$ & $10^{-5}$ \\
 \hline
 ISTA   & 309    &629&   988 & 1367 \\
 FISTA    &   42  & 81   &158 & 230 \\
 AMP &4 & 6&  14 & 35 \\
 \hline
\end{tabular}
    \caption{Same settings as in Figure \ref{fig:MSE1} except $\lambda =1$ and the prior $\mathbf{\bm\beta}_0 $ is $5\times$Bernoulli(0.1).}
    \label{tab:MSE1}
\end{table}

In \cite{bayati2011lasso,mousavi2018consistent}, it has been rigorously proved that applying AMP to LASSO shows nice statistical properties, such as the exact characterization of the LASSO solution. However, applying AMP to a non-separable regularizer is more challenging. Along this line of research, recently, SLOPE AMP was rigorously developed in \cite{bu2019algorithmic} and \cite{celentano2019approximate} further analyzed a class of non-separable penalties which become asymptotically separable. Nevertheless, the question of whether AMP provably solves and characterizes SGL remains open for researchers.

\begin{figure}[!htb]
\centering
\includegraphics[width=0.5\linewidth]{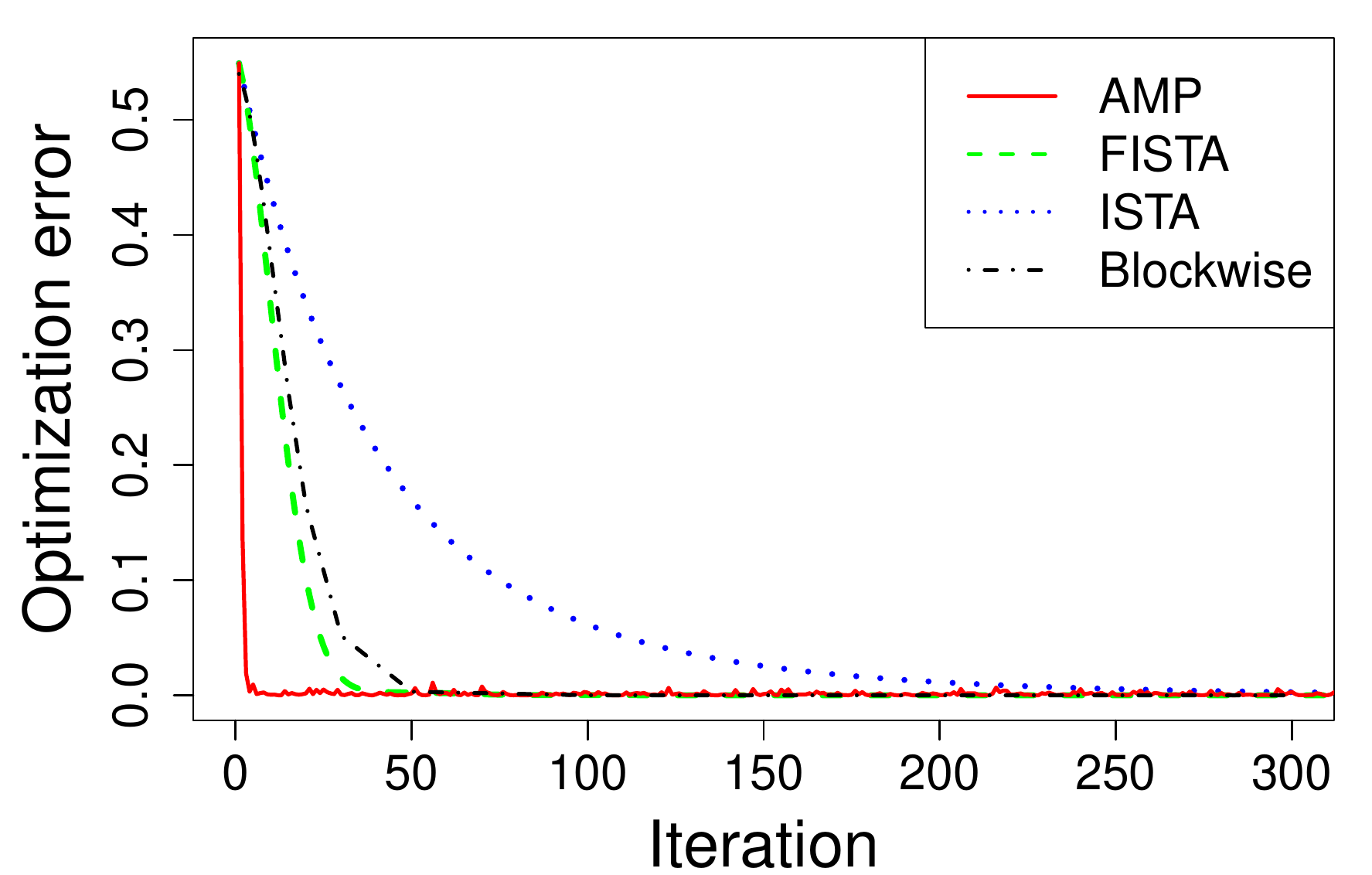}
\vspace{-0.2cm}
\caption{$p = 4000, n = 2000, \gamma = 0.5, \bm g=(1,\cdots,1)$, the entries of $\mathbf{X}$ are i.i.d. $\mathcal{N}(0,1/n)$, $\lambda = 2$, and the prior $\mathbf{\beta} $ is $5\times$Bernoulli(0.2).}
\label{fig:MSE1}
\end{figure}

Our contributions are as follows. We first derive a proximal operator of SGL on which the SGL AMP algorithm is based. We prove in \Cref{theorem2} that the algorithm solves the SGL problem \textit{asymptotically exactly} under i.i.d. Gaussian designs. The proof leverages the recent state evolution analysis \cite{berthier2017state} for non-separable penalties and shows that the state evolution characterizes the asymptotically exact behaviors of $\hat{\bm\beta}$. Specifically, the distribution of SGL solution is completely specified by a few parameters that are the solution to a certain fixed-point equation asymptotically. As a consequence, we can use the characterization of the SGL solution to analyze the behaviors of the $\hat{\bm\beta}$ precisely, as shown in \Cref{theorem1}. In particular, we investigate the effects of group information and $\gamma$ on the model performance. We empirically find that SGL can benefit significantly from good group information in the sense of MSE, true positive proportion (TPP), and false discovery proportion (FDP), while suffering from bad group information.

The rest of this paper is divided into four sections. In Section 2, we give some preliminary background of the AMP algorithm. In Section 3, we state our main theorems about the convergence and the characterization. In Section 4, we show the simulation results to support our theoretical results. Lastly, in Section 5, we conclude our paper and list some possible extensions of future work.

\section{Algorithm}
\label{Preliminary}

\subsection{Approximate Message Passing}
In order to guarantee that the AMP algorithm works for SGL, we need to make several assumptions.
\paragraph{Assumptions for AMP}
\begin{itemize} \label{assumptions}
    \item (\textbf{A1}) The measurement matrix $\mathbf{X}$ has independent entries following $\mathcal{N}(0, \frac{1}{n})$.
    \item (\textbf{A2}) The elements of signal $\bm \beta$ are i.i.d. copies of a random variable $\Pi$ with $\mathbb{E}(\mathrm{\Pi}^2 \max \{0, \log(\mathrm{\Pi})  \} ) < \infty$. We use $\mathbf{\Pi} \in \mathbb{R}^p$ to denote random vector with each component following i.i.d. $\mathrm{\Pi}$. 
    \item (\textbf{A3}) The elements of noise $\bm w$ are i.i.d. $\mathrm{\Pi}$ with $\sigma_{\bm w}^2:=\mathbb{E}(W^2 ) < \infty$.
    \item (\textbf{A4}) The ratio of sample size to feature size $\frac{n}{p}\to\delta \in (0, \infty)$ as $n,p \rightarrow \infty$.
\end{itemize}
We note that the assumptions are the same as those for the LASSO (and its generalization) in \cite{bu2019algorithmic}, and the second-moment assumptions \textbf{(A2)} and \textbf{(A3)} can be relaxed. For example, we can instead assume that $\bm w$ has an empirical distribution that converges weakly to $W$, with $\|\bm w\|^2/p\to\mathbb{E}(W^2)<\infty$. In general, we may extend assumptions \textbf{(A1)} and \textbf{(A2)} to a much broader range of cases, as discussed in Section \ref{extensions}.
Additionally, we need one extra assumption specifically designed for the group information in SGL as follows.
\begin{itemize}
    \item (\textbf{A5}) The relative ratio of each group size, $p_l/p\to r_l \in (0, 1)$ as $p \rightarrow \infty$.
\end{itemize}
Now we write the SGL AMP algorithm based on \cite{donoho2011design}:
\begin{equation}\label{recursion 5}
    \bm{\beta}^{t+1} = \eta_{\gamma,\bm g}(\mathbf{X}^\top \mathbf{z}^t + \bm{\beta}^t, \alpha \tau_t)
\end{equation}
\begin{equation}\label{recursion 6}
    \mathbf{z}^{t+1} = \mathbf{y} - \mathbf{X} \bm{\beta}^{t+1} + \frac{1}{\delta} \mathbf{z}^{t} \langle \eta_{\gamma,\bm g}'(\mathbf{X}^\top \mathbf{z}^t + \bm{\beta}^t, \alpha \tau_t) \rangle
\end{equation}
\begin{equation}\label{state evolution}
    \tau_{t+1}^2  = \sigma_{\bm w}^2 + \lim_{p \rightarrow \infty}\frac{1}{\delta p} \mathbb{E}\|\eta_{\gamma,\bm g}(\mathbf{\Pi} + \tau_t \bm Z, \alpha \tau_t) - \mathbf{\Pi}\|_2^2
\end{equation}
where $\bm Z$ is the standard Gaussian $\mathcal{N}(0,\mathcal{I}_p)$ and the expectation is taken with respect to both $\mathbf{\Pi}$ and $\bm Z$. We denote $\eta_{\gamma,\bm g}(\bm s,\lambda): \mathbb{R}^{p}\times\mathbb{R} \rightarrow \mathbb{R}^p$ as the proximal operator for SGL, which we will derive in Section \ref{sec:proximal}. We notice that, comparing AMP to the standard proximal gradient descent, the thresholds are related to $(\alpha,\tau_t)$ instead of to $\lambda$. On one hand, $\tau_t$ is derived from equation \eqref{state evolution}, known as the \textbf{state evolution}, which relies on $\alpha$. On the other hand, $\alpha$ corresponds uniquely to $\lambda$ via equation (\ref{calibration}) which is so called \textbf{calibration}: with $\tau_*:=\lim_{t\to\infty}\tau_t$,
\begin{equation}\label{calibration}
    \lambda = \alpha \tau_*\left(1 - \lim_{p \rightarrow \infty}\frac{1}{\delta} \langle \eta_{\gamma,\bm g}'(\mathbf{\Pi}+ \tau_*\bm Z, \alpha \tau_*) \rangle \right).
\end{equation}

\subsection{Proximal Operator and Derivative }
\label{sec:proximal}
In this section we derive the proximal operator  \cite{daubechies2004iterative} for SGL. In comparison to \cite{simon2013sparse,foygel2010exact,friedman2010note}, which all used subgradient conditions to solve SGL, our proximal-based methods can be much more efficient in terms of convergence speed and accuracy (for details of comparison, see Appendix \ref{app:proximal}).

Denote $ \beta^{(j)}$ as the $j$-th component of $\bm\beta$ and the cost function $\mathcal{C}_{\lambda, \gamma,\bm g}$ as
\begin{equation*} \label{cost}
   \begin{split}
        &\mathcal{C}_{\lambda, \gamma,\bm g}(s,\bm \beta)
        = \frac{1}{2}\|s -  \bm \beta \|_2^2 + (1 - \gamma)\lambda \sum_{l=1}^L \sqrt{p_l} \|\bm \beta_l\|_2 +  \gamma \lambda \|\bm \beta\|_1.
   \end{split}
\end{equation*}
We sometimes ignore the dependence on the subscripts when there is no confusion. When $g_j=l$ and $\|\bm \beta_l\|_2\neq 0$, we set $\frac{\partial\mathcal{C}}{\partial \beta^{(j)}}=0$ and denote $l_j:=\{i: g_i=g_j\}$. Then the explicit formula is
\begin{equation} \label{eta}
    \eta_{\gamma}(\bm s, \lambda)^{(j)} =  \eta_{\text{soft}}(s^{(j)}, \gamma  \lambda)\left(1 - \frac{(1 - \gamma)\lambda \sqrt{p_{l_j}}}{ \| \eta_{\text{soft}}(\bm s_{l_j}, \gamma \lambda) \|_2 }\right)
\end{equation}
when $\eta_{\text{soft}}(s_{l_j}, \gamma \lambda)\in\mathbb{R}^{p_{l_j}}$ has a non-zero norm. Here 
\begin{equation}
    \eta_{\text{soft}}(x; b) = \begin{cases} 
      x - b & x > b \\
       x + b & x < -b  \\
      0 & \text{otherwise}
   \end{cases}
\end{equation}
is the soft-thresholding operator. 

We emphasize that \eqref{eta} is incomplete due the the non-differentiability of $\|\bm\beta_l\|_2$ at $\bm\beta_l=0$. Denoting equation \eqref{eta} as $\hat\eta_\gamma^{(j)}$, we have the full formula of the proximal operator as
\begin{align}
\eta_\gamma(\bm s,\lambda)^{(j)}=
\begin{cases}
\hat\eta_\gamma^{(j)} & \text{ if $\|\eta_{\text{soft}}(\bm s_{l_j},\gamma\lambda)\|_2 > (1-\gamma)\lambda \sqrt{p_{l_j}}$}
\\
0 & \text{ otherwise }
\end{cases}
\label{eq:prox_op}
\end{align}
The details of derivation are left to Appendix \ref{app:operator}.

Similarly, the derivative of the proximal operator w.r.t. $s$ also has two forms, split by the same conditions as above. For simplicity we only describe the non-zero form here: 
\begin{equation}
        \eta_\gamma'(\bm s,  \lambda)^{(j)} = \textbf{1}\{ |s^{(j)}| > \gamma \lambda \}
        \cdot \left[1  - \frac{(1 - \gamma) \lambda \sqrt{p_{l_j}}}{\|\eta_{\text{soft}}(\bm s_{l_j},\gamma\lambda)\|_2} \left(1 - \frac{\eta_{\text{soft}}^2(s^{(j)}, \gamma \lambda)}{\|\eta_{\text{soft}}(\bm s_{l_j},\gamma\lambda)\|_2^2}\right) \right]
\end{equation}

We note that in \cite{simon2013sparse}, the SGL problem is divided into $L$ sub-problems according to the group information. Then subgradient conditions are used to construct majorization-minimization problems. These problems are solved cyclically, via accelerated gradient descent, in a blockwise or groupwise manner. Along this line of research, there have been other blockwise descent methods designed for Group LASSO \cite{yang2015fast}. In contrast, our proximal operator is unified, as can be seen by comparing Algorithm \ref{alg:block} and Algorithm \ref{alg:ista} in Appendix \ref{app:proximal}. We note that our proximal operator also has a groupwise flavor, but all groups are updated independently and simultaneously, thus improving the convergence speed. Since SGL AMP is built on the proximal operator, we will validate the proximal approach by introducing ISTA and FISTA for SGL, with detailed complexity analysis and convergence analysis in Appendix \ref{app:proximal}.

\section{Main Results}
\label{Main Result}

\subsection{State Evolution and Calibration}
Notice that in SGL AMP, we use $\theta_t$ as the threshold, whose design requires state evolution and calibration. Thus we study the properties of the state evolution recursion \eqref{state evolution}. To simplify the analysis, we consider the finite approximation of state evolution and present precise conditions which guarantee that the state evolution converges efficiently.

\begin{proposition}
\label{prop:F}
Let $\mathbf{F}_\gamma (\tau_t^2, \alpha \tau_t)= \sigma_{\bm w}^2 + \frac{1}{\delta p} \mathbb{E}\|\eta_{\gamma}(\mathbf{\Pi} + \tau_t \bm Z, \alpha \tau_t) - \mathbf{\Pi} \|_2^2$
and define $\mathcal{A}(\gamma) = \{ \alpha :\delta \geq 2T(\gamma \alpha) - 2(1- \gamma)\alpha \sqrt{2T(\gamma \alpha)} + (1 - \gamma)^2 \alpha^2 \}$
with $T(z) = (1 + z^2) \Phi(-z) - z \phi(z)$, $\phi(z)$ being the standard Gaussian density and $\Phi(z) = \int_{-\infty}^z \phi(x) dx$. For any $\sigma_{\bm w}^2 > 0$, $\alpha \in \mathcal{A}(\gamma) $ , the fixed point equation $\tau^2 = \mathbf{F}_{\gamma}(\tau^2, \alpha \tau)$ admits a unique solution. Denoting the solution as $\tau_* = \tau_* (\alpha)$, we have $\lim_{t \rightarrow \infty}\tau_t \rightarrow \tau_*(\alpha)$, where the convergence is monotone under any initial condition. Finally $\left|\frac{d\mathbf{F}_{\gamma}}{d\tau^2}\right| < 1$ at $\tau = \tau_*$
\end{proposition}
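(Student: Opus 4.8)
The plan is to reduce the whole statement to the study of the one–dimensional map $G(x):=\mathbf{F}_\gamma(x,\alpha\sqrt{x})$ in the variable $x=\tau^2\ge 0$, and to establish three structural facts about $G$: (i) $G(0)=\sigma_{\bm w}^2>0$; (ii) $G$ is nondecreasing and concave on $[0,\infty)$; and (iii) $G$ has asymptotic slope $\tfrac1\delta\,\Xi(\alpha)$, where $\Xi(\alpha):=2T(\gamma\alpha)-2(1-\gamma)\alpha\sqrt{2T(\gamma\alpha)}+(1-\gamma)^2\alpha^2$ is exactly the quantity that $\delta$ is compared against in the definition of $\mathcal A(\gamma)$. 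Fact (i) is immediate, since at $\tau=0$ the threshold $\alpha\tau$ vanishes, $\eta_\gamma(\mathbf{\Pi},0)=\mathbf{\Pi}$, and the risk term disappears. Granting (i)–(iii), membership $\alpha\in\mathcal A(\gamma)$ forces the asymptotic slope to be at most $1$, and existence, uniqueness, monotone convergence, and the derivative bound can all be read off from the geometry of an increasing concave self-map whose graph starts strictly above the diagonal.

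The computation of the asymptotic slope in (iii) rests on the positive homogeneity of the SGL proximal operator, $\eta_\gamma(c\bm s,c\lambda)=c\,\eta_\gamma(\bm s,\lambda)$ for $c>0$, which holds because both penalties defining $\eta_\gamma$ are $1$-homogeneous. Writing $\eta_\gamma(\mathbf{\Pi}+\tau\bm Z,\alpha\tau)=\tau\,\eta_\gamma(\mathbf{\Pi}/\tau+\bm Z,\alpha)$ recasts the risk term as $\tfrac{\tau^2}{\delta p}\,\mathbb E\|\eta_\gamma(\mathbf{\Pi}/\tau+\bm Z,\alpha)-\mathbf{\Pi}/\tau\|_2^2$, so that $G(x)/x\to \tfrac1{\delta p}\,\mathbb E\|\eta_\gamma(\bm Z,\alpha)\|_2^2$ as $x\to\infty$. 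Using the closed form \eqref{eq:prox_op}, the active-group value collapses to $\|\eta_\gamma(\bm Z_l,\alpha)\|_2=\bigl(\|\eta_{\text{soft}}(\bm Z_l,\gamma\alpha)\|_2-(1-\gamma)\alpha\sqrt{p_l}\bigr)_+$; since \textbf{(A5)} makes every group size diverge, the law of large numbers gives $\|\eta_{\text{soft}}(\bm Z_l,\gamma\alpha)\|_2/\sqrt{p_l}\to\sqrt{2T(\gamma\alpha)}$, where the scalar identity $\mathbb E[\eta_{\text{soft}}(Z,\gamma\alpha)^2]=2T(\gamma\alpha)$ follows from a direct Gaussian integral matching the definition of $T$. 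Summing over groups (which sum to $p$) and squaring yields the slope $\tfrac1\delta\,\Xi(\alpha)$, which is precisely why $\mathcal A(\gamma)$ takes its stated form.

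The crux — and the step I expect to be hardest — is the concavity in fact (ii). In the separable LASSO case this is a one-variable calculus exercise, but here the group-$\ell_2$ term couples all coordinates within a group through $\|\eta_{\text{soft}}(\bm s_{l_j},\gamma\alpha)\|_2$, so the integrand is neither separable nor everywhere smooth. The plan is to differentiate the risk under the expectation — justified by the moment condition in \textbf{(A2)} — and apply Gaussian integration by parts (Stein's identity) to re-express $G'(\tau^2)$ through the expected divergence $\langle\eta_\gamma'\rangle$ of the proximal map. Because $\eta_\gamma$ is the proximal operator of a convex function, its Jacobian is symmetric positive semidefinite with spectrum in $[0,1]$; I intend to combine this with the explicit two-branch formula \eqref{eta}–\eqref{eq:prox_op} on the active set to show that $G'(\tau^2)$ is nonincreasing, i.e. $G$ is concave. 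Non-differentiability across the branch boundary is handled by observing that the boundary event has Gaussian measure zero and that $\eta_\gamma$ remains Lipschitz across it, so differentiation under the integral is legitimate.

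Finally I assemble the pieces. With $G$ concave, $G(0)=\sigma_{\bm w}^2>0$, and asymptotic slope $\tfrac1\delta\Xi(\alpha)\le 1$ for $\alpha\in\mathcal A(\gamma)$, the concave function $G(x)-x$ is strictly positive at $x=0$ and eventually nonpositive, hence crosses zero exactly once, giving existence and uniqueness of $\tau_*^2$. At the crossing $G-\mathrm{id}$ passes from positive to negative, so its slope there is negative, i.e. $G'(\tau_*^2)<1$, while monotonicity of $G$ gives $G'(\tau_*^2)\ge 0>-1$; together these yield $|d\mathbf F_\gamma/d\tau^2|<1$ at $\tau_*$. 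Monotone convergence $\tau_t\to\tau_*$ from any initialization then follows from the standard cobweb argument for an increasing concave self-map: $G(x)>x$ below the fixed point and $G(x)<x$ above it, forcing the iterates to increase (respectively decrease) monotonically to $\tau_*^2$. The one delicate point is the boundary of $\mathcal A(\gamma)$, where the asymptotic slope equals $1$ and the crossing degenerates; there I would either invoke strict concavity together with $\sigma_{\bm w}^2>0$ in a limiting refinement, or restrict attention to the interior where the crossing is transversal.
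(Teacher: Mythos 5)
Your overall skeleton matches the paper's: show $G(0)=\sigma_{\bm w}^2>0$, show $G$ is increasing and concave in $\tau^2$, identify the asymptotic slope with $\frac1\delta\bigl(\sqrt{2T(\gamma\alpha)}-(1-\gamma)\alpha\bigr)^2$, and read off existence, uniqueness, the derivative bound and monotone convergence from the geometry of a concave increasing self-map. Your route to the asymptotic slope (positive homogeneity of $\eta_\gamma$ plus the law of large numbers applied to $\|\eta_{\text{soft}}(\bm Z_l,\gamma\alpha)\|_2/\sqrt{p_l}\to\sqrt{2T(\gamma\alpha)}$) is cleaner than the paper's, which computes $\lim_{\tau^2\to\infty} d\mathbf F_\gamma/d\tau^2$ directly, and your observation about the boundary of $\mathcal A(\gamma)$ (where the asymptotic slope equals $1$ and the crossing degenerates) is a real issue that the paper glosses over.

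The genuine gap is at the step you yourself flag as the crux: concavity. Your proposed mechanism --- Stein's identity to express $G'(\tau^2)$ via the expected divergence $\langle\eta_\gamma'\rangle$, combined with the fact that the Jacobian of a proximal map is symmetric PSD with spectrum in $[0,1]$ --- does not yield concavity. Spectrum in $[0,1]$ gives pointwise \emph{bounds} on $G'$ (essentially $0\le G'\le$ const), but says nothing about $G'$ being \emph{nonincreasing} in $\tau^2$; monotonicity of the slope is a second-order statement that this first-order structural property cannot supply. The paper's proof spends nearly all of its effort here: it first uses the law of large numbers to replace $\sqrt{p_{l}}/\|\eta_{\text{soft}}(\bm s_{l},\gamma\alpha\tau)\|_2$ by its deterministic limit, reducing $\mathbf F_\gamma$ to an asymptotically separable scalar functional; it then expands the risk into four terms, inherits concavity of $\mathbb E(\eta_{\text{soft}}-\Pi)^2$ from the Bayati--Montanari LASSO analysis, observes the $(1-\gamma)^2\alpha^2\tau^2$ term is linear, and proves concavity of the remaining cross term $H_\gamma(\tau^2)=\tau\Pi\,\mathbb E\eta_{\text{soft}}/\sqrt{\mathbb E\eta_{\text{soft}}^2}-\tau\sqrt{\mathbb E\eta_{\text{soft}}^2}$ by explicit truncated-normal moment formulas and a change of variables $c=(-\Pi+\gamma\alpha\tau)/\tau$. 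None of this computation is replaced by anything in your sketch, so as written the proposal asserts rather than proves the property on which everything else rests.
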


A demonstration of $\mathcal{A}$ is given in Figure \ref{alpha}. We note that for all $\gamma<1$, $\mathcal{A}$ has upper and lower bounds; however, when $\gamma=1$, i.e. for LASSO, there is no upper bound. We provide the proof of \Cref{prop:F} in Appendix \ref{app:se}. 

\vspace{-0.5cm}
\begin{figure}[!htb]
\centering
    \includegraphics[width=5.5cm]{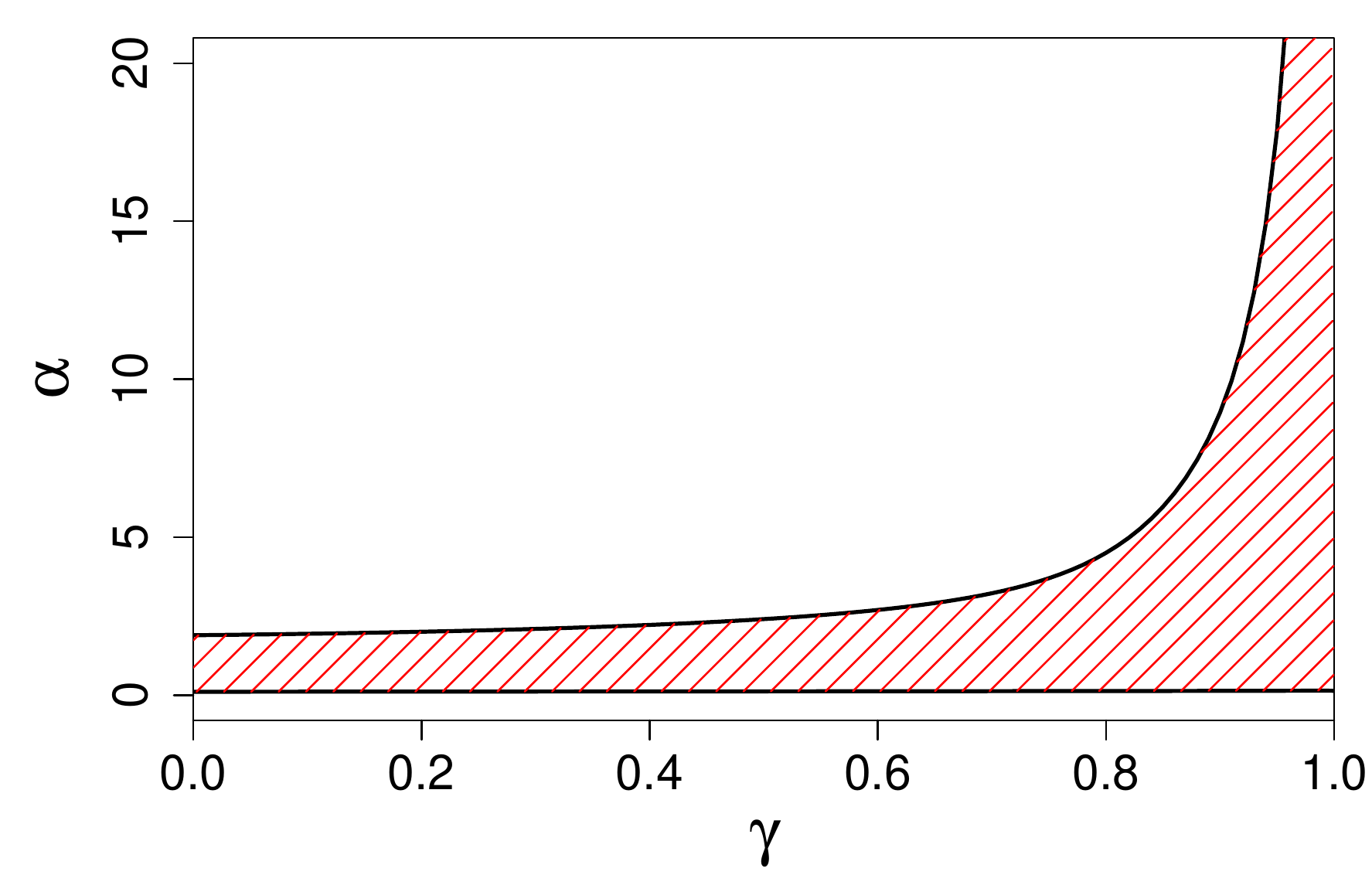}
    \vspace{-0.5cm}
\caption{$\mathcal{A}$ when $\delta=0.2$ is represented in the red shaded region.}
    \label{alpha}
\end{figure}
\vspace{-0.5cm}
\begin{rem}
When $\alpha$ is outside the set $\mathcal{A}$ in Proposition \ref{prop:F}, we must have $\alpha>\mathcal{A}_{\max}$, since we use a non-negative $\lambda$ as penalty which guarantees $\alpha > \mathcal{A}_{\min}$. To see this, consider $\alpha\not\in\mathcal{A}$, we have $\tau=\infty$ and hence the dominant term in $\pi+\tau\bm{Z}$ is $\tau\bm{Z}$. We can view $\pi$ as if vanishing and easily derive $\alpha>\mathcal{A}_{\min}$ from the state evolution. For $\alpha > \mathcal{A}_{\max}$, the state evolution in fact still converges. \footnote{We note that $\mathcal{A}$ is a sufficient but not necessary condition for the state evolution to converge. The reason that we split the analysis at $\alpha=\mathcal{A}_{\max}$ is because, for $\alpha > \mathcal{A}_{\max}$, the SGL estimator is 0. Besides, we note that the set $\mathcal{A}$ only affects the state evolution. Hence when $\alpha > \mathcal{A}_{\max}$, the calibration is still valid and the mapping between $\alpha$ and $\lambda$ is monotone.}
\end{rem}

Before we employ the finite approximation of state evolution to describe the calibration (\ref{calibration}), we explain the necessity of calibration by the following lemma. 

\begin{lemma}
\label{lemma:3.2}
For fixed $\gamma$, a stationary point $\hat{\bm{\beta}}$ with corresponding $\hat{\bm{z}}$ of the AMP iteration (\ref{recursion 5}), (\ref{recursion 6}) with $\theta_t = \theta_*$ is a minimizer of the SGL cost function in \eqref{objective} with
$ \lambda = \theta_* \left(1 - \frac{1}{\delta} \langle \eta_\gamma'(\mathbf{X}^\top \hat{\mathbf{z}} + \hat{\bm{\beta}}, \theta_*) \rangle   \right)$.
\end{lemma}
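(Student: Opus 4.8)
The plan is to exploit the standard correspondence between fixed points of the AMP recursion and global minimizers of the (convex) SGL cost. Write the SGL penalty without its overall weight as $P(\bm b) := (1-\gamma)\sum_{l=1}^L\sqrt{p_l}\|\bm b_l\|_2 + \gamma\|\bm b\|_1$, so that the objective in \eqref{objective} is $\tfrac12\|\mathbf{y}-\mathbf{X}\bm\beta\|_2^2 + \lambda P(\bm\beta)$ and the proximal operator reads $\eta_\gamma(\bm s,\theta)=\underset{\bm b}{\text{argmin}}\,\tfrac12\|\bm s-\bm b\|_2^2+\theta P(\bm b)$. Since $P$ is convex, the minimizer $\hat{\bm\beta}=\eta_\gamma(\bm s,\theta)$ is characterized by the subgradient condition $0\in\hat{\bm\beta}-\bm s+\theta\,\partial P(\hat{\bm\beta})$.

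First I would write down the two defining equations of a stationary point $(\hat{\bm\beta},\hat{\bm z})$ with $\theta_t\equiv\theta_*$: from \eqref{recursion 5}, $\hat{\bm\beta}=\eta_\gamma(\mathbf{X}^\top\hat{\bm z}+\hat{\bm\beta},\theta_*)$, and from \eqref{recursion 6}, $\hat{\bm z}=\mathbf{y}-\mathbf{X}\hat{\bm\beta}+\tfrac1\delta\hat{\bm z}\,\langle\eta_\gamma'(\mathbf{X}^\top\hat{\bm z}+\hat{\bm\beta},\theta_*)\rangle$. Applying the subgradient characterization with $\bm s=\mathbf{X}^\top\hat{\bm z}+\hat{\bm\beta}$ and $\theta=\theta_*$, the first equation collapses (the $\hat{\bm\beta}$ terms cancel) to the inclusion $\mathbf{X}^\top\hat{\bm z}\in\theta_*\,\partial P(\hat{\bm\beta})$. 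The second equation rearranges to $\mathbf{y}-\mathbf{X}\hat{\bm\beta}=c\,\hat{\bm z}$, where $c:=1-\tfrac1\delta\langle\eta_\gamma'(\mathbf{X}^\top\hat{\bm z}+\hat{\bm\beta},\theta_*)\rangle$ is exactly the scalar appearing in the claimed calibration.

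Combining the two gives $\mathbf{X}^\top(\mathbf{y}-\mathbf{X}\hat{\bm\beta})=c\,\mathbf{X}^\top\hat{\bm z}\in c\theta_*\,\partial P(\hat{\bm\beta})$, where the inclusion uses $\partial(c\theta_* P)=c\theta_*\,\partial P$ for the positive scalar $c\theta_*$. Setting $\lambda=c\theta_*=\theta_*\!\left(1-\tfrac1\delta\langle\eta_\gamma'\rangle\right)$, this is precisely the first-order optimality condition $0\in-\mathbf{X}^\top(\mathbf{y}-\mathbf{X}\hat{\bm\beta})+\lambda\,\partial P(\hat{\bm\beta})$ for the SGL cost. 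Because that cost is convex, the first-order condition is sufficient for global optimality, so $\hat{\bm\beta}$ minimizes \eqref{objective} at this $\lambda$, which is the claim.

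The main obstacle is the bookkeeping on two fronts. First, one must handle $\partial P$ carefully where $\hat{\bm\beta}$ vanishes: the subdifferential of the group term $\|\bm b_l\|_2$ is the closed unit ball when $\bm b_l=0$, so the branch structure of $\eta_\gamma$ in \eqref{eq:prox_op} must be matched against these cases to verify $\mathbf{X}^\top\hat{\bm z}\in\theta_*\partial P(\hat{\bm\beta})$ on both active and inactive groups. Second, rescaling the inclusion by $c\theta_*$ is legitimate, and $\lambda$ is a valid nonnegative weight, only if $c>0$; I would confirm $0\le\tfrac1\delta\langle\eta_\gamma'\rangle<1$ using that each coordinate of $\eta_\gamma'$ lies in $[0,1]$ together with the control on $\langle\eta_\gamma'\rangle$ afforded by $\alpha\in\mathcal{A}(\gamma)$ in Proposition~\ref{prop:F}, so that the calibration indeed returns $\lambda\ge 0$.
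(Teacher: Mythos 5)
Your proposal is correct and follows essentially the same route as the paper's proof: the fixed point of \eqref{recursion 5} gives the inclusion $\mathbf{X}^\top\hat{\bm z}\in\theta_*\,\partial P(\hat{\bm\beta})$ (with $P$ your unweighted penalty), the fixed point of \eqref{recursion 6} gives $\mathbf{y}-\mathbf{X}\hat{\bm\beta}=\bigl(1-\tfrac{1}{\delta}\langle\eta_\gamma'\rangle\bigr)\hat{\bm z}$, and combining the two reproduces the SGL stationarity condition with the stated $\lambda$. The only real difference is cosmetic: you obtain the subgradient inclusion directly from the variational characterization of the proximal operator, whereas the paper derives it by explicitly manipulating the closed form \eqref{eta} on the active groups (assuming $\hat{\bm\beta}_l\neq\bm 0$), so your route handles the inactive-group branch more cleanly, and your caveat about the sign of $1-\tfrac{1}{\delta}\langle\eta_\gamma'\rangle$ flags a point the paper leaves implicit.
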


Setting $\theta_* = \alpha \tau_*$, we are in the position to define the finite approximation of calibration between $\alpha$ and $\lambda$ by
\begin{equation}
    \lambda = \alpha \tau_{*} \left(1 - \frac{1}{\delta}  \langle\eta_{\gamma}'(\mathbf{\Pi} + \tau_* \bm Z, \alpha \tau_*)\rangle  \right)
    \label{eq:cali_finite}
\end{equation}

In practice, we need to invert (\ref{eq:cali_finite}) to input $\lambda$ and recover
\begin{equation}
    \alpha (\lambda) \in \{ a  \in \mathcal{A}: \lambda(a) = \lambda  \}
\end{equation}
The next proposition and corollary imply that the mapping of $\lambda \rightarrow \alpha (\lambda)$ is well-defined and easy to compute.

\begin{proposition} 
\label{prop:bijective}
  The function $\alpha \rightarrow \lambda(\alpha)$ is continuous on $\mathcal{A}(\gamma)$ with $\lambda(\min\mathcal{A}) = -\infty$ and $\lambda(\max\mathcal{A}) = \lambda_{\max}$ for some constant $\lambda_{\max}$ depending on $\Pi$ and $\gamma$. Therefore, the function $\lambda \rightarrow \alpha(\lambda)$ satisfying $\alpha(\lambda) \in \{ \alpha \in \mathcal{A}(\gamma): \lambda(\alpha) = \lambda \}$ exists where $\lambda \in (-\infty, \lambda_{\max})$.
\end{proposition}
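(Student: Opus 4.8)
The plan is to establish three facts about the calibration map $\alpha\mapsto\lambda(\alpha)$ defined by \eqref{eq:cali_finite}: that it is continuous on the interior of $\mathcal{A}(\gamma)$, that $\lambda(\alpha)\to-\infty$ as $\alpha\downarrow\min\mathcal{A}$, and that $\lambda(\alpha)$ converges to a finite constant as $\alpha\uparrow\max\mathcal{A}$. The existence of the inverse on $(-\infty,\lambda_{\max})$ then follows at once from the intermediate value theorem. For continuity I would decompose $\lambda(\alpha)=\alpha\tau_*(\alpha)\big(1-\tfrac{1}{\delta}\langle\eta_\gamma'(\mathbf{\Pi}+\tau_*\bm Z,\alpha\tau_*)\rangle\big)$ into the map $\alpha\mapsto\tau_*(\alpha)$ and the averaged-derivative term. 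For the former, apply the implicit function theorem to $G(\tau^2,\alpha):=\mathbf{F}_\gamma(\tau^2,\alpha\tau)-\tau^2$, which is $C^1$ jointly by differentiation under the integral sign: \Cref{prop:F} gives $G(\tau_*^2,\alpha)=0$ and $\partial_{\tau^2}G=\mathrm{d}\mathbf{F}_\gamma/\mathrm{d}\tau^2-1<0$ at $\tau_*$, so $\tau_*(\alpha)$ is $C^1$, hence continuous, on the interior. For the latter, $\eta_\gamma'$ takes values in $[0,1]$ and is continuous in its arguments off a Lebesgue-null set, so dominated convergence renders the expectation jointly continuous in $(\tau_*,\alpha)$.

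For the lower endpoint, the governing fact is that the defining inequality of $\mathcal{A}(\gamma)$ is exactly $\lim_{\tau\to\infty}\mathrm{d}\mathbf{F}_\gamma/\mathrm{d}\tau^2\le 1$ along the branch on which the limiting proximal output stays active; thus at $\min\mathcal{A}$ the asymptotic slope of the state-evolution map equals $1$ and the fixed point escapes, $\tau_*(\alpha)\to+\infty$ as $\alpha\downarrow\min\mathcal{A}$. I would then use the positive $1$-homogeneity of $\eta_\gamma$ — equivalently the scale invariance $\eta_\gamma'(c\bm s,c\lambda)=\eta_\gamma'(\bm s,\lambda)$ — to pass to the limit: writing $\mathbf{\Pi}+\tau_*\bm Z=\tau_*(\bm Z+\mathbf{\Pi}/\tau_*)$ and invoking \textbf{(A2)} to dominate $\mathbf{\Pi}/\tau_*\to0$, one obtains $\langle\eta_\gamma'(\mathbf{\Pi}+\tau_*\bm Z,\alpha\tau_*)\rangle\to\langle\eta_\gamma'(\bm Z,\alpha)\rangle$. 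A Gaussian tail estimate — the group analogue of the Mills-ratio inequality $\phi(x)>x\Phi(-x)$ that settles the LASSO case $\gamma=1$ — then shows $\tfrac{1}{\delta}\langle\eta_\gamma'(\bm Z,\min\mathcal{A})\rangle>1$, so the calibration factor converges to a strictly negative limit; multiplied by $\alpha\tau_*\to+\infty$ this forces $\lambda(\alpha)\to-\infty$.

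The upper endpoint behaves differently, and this asymmetry is the crux. For $\gamma<1$, $\max\mathcal{A}$ lies on the branch where the threshold is so large that the large-$\tau$ proximal output is entirely annihilated by the group shrinkage; there the asymptotic slope of $\mathbf{F}_\gamma$ is $0$ rather than $1$, so $\tau_*(\alpha)$ stays bounded and in fact converges to the finite value $\tau_\infty=\sqrt{\sigma_{\bm w}^2+\mathbb{E}[\Pi^2]/\delta}$ as $\alpha\uparrow\max\mathcal{A}$. Since $\tau_*$ remains finite and the factor $1-\tfrac{1}{\delta}\langle\eta_\gamma'\rangle$ is bounded, $\lambda(\max\mathcal{A})=\max\mathcal{A}\cdot\tau_\infty\big(1-\tfrac{1}{\delta}\langle\eta_\gamma'(\mathbf{\Pi}+\tau_\infty\bm Z,\max\mathcal{A}\cdot\tau_\infty)\rangle\big)=:\lambda_{\max}$ is a finite constant depending only on $\gamma$ and the law of $\Pi$ (through $\delta$ and $\sigma_{\bm w}$). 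With $\lambda$ continuous on $\mathcal{A}$ and its two one-sided limits equal to $-\infty$ and $\lambda_{\max}$, the intermediate value theorem supplies, for each $\lambda\in(-\infty,\lambda_{\max})$, some $\alpha(\lambda)\in\mathcal{A}$ with $\lambda(\alpha(\lambda))=\lambda$, which is the assertion.

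The main obstacle is the boundary analysis rather than the continuity or the final IVT step. At $\min\mathcal{A}$ the delicate point is justifying the interchange of the limit $\tau_*\to\infty$ with the coordinate average defining $\langle\eta_\gamma'\rangle$: unlike the separable LASSO derivative, the SGL derivative couples the coordinates of a group through $\|\eta_{\text{soft}}(\bm s_{l_j},\gamma\lambda)\|_2$, so the passage to the limit needs a uniform-integrability argument resting on \textbf{(A2)} and on concentration of the group norms. The second delicate point, again genuinely new only for $\gamma<1$, is proving that the limiting factor is strictly negative at $\min\mathcal{A}$ and that the large-$\tau$ estimator is fully thresholded at $\max\mathcal{A}$; both reduce to monotonicity and tail inequalities for the group-shrinkage map that generalize the classical one-dimensional Mills-ratio bound.
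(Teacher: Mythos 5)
Your proposal follows essentially the same route as the paper's proof: continuity of $\tau_*(\alpha)$ via the implicit function theorem applied to $\tau^2-\mathbf{F}_\gamma(\tau^2,\alpha\tau)$ using $\frac{d\mathbf{F}_\gamma}{d\tau^2}<1$ at the fixed point, divergence $\tau_*\to\infty$ at $\min\mathcal{A}$ because the asymptotic slope of $\mathbf{F}_\gamma$ tends to $1$ there, the Mills-ratio inequality $z\Phi(-z)\le\phi(z)$ combined with the defining equality of $\mathcal{A}$ to show the calibration factor has a strictly negative limit (so $\lambda\to-\infty$), boundedness of $\tau_*$ at $\max\mathcal{A}$ giving a finite $\lambda_{\max}$, and the intermediate value theorem. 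Your treatment of the upper endpoint is in fact slightly more explicit than the paper's (which only asserts boundedness of $\tau_*$ there), but the argument is the same in substance.
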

Given $\lambda$, Proposition \ref{prop:bijective} claims that $\alpha$ exists and the following result guarantees its uniqueness.
\begin{corollary}
\label{cor:monotone}
For $\lambda<\lambda_{\max}, \sigma_{\bm w}^2 >0$, there exists a unique $\alpha \in \mathcal{A}(\gamma)$ such that $\lambda(\alpha) = \lambda$ as defined in \eqref{eq:cali_finite}. Hence the function $\lambda \rightarrow \alpha(\lambda)$ is continuous and non-decreasing with $\alpha((-\infty, \lambda_{\max})) = \mathcal{A}(\gamma)$.
\end{corollary}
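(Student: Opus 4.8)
The plan is to reduce the corollary to a single fact: the calibration map $\alpha \mapsto \lambda(\alpha)$ of \eqref{eq:cali_finite} is \emph{strictly} increasing on $\mathcal{A}(\gamma)$. Granting this, the corollary is essentially formal. Proposition \ref{prop:bijective} already supplies existence together with continuity of $\lambda(\cdot)$ and the boundary values $\lambda(\min\mathcal{A}) = -\infty$ and $\lambda(\max\mathcal{A}) = \lambda_{\max}$. A continuous, strictly increasing function on the interval $\mathcal{A}(\gamma)$ with these limits is a bijection onto $(-\infty,\lambda_{\max})$; hence each $\lambda < \lambda_{\max}$ has a \emph{unique} preimage $\alpha(\lambda)$ (the uniqueness assertion), and the inverse $\lambda \mapsto \alpha(\lambda)$ is automatically continuous, strictly increasing (in particular non-decreasing), and onto $\mathcal{A}(\gamma)$. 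So everything hinges on strict monotonicity of $\lambda(\cdot)$.

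To prove strict monotonicity I would differentiate \eqref{eq:cali_finite}. Writing $\kappa(\alpha) = \tfrac{1}{\delta}\langle \eta_\gamma'(\mathbf{\Pi} + \tau_*\bm Z,\, \alpha\tau_*)\rangle$, the map factors as $\lambda(\alpha) = \alpha\,\tau_*\,(1 - \kappa(\alpha))$. One should \emph{not} expect each factor to be monotone on its own: as $\alpha\downarrow\min\mathcal{A}$ we have $\tau_*\to\infty$ and $\lambda\to-\infty$, whereas as $\alpha\uparrow\max\mathcal{A}$ the SGL estimator vanishes, forcing $\langle\eta_\gamma'\rangle\to 0$, $\kappa\to 0$, and $\tau_*^2$ down to the finite value $\sigma_{\bm w}^2 + \tfrac{1}{\delta}\mathbb{E}[\Pi^2]$. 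Thus $\tau_*$ decreases while $1 - \kappa$ increases, and the content of the claim is that their product, scaled by $\alpha$, rises strictly. This is the genuine obstacle, and it is compounded by the fact that $\tau_* = \tau_*(\alpha)$ is only defined implicitly through the fixed-point equation $\tau_*^2 = \mathbf{F}_\gamma(\tau_*^2, \alpha\tau_*)$.

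I would resolve the implicit dependence via the implicit function theorem applied to $\mathbf{F}_\gamma$. Differentiating the fixed-point relation in $\alpha$ and solving yields $\tfrac{d\tau_*^2}{d\alpha}$ as a ratio whose denominator is $1 - \tfrac{d\mathbf{F}_\gamma}{d\tau^2}$, strictly positive at $\tau_*$ by the contraction bound $\big|\tfrac{d\mathbf{F}_\gamma}{d\tau^2}\big| < 1$ furnished by Proposition \ref{prop:F}. This is the crucial structural input: it certifies the fixed point is attracting and pins down the sign of $d\tau_*/d\alpha$ in terms of the threshold-argument derivative of $\mathbf{F}_\gamma$. Substituting $d\tau_*/d\alpha$ back into $d\lambda/d\alpha$ and simplifying with Gaussian integration by parts (Stein's lemma), which relates the derivatives of $\mathbb{E}\|\eta_\gamma(\mathbf{\Pi}+\tau\bm Z,\theta) - \mathbf{\Pi}\|^2$ in the threshold $\theta$ (the second argument of $\eta_\gamma$) to moments of $\eta_\gamma'$, should collapse $d\lambda/d\alpha$ into a manifestly positive expression for every $\sigma_{\bm w}^2 > 0$.

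The step I expect to fight hardest is that final simplification. After inserting $d\tau_*/d\alpha$, the formula for $d\lambda/d\alpha$ carries several terms of opposing signs, and proving their sum is strictly positive requires matching the Stein-type identities for the \emph{group-coupled} proximal operator \eqref{eq:prox_op}: unlike the separable LASSO shrinkage, $\eta_\gamma$ mixes coordinates within a group through the factor $\bigl(1 - (1-\gamma)\lambda\sqrt{p_{l_j}}/\|\eta_{\text{soft}}(\bm s_{l_j},\gamma\lambda)\|_2\bigr)$, so every integration by parts must be tracked block-by-block and then passed to the limit using the group-proportion assumption (\textbf{A5}). A sensible route is to first verify the scalar LASSO case $\gamma = 1$, where these identities are classical, and then extend group-by-group to general $\gamma$.
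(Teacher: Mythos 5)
Your reduction is sound as far as it goes: existence, continuity of the inverse, and surjectivity onto $\mathcal{A}(\gamma)$ all follow formally from Proposition \ref{prop:bijective} once injectivity of $\alpha\mapsto\lambda(\alpha)$ is in hand (a continuous injection on an interval is strictly monotone, and the boundary values force it to be increasing). But the injectivity itself is the entire content of the corollary, and your proposal does not establish it. You correctly observe that the two factors $\tau_*(\alpha)$ and $1-\kappa(\alpha)$ move in opposite directions, so positivity of $d\lambda/d\alpha$ is a genuine cancellation claim; you then assert that substituting $d\tau_*^2/d\alpha$ from the implicit function theorem and applying Stein-type identities ``should collapse'' the derivative into a manifestly positive expression. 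That final step is precisely where the difficulty lives, it is not carried out, and it is not obviously true for the group-coupled operator $\eta_\gamma$ (nor, for that matter, is the direct derivative computation the standard route even in the separable LASSO case). As written, the proof has a hole exactly at its load-bearing step.

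The paper avoids this computation entirely. It argues by contradiction: if $\lambda(\hat\alpha_1)=\lambda(\hat\alpha_2)=\lambda$, then the SGL minimizer $\hat{\bm\beta}$ is the same object in both cases (it depends only on $\lambda$), so Theorem \ref{theorem1} applied with $\varphi(x,y)=(x-y)^2$ forces $\delta(\tau_*(\hat\alpha_1)^2-\sigma_{\bm w}^2)=\delta(\tau_*(\hat\alpha_2)^2-\sigma_{\bm w}^2)$, i.e.\ $\tau_*(\hat\alpha_1)=\tau_*(\hat\alpha_2)$; applying it again with $\varphi(x,y)=|x|$ and using that $\theta\mapsto\mathbb{E}|\eta_\gamma(\Pi+\tau_* Z,\theta)|$ is strictly decreasing for fixed $\tau_*$ forces $\hat\alpha_1\tau_*(\hat\alpha_1)=\hat\alpha_2\tau_*(\hat\alpha_2)$, hence $\hat\alpha_1=\hat\alpha_2$. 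This trades your analytic estimate for two easy applications of the characterization theorem plus an elementary monotonicity of a scalar function in its threshold argument. If you want to salvage your route, you would need to actually prove the positivity of $d\lambda/d\alpha$; absent that, I recommend the contradiction argument.
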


The proofs of these statements are left in Appendix \ref{app:cali}.

\subsection{AMP Characterizes SGL Estimate Under Pseudo-Lipschitz Functions}

Having described the state evolution, we now state our main theoretical results. We establish an asymptotic equality between $\bm {\hat\beta}$ and $\eta_\gamma$ in pseudo-Lipschitz norm, which allows the fine-grained statistical analysis of the SGL minimizer.

\begin{definition}[c.f. \cite{berthier2017state}]
	\label{lipschitz}
	For $k\in \mathbb{N}_{+}$, a function $\phi : \mathbb{R}^{d} \to \mathbb{R}$ is \textbf{pseudo-Lipschitz} of order $k$, if there exists a constant $L$ such that for $\mathbf{a}, \mathbf{b} \in \mathbb{R}^{d}$,
	\begin{equation}
	\begin{split}
	   & |\phi(\bm a)-\phi(\bm b)| \leq L\Big(1+ \left(\frac{\|\bm a\|}{\sqrt{d}}\right)^{k-1}+ \left(\frac{\|\bm b\|}{\sqrt{d}}\right)^{k-1}\Big) \left(\frac{\|\bm a - \bm b\|}{\sqrt{d}}\right).
	\end{split}
	\end{equation}
	A sequence (in $p$) of pseudo-Lipschitz functions $\{\phi_{p}\}_{p\in \mathbb{N}_{+}}$
	is \textbf{uniformly pseudo-Lipschitz}
	of order $k$ if, denoting by $L_{p}$ the pseudo-Lipschitz constant of $\phi_{p}$, $L_{p} < \infty$ for each $p$ and $\lim\sup_{p\to\infty}L_{p} < \infty$.
\end{definition}
\begin{theorem}\label{theorem1}
 Under the assumptions (\textbf{A1})-(\textbf{A5}), for any uniformly pseudo-Lipschitz sequence of functions $\varphi_p :  \mathbb{R}^p \times \mathbb{R}^p \rightarrow \mathbb{R}$ and for $\mathbf{Z} \sim \mathcal{N}(0, \mathcal{I}_p), \mathbf{\Pi} \sim p_{\Pi}$,
 \begin{align*}
    \lim_{p \rightarrow \infty}
 \varphi_p(\bm{\hat\beta}, \bm \beta) =  \lim_{t\to\infty} \lim_{p\to\infty} \mathbb{E}[\varphi_p (\eta_{\gamma}(\mathbf{\Pi} + \tau_t \bm Z; \alpha \tau_t), \mathbf{\Pi} )].
 \end{align*}
\end{theorem}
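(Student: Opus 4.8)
The plan is to split the statement into two separate limits and then glue them with the pseudo-Lipschitz continuity of $\varphi_p$. Concretely, I would first establish the finite-$t$ state evolution characterization, namely that for each fixed iteration $t$,
\[
\lim_{p\to\infty}\varphi_p(\bm\beta^{t}, \bm\beta) = \lim_{p\to\infty}\mathbb{E}\big[\varphi_p(\eta_\gamma(\mathbf{\Pi}+\tau_{t-1}\bm Z;\alpha\tau_{t-1}),\mathbf{\Pi})\big],
\]
and then show that the AMP trajectory converges to the SGL minimizer in the sense $\lim_{t\to\infty}\lim_{p\to\infty}\frac{1}{p}\|\bm\beta^{t}-\hat{\bm\beta}\|_2^2 = 0$. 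Given these two facts the theorem follows: the pseudo-Lipschitz bound in \Cref{lipschitz} controls $|\varphi_p(\bm\beta^t,\bm\beta)-\varphi_p(\hat{\bm\beta},\bm\beta)|$ by a constant multiple of $\|\bm\beta^t-\hat{\bm\beta}\|_2/\sqrt{p}$, up to a factor that stays bounded because the rescaled second moments of $\bm\beta^t$ and $\hat{\bm\beta}$ are controlled. Sending $p\to\infty$ and then $t\to\infty$ therefore transfers the characterization from $\bm\beta^t$ to $\hat{\bm\beta}$, while $\tau_{t}\to\tau_*$ from \Cref{prop:F} identifies the threshold in the limit.

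For the first limit I would invoke the non-separable state evolution theorem of \cite{berthier2017state}. The only hypotheses to verify are that the SGL denoiser $\eta_\gamma(\cdot;\alpha\tau_t)$ is (uniformly in $p$) Lipschitz and that the empirical distributions of the signal and noise converge as required by (\textbf{A2})--(\textbf{A5}). Lipschitz continuity of $\eta_\gamma$ follows from its definition as a proximal map of a convex function, which is firmly nonexpansive; moreover the grouping structure in (\textbf{A5}) means $\eta_\gamma$ acts blockwise with each block a $1$-Lipschitz map, so the whole operator is $1$-Lipschitz with a constant independent of $p$. With these checks in place, \cite{berthier2017state} yields the displayed finite-$t$ identity with $\tau_t$ governed by the recursion \eqref{state evolution}.

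The harder half is the convergence of the iterates to the minimizer, for which I would follow the template of the LASSO and SLOPE analyses (\cite{bayati2011lasso, bu2019algorithmic}). A stationary point of \eqref{recursion 5}--\eqref{recursion 6} is an SGL minimizer by \Cref{lemma:3.2} under the calibration $\theta_*=\alpha\tau_*$, so it suffices to show that the AMP residual becomes a vanishing subgradient and then convert this optimality gap into a distance bound. \textbf{The main obstacle} is that the cost $\mathcal{C}_{\bm X,\bm y}$ is convex but \emph{not} strictly convex, so the subgradient condition alone does not pin down $\hat{\bm\beta}$. I would circumvent this exactly as in the non-separable AMP literature: run the state-evolution bookkeeping on $\frac{1}{p}\|\bm\beta^{t+1}-\hat{\bm\beta}\|_2^2$, expressing it through the AMP parameters, and show that along the trajectory this quantity is contracted up to an error that vanishes as $\tau_t\to\tau_*$. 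The derivative bound $|d\mathbf{F}_\gamma/d\tau^2|<1$ at $\tau_*$ from \Cref{prop:F} supplies the contraction factor driving $\lim_{t\to\infty}\lim_{p\to\infty}\frac{1}{p}\|\bm\beta^t-\hat{\bm\beta}\|_2^2 = 0$, while the non-strict convexity is handled by passing to the almost-sure limit, where the Gaussian-plus-signal law is non-degenerate and the limiting fixed point is unique even though the finite-$p$ minimizer need not be.

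Finally I would assemble the pieces, using continuity of $\eta_\gamma$ and of the state-evolution map in $\tau$ to replace $\tau_{t-1}$ by $\tau_*$ as $t\to\infty$, while the distance bound lets $\varphi_p(\bm\beta^t,\bm\beta)$ approximate $\varphi_p(\hat{\bm\beta},\bm\beta)$. The delicate point throughout is the legality of interchanging $p\to\infty$ and $t\to\infty$, which is precisely why the statement uses the iterated limit $\lim_{t\to\infty}\lim_{p\to\infty}$ rather than a joint one; I would always hold $t$ fixed while sending $p\to\infty$, and only then let $t\to\infty$, relying on the uniform (in $p$) pseudo-Lipschitz constants from \Cref{lipschitz} to justify the two-stage passage.
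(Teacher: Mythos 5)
Your proposal follows essentially the same road map as the paper's proof: establish the finite-$t$ state evolution characterization via the non-separable AMP result of \cite{berthier2017state} (after verifying the uniform Lipschitz and limit-existence conditions on $\eta_\gamma$), prove convergence of the AMP iterates to the SGL minimizer along the lines of \cite{bayati2011lasso} despite the lack of strict convexity, and glue the two with the uniform pseudo-Lipschitz bound while controlling the rescaled norms of $\bm\beta^t$, $\hat{\bm\beta}$, and $\bm\beta$. The approach and the order of limits are the same as in the paper, so no further comparison is needed.
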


The proof of \Cref{theorem1} is left in Appendix \ref{Appendix C}. Essentially, up to a uniformly pseudo-Lipschitz loss, we can replace $\bm{\hat\beta}$ by $\eta_\gamma$ in the large system limit. The distribution of $\eta_\gamma$ is \textit{explicit}, thus allowing the analysis of certain statistical quantities in an exact manner. Specifically, if we use $\varphi_p(\bm a,\bm b) = \frac{1}{p}\|\bm a - \bm b\|_2^2$, the MSE of estimation between $\bm{\hat\beta}$ and $\bm\beta$ can be characterized by $\tau_*$.
\begin{corollary} 
\label{cor:mse}
Under the assumptions (\textbf{A1})-(\textbf{A5}), then almost surely
\begin{equation*}
    \lim_{p \rightarrow \infty} \frac{1}{p} \|  \bm{\hat\beta}-\bm \beta\|_2^2 = \delta(\tau_*^2-\sigma_{\bm w}^2)
\end{equation*}
\end{corollary}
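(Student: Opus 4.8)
The plan is to obtain this as a direct corollary of \Cref{theorem1} by specializing to the squared-error loss and then reading off the right-hand side through the state evolution recursion \eqref{state evolution}.

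First I would set $\varphi_p(\bm a,\bm b) = \frac{1}{p}\|\bm a-\bm b\|_2^2$ and verify that $\{\varphi_p\}$ is a uniformly pseudo-Lipschitz sequence of order $k=2$ in the sense of \Cref{lipschitz}, viewing the pair $(\bm a,\bm b)$ as a single vector in $\mathbb{R}^{2p}$. Writing $\bm u=\bm a-\bm b$ and $\bm u'=\bm a'-\bm b'$, the factorization $\big|\|\bm u\|^2-\|\bm u'\|^2\big| = \big|\|\bm u\|-\|\bm u'\|\big|\,(\|\bm u\|+\|\bm u'\|)$ combined with the elementary bounds $\big|\|\bm u\|-\|\bm u'\|\big|\le\sqrt2\,\|(\bm a,\bm b)-(\bm a',\bm b')\|$ and $\|\bm u\|+\|\bm u'\|\le\sqrt2\big(\|(\bm a,\bm b)\|+\|(\bm a',\bm b')\|\big)$ yields the pseudo-Lipschitz inequality with a constant independent of $p$ (one may take $L=4$ after accounting for the $\sqrt{2p}$ normalization). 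Hence $\limsup_p L_p<\infty$ and $\varphi_p$ is admissible in \Cref{theorem1}.

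Applying \Cref{theorem1} with this $\varphi_p$ gives, almost surely,
\begin{equation*}
\lim_{p\to\infty}\frac1p\|\bm{\hat\beta}-\bm\beta\|_2^2 = \lim_{t\to\infty}\lim_{p\to\infty}\frac1p\,\mathbb{E}\big\|\eta_\gamma(\mathbf{\Pi}+\tau_t\bm Z;\alpha\tau_t)-\mathbf{\Pi}\big\|_2^2 .
\end{equation*}
Next I would identify the inner double limit using the state evolution recursion \eqref{state evolution}, which states precisely that $\lim_{p\to\infty}\frac1{\delta p}\mathbb{E}\|\eta_\gamma(\mathbf{\Pi}+\tau_t\bm Z,\alpha\tau_t)-\mathbf{\Pi}\|_2^2 = \tau_{t+1}^2-\sigma_{\bm w}^2$. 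Substituting, the $p$-limit of the bracketed quantity equals $\delta(\tau_{t+1}^2-\sigma_{\bm w}^2)$.

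Finally I would take $t\to\infty$. By \Cref{prop:F}, for the calibrated $\alpha\in\mathcal{A}(\gamma)$ the sequence $\tau_t$ converges monotonically to the unique fixed point $\tau_*(\alpha)$, so $\tau_{t+1}^2\to\tau_*^2$ and hence $\delta(\tau_{t+1}^2-\sigma_{\bm w}^2)\to\delta(\tau_*^2-\sigma_{\bm w}^2)$, the claimed value. The only genuinely delicate points are (i) confirming the uniform-in-$p$ pseudo-Lipschitz constant so that \Cref{theorem1} legitimately covers the quadratic loss — the order-$2$ growth is exactly what the squared norm requires, but one must track the $\sqrt{2p}$ scaling carefully — and (ii) the mode of convergence. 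Since \Cref{theorem1} delivers its conclusion almost surely and the remaining manipulations are deterministic identities in $\tau_t$ and $\tau_*$, the almost-sure statement follows with no further interchange-of-limits argument beyond what is already packaged into \Cref{theorem1} and \Cref{prop:F}.
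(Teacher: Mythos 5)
Your proposal is correct and follows essentially the same route as the paper: apply \Cref{theorem1} with the squared-error test function $\varphi_p(\bm a,\bm b)=\frac1p\|\bm a-\bm b\|_2^2$, identify the resulting double limit via the state evolution recursion \eqref{state evolution}, and pass to the fixed point $\tau_*$ using \Cref{prop:F}. Your explicit verification of the order-$2$ uniform pseudo-Lipschitz property and your care with the $1/p$ normalization are details the paper glosses over, but they do not change the argument.
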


\begin{proof}[Proof of Corollary \ref{cor:mse}]
Applying Theorem \ref{theorem1} to the pseudo-Lipschitz loss function and letting $\varphi_p (\bm a,\bm b) = \|\bm a - \bm b \|_2^2$, we obtain
\begin{equation*}
    \lim_{p \rightarrow \infty} \|  \hat{\bm \beta} - \bm \beta \|_2^2= \lim_{t \rightarrow \infty} \mathbb{E}\left[\varphi_p (\eta_{\gamma}(\mathbf{\Pi} + \tau_t \bm Z; \alpha \tau_t), \mathbf{\Pi} )\right].
\end{equation*}
The result follows from the state evolution \eqref{state evolution} since 
\begin{align*}
    \lim_{t \rightarrow \infty} \mathbb{E}\left[\varphi_p (\eta_{\gamma}(\mathrm{\Pi} + \tau_t Z; \alpha \tau_t), \mathrm{\Pi} )   \right] = \delta(\tau_*^2-\sigma_{\bm w}^2).
\end{align*}
\end{proof}

Now that we have demonstrated the usefulness of our main theoretical result, we prove Theorem \ref{theorem1} at a high level, following the road map below where the equivalence is in the limiting sense.
\begin{center}
\fbox{$\bm{\hat\beta}\overset{\text{Theorem 2}}{=\joinrel=\joinrel=}\bm\beta^t\overset{\text{Lemma 3.8}}{=\joinrel=\joinrel=}\eta_{\gamma}(\mathbf{\Pi} + \tau_t \bm Z;\alpha\tau_t)$}
\end{center}

We first show the convergence of $\bm\beta^t$ to $\bm {\hat\beta}$, i.e. the AMP iterates converge to the true minimizer.


\begin{theorem}\label{theorem2} 
Under assumptions (\textbf{A1})-(\textbf{A5}), for the output of the AMP algorithm in (\ref{recursion 5}) and the Sparse Group LASSO estimator given by the solution of (\ref{objective}),
  \vspace{-0.3cm}
 \begin{equation*}
   \lim_{p \rightarrow \infty} \frac{1}{p} 
 \|\bm {\hat \beta} - \bm \beta^t \|_2^2 = k_t, \quad where \lim_{t \rightarrow \infty} k_t = 0
 \vspace{-0.2cm}
 \end{equation*}
\end{theorem}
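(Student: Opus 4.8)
The plan is to adapt the strategy of Bayati and Montanari for the LASSO \cite{bayati2011lasso} and its non-separable counterpart for SLOPE \cite{bu2019algorithmic}, combining the state-evolution control of the iterates with a convexity argument that turns closeness of cost values into closeness of parameters. The two ingredients are (i) that $\bm\beta^t$ becomes an approximate minimizer of the SGL cost as $t\to\infty$, and (ii) a strong-convexity-type lower bound converting a vanishing cost gap into a vanishing normalized $\ell_2$ distance.

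For ingredient (ii) I would first record the convex-analytic inequality coming from optimality of $\hat{\bm\beta}$. Writing $\mathcal{C}(\bm\beta)=\tfrac12\|\mathbf{y}-\mathbf{X}\bm\beta\|^2+P(\bm\beta)$ with $P$ the convex SGL penalty, the stationarity relation $\mathbf{X}^\top(\mathbf{y}-\mathbf{X}\hat{\bm\beta})\in\partial P(\hat{\bm\beta})$ and convexity of $P$ give, after the cross terms cancel,
\begin{equation*}
\mathcal{C}(\bm\beta^t)-\mathcal{C}(\hat{\bm\beta})\ \geq\ \tfrac12\,\|\mathbf{X}(\bm\beta^t-\hat{\bm\beta})\|^2 .
\end{equation*}
Since $p>n$ and $P$ is not strictly convex, this does not yet control $\|\bm\beta^t-\hat{\bm\beta}\|$. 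I would therefore pass to the regularized cost $\mathcal{C}_\xi(\bm\beta)=\mathcal{C}(\bm\beta)+\tfrac\xi2\|\bm\beta\|^2$, which is $\xi$-strongly convex with unique minimizer $\hat{\bm\beta}_\xi$; the same cancellation then yields $\mathcal{C}_\xi(\bm\beta^t)-\mathcal{C}_\xi(\hat{\bm\beta}_\xi)\geq\tfrac\xi2\|\bm\beta^t-\hat{\bm\beta}_\xi\|^2$, which bounds the normalized distance directly by the (regularized) cost gap.

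For ingredient (i) I would invoke Proposition \ref{prop:F} and the non-separable state-evolution analysis of \cite{berthier2017state}. Because $\tau_t\to\tau_*$ monotonically, the limiting joint law of $(\bm\beta^t,\bm\beta^{t+1})$ stabilizes, so $\tfrac1p\|\bm\beta^{t+1}-\bm\beta^t\|^2$ converges to a quantity vanishing as $t\to\infty$ and the iterates are asymptotically Cauchy; moreover the recursions \eqref{recursion 5}, \eqref{recursion 6} show that the limiting point asymptotically satisfies the SGL stationarity conditions, which by Lemma \ref{lemma:3.2} makes it a minimizer at the calibrated $\lambda$. Hence the cost gap $\tfrac1p[\mathcal{C}(\bm\beta^t)-\mathcal{C}(\hat{\bm\beta})]\to0$. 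Feeding this into the strong-convexity bound, letting $\xi\to0$, and verifying that $\hat{\bm\beta}_\xi\to\hat{\bm\beta}$ and that the calibrated fixed point depends continuously on $\xi$, one obtains a sequence $k_t$ with $\lim_{p\to\infty}\tfrac1p\|\bm\beta^t-\hat{\bm\beta}\|^2=k_t$ and $k_t\to0$.

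The main obstacle is precisely the lack of strict convexity stressed in the introduction: in the regime $p>n$ the quadratic part of $\mathcal{C}$ is degenerate, so $\mathbf{X}\bm\beta^t\approx\mathbf{X}\hat{\bm\beta}$ does not force $\bm\beta^t\approx\hat{\bm\beta}$. The perturbation $\xi$ sidesteps this, but the cost is a delicate interchange of the three limits $p\to\infty$, $t\to\infty$, and $\xi\to0$: one must check that the state evolution and calibration of the $\xi$-regularized problem converge to those of the original problem and that all estimates are uniform in $p$, and also invoke almost-sure uniqueness of the Gaussian-design minimizer so that the target $\hat{\bm\beta}$ is unambiguous. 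A secondary subtlety is that, because the SGL penalty is non-separable, the state-evolution input must be the non-separable version of \cite{berthier2017state} rather than the scalar LASSO recursion.
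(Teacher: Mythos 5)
Your overall architecture (approximate optimality of $\bm\beta^t$ plus a convexity mechanism converting a small cost gap into a small parameter distance) matches the intent of the paper's proof, but the mechanism you choose for the second step is different from the paper's and, as written, does not close. The paper follows Lemma 3.1 of \cite{bayati2011lasso}: it exhibits a subgradient of the SGL cost at $\bm\beta^t$ of norm $o(\sqrt p)$, argues via the structure of the proximal operator \eqref{eta} (a ``scaled soft-thresholding'', with specifically designed subgradients of $\|\cdot\|_2$) that the weighted size of the active set is at most $n(1-c)$, and then uses the fact that Gaussian submatrices with fewer than $n$ columns have least singular value bounded below to obtain a restricted strong-convexity inequality on the relevant cone. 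That restricted-eigenvalue step is what substitutes for the missing strict convexity.

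Your ridge perturbation does not supply this. Fix $\xi>0$ and write $r=\bm\beta^t-\hat{\bm\beta}_\xi$. Strong convexity gives $\tfrac{\xi}{2}\|r\|^2\le \mathcal{C}_\xi(\bm\beta^t)-\mathcal{C}_\xi(\hat{\bm\beta}_\xi)$, and decomposing the right-hand side as $[\mathcal{C}(\bm\beta^t)-\mathcal{C}(\hat{\bm\beta})]+[\mathcal{C}(\hat{\bm\beta})-\mathcal{C}(\hat{\bm\beta}_\xi)]+\tfrac{\xi}{2}\left[\|\bm\beta^t\|^2-\|\hat{\bm\beta}_\xi\|^2\right]$, the middle bracket is nonpositive, but the last one, after multiplying through by $2/(\xi p)$, contributes $\tfrac1p\left[\|\bm\beta^t\|^2-\|\hat{\bm\beta}_\xi\|^2\right]$, an $O(1)$ quantity with no reason to vanish; the resulting inequality $u^2\le \epsilon_t + Cu$ for $u=\|r\|/\sqrt p$ yields only boundedness, not smallness. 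Even granting that step for fixed $\xi$, you would still need $\tfrac1p\|\hat{\bm\beta}_\xi-\hat{\bm\beta}\|^2\to0$ as $\xi\to0$ uniformly in $p$, which is a statement of exactly the same difficulty as the theorem itself: minimizers of a degenerate convex function in dimension $p>n$ need not be stable under vanishing perturbations at any rate uniform in $p$. So the ``delicate interchange of limits'' you flag is not a technicality to be checked later but the actual content of the theorem, and the support-size/restricted-singular-value argument of \cite{bayati2011lasso} (adapted to the grouped penalty) is the ingredient missing from your proposal.
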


The proof is similar to the proof of \cite[Theorem 1.8]{bayati2011lasso}. The difference is incurred by the existence of the $\ell_2$ norm which imposes the group structure. We leave the proof in Appendix \ref{Appendix C}. In addition to Theorem \ref{theorem2}, we borrow the state evolution analysis from \cite{berthier2017state} to complete the proof of Theorem \ref{theorem1}, whose proof can be found in \Cref{proof 38} as well.

\begin{lemma}[\cite{berthier2017state},Theorem 14]
\label{berthier}
Under assumptions (\textbf{A1}) - (\textbf{A5}), given that \textbf{(S1)} and \textbf{(S2)} in \Cref{Appendix C} are satisfied, consider the recursion \eqref{recursion 5} and \eqref{recursion 6}. For any uniformly pseudo-Lipschitz sequence of functions $\phi_n: \mathbb{R}^{n} \times \mathbb{R}^n \rightarrow \mathbb{R}$ and $\varphi_p: \mathbb{R}^{p} \times \mathbb{R}^p \rightarrow \mathbb{R}$, 
\begin{align}
    \phi_n (\mathbf{z}^t, \mathbf{w}) \overset{\mathbb{P}}{\rightarrow} \mathbb{E}\left[ \phi_n (\mathbf{w} + \sqrt{ \tau_t^2 - \sigma_w^2} \mathbf{Z}', \mathbf{w})\right] \\
    \varphi_p (\bm \beta^{t} + \mathbf{X}^\top \mathbf{z}^t, \Pi ) \overset{\mathbb{P}}{\rightarrow} \mathbb{E} \left[\varphi_p (\Pi + \tau_t \mathbf{Z}, \Pi )  \right]
\end{align}
where $\tau_t$ is defined in \eqref{state evolution}, $\mathbf{Z}'\sim \mathcal{N}(\mathbf{0}, \mathcal{I}_n)$ and $\mathbf{Z}\sim \mathcal{N}(\mathbf{0}, \mathcal{I}_p)$.
\end{lemma}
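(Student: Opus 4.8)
The plan is to obtain both displayed convergences as the specialization to the SGL recursion of the general non-separable state evolution theorem of \cite{berthier2017state}; concretely, I would cast \eqref{recursion 5}--\eqref{recursion 6} into the abstract AMP iteration that theorem analyzes, confirm that its hypotheses are met, and then read the two limiting Gaussian laws off the resulting state-evolution description.

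First I would rewrite the recursion in the standard two-vector form. Substituting $\mathbf{y} = \mathbf{X}\bm\beta + \mathbf{w}$ and writing $f_t(\cdot) := \eta_{\gamma,\bm g}(\cdot,\alpha\tau_t)$, the update \eqref{recursion 5} becomes $\bm\beta^{t+1} = f_t(\mathbf{X}^\top\mathbf{z}^t + \bm\beta^t)$, while \eqref{recursion 6} expresses $\mathbf{z}^{t+1}$ as the residual $\mathbf{w} + \mathbf{X}(\bm\beta - \bm\beta^{t+1})$ plus the Onsager correction $\tfrac1\delta \mathbf{z}^t \langle\eta'_{\gamma,\bm g}(\mathbf{X}^\top\mathbf{z}^t + \bm\beta^t,\alpha\tau_t)\rangle$. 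A key bookkeeping step is to verify that $\langle\eta'_{\gamma,\bm g}\rangle = \tfrac1p\,\mathrm{div}\,f_t$ is exactly the normalized divergence that serves as the Onsager coefficient in \cite{berthier2017state}; this identification is what decouples the effective noise and makes it asymptotically Gaussian with the variance prescribed by \eqref{state evolution}.

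Next, the hypotheses of the cited theorem are precisely a uniform Lipschitz condition on the sequence of denoisers together with a convergence condition for the associated state-evolution map; these are what \textbf{(S1)} and \textbf{(S2)} supply, and I would take them as given here (their verification being deferred to \Cref{Appendix C}). Together with the Gaussian, independent design (\textbf{A1}), the i.i.d. prior and noise (\textbf{A2})--(\textbf{A3}), the aspect ratio (\textbf{A4}), and the convergent group ratios (\textbf{A5}), these furnish every ingredient the theorem requires, so I would invoke it directly. It yields, for every uniformly pseudo-Lipschitz test sequence, convergence in probability of the empirical averages to expectations under a jointly Gaussian surrogate whose per-coordinate variance is tracked by the scalar recursion \eqref{state evolution}. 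Reading off the two relevant coordinates then gives the stated limits: the pseudo-data $\bm\beta^t + \mathbf{X}^\top\mathbf{z}^t$ is characterized as $\Pi + \tau_t\mathbf{Z}$, and because the residual accumulates variance $\sigma_{\bm w}^2 + (\tau_t^2 - \sigma_{\bm w}^2) = \tau_t^2$, the residual $\mathbf{z}^t$ is characterized as $\mathbf{w} + \sqrt{\tau_t^2 - \sigma_{\bm w}^2}\,\mathbf{Z}'$.

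The hard part will be the content behind \textbf{(S2)}: since \textbf{(A5)} forces each group to occupy a macroscopic fraction $r_l$ of the coordinates, $\eta_{\gamma,\bm g}$ is genuinely non-separable, coupling $\Theta(p)$ coordinates through the group norms. I would handle this by showing that the group-shrinkage factor $1 - (1-\gamma)\alpha\tau_t\sqrt{p_{l_j}}/\|\eta_{\text{soft}}(\bm s_{l_j},\gamma\alpha\tau_t)\|_2$ concentrates: under \textbf{(A2)} and \textbf{(A5)} the quantity $\tfrac1{p_l}\|\eta_{\text{soft}}(\bm s_l,\gamma\alpha\tau_t)\|_2^2$ is an average of i.i.d. terms converging to a deterministic limit, so the within-group coupling reduces, in the large-$p$ limit, to an effectively separable deterministic rescaling. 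This is what makes both the limiting state-evolution map and the Onsager coefficient $\langle\eta'_{\gamma,\bm g}\rangle$ well defined and continuous, and hence what legitimizes the appeal to \cite{berthier2017state}. The two-branch non-differentiability of the proximal operator in \eqref{eq:prox_op} at the group boundary must also be addressed, but it affects only a set of Gaussian inputs of vanishing measure and therefore does not perturb the pseudo-Lipschitz limits.
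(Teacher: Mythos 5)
Your proposal matches the paper's treatment: the lemma is imported directly from \cite{berthier2017state} after casting \eqref{recursion 5}--\eqref{recursion 6} in the standard AMP form, and the only SGL-specific work is verifying \textbf{(S1)} and \textbf{(S2)}, which the paper also does by showing the group-shrinkage factor concentrates so that $\eta_{\gamma,\bm g}$ is asymptotically separable (its Lemma \ref{C2}, combined with a Doob maximal-inequality domination argument). Your sketch identifies the same key difficulty and resolves it the same way, so it is essentially the paper's approach.
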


In summary, to see that Theorem 1 holds, 
we obtain that $\bm \beta^t+\bm X^\top\bm z^t\approx\mathbf{\Pi}+\tau_t\bm Z$ from Lemma \ref{berthier}. Together with $\bm{\beta}^{t+1} = \eta_{\gamma}(\mathbf{X}^\top \mathbf{z}^t + \bm{\beta}^t, \alpha \tau_t)$ from \eqref{recursion 5}, we have $\bm\beta^t\approx\eta_{\gamma}(\mathbf{\Pi}+\tau_t\bm Z,\alpha\tau_t)$. Finally Theorem \ref{theorem2} and  to obtain that $\bm {\hat\beta}=\eta_\gamma(\bm \beta^{t} + \mathbf{X}^\top \mathbf{z}^t, \alpha \tau_t)$ within uniformly pseudo-Lipschitz loss, in the large system limit.

\subsection{Beyond Pseudo-Lipschitz Functions: TPP and FDP of SGL}
While Theorem \ref{theorem1} only works under the pseudo-Lipschitz functions, which include the MSE, we can show that the result extends to important non-pseudo-Lipschitz functions such as TPP and FDP.

\begin{corollary}
Under assumptions (\textbf{A1}) - (\textbf{A5}), denote $\Pi^*=\Pi|\Pi\neq 0$ as the distribution of non-zero signals, the Sparse Group LASSO estimator satisfies
$$
\begin{aligned}
\operatorname{TPP}(\boldsymbol{\beta}, \boldsymbol{\lambda}):=\frac{\left|\left\{j: \widehat{\beta}_{j} \neq 0, \beta_{j} \neq 0\right\}\right|}{\left|\left\{j: \beta_{j} \neq 0\right\}\right|}
\stackrel{P}{\rightarrow} &\operatorname{TPP}^{\infty}(\Pi, \Lambda):=\mathbb{P} (|\Pi^* + \tau_* Z| > \gamma \alpha \tau_*),
\\
\operatorname{FDP}(\boldsymbol{\beta}, \boldsymbol{\lambda}):=\frac{\left|\left\{j: \widehat{\beta}_{j} \neq 0, \beta_{j}=0\right\}\right|}{\left|\left\{j: \widehat{\beta}_{j} \neq 0\right\}\right|}
\stackrel{P}{\rightarrow}& \operatorname{FDP}^{\infty}(\Pi, \Lambda):= \mathbb{P}\left(\Pi=0\Big||\Pi + \tau_* Z| > \gamma \alpha \tau_*\right)
\\
=&\frac{2(1-\mathbb{P}(\Pi\neq 0))\Phi(-\gamma \alpha)}{2(1-\mathbb{P}(\Pi\neq 0))\Phi(-\gamma \alpha) + \mathbb{P}(\Pi\neq 0)\operatorname{TPP}^{\infty}(\Pi, \Lambda)},
\end{aligned}
$$
where the convergence is in probability and $(\tau_*,\alpha)$ is the unique solution to the state evolution \eqref{state evolution} and calibration \eqref{calibration}.
\end{corollary}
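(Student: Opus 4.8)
The plan is to write both TPP and FDP as ratios of empirical averages of indicators of $(\hat\beta_j,\beta_j)$ and to compute the limits of numerators and denominators separately, the essential difficulty being that these indicators are not pseudo-Lipschitz, so \Cref{theorem1} does not apply verbatim. First I would simplify the selection event $\{\hat\beta_j\neq 0\}$. Via the road map $\hat\beta\approx\eta_\gamma(\mathbf{\Pi}+\tau_*\bm Z,\alpha\tau_*)$, the $j$-th coordinate of the estimator is nonzero iff both the group gate $\|\eta_{\text{soft}}(\bm s_{l_j},\gamma\alpha\tau_*)\|_2>(1-\gamma)\alpha\tau_*\sqrt{p_{l_j}}$ and the individual gate $|s^{(j)}|>\gamma\alpha\tau_*$ fire, where $\bm s=\mathbf{\Pi}+\tau_*\bm Z$. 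The key observation is that \textbf{(A5)} forces every group size $p_l\to\infty$, so by the law of large numbers $\tfrac{1}{p_l}\|\eta_{\text{soft}}(\bm s_l,\gamma\alpha\tau_*)\|_2^2\to\mathbb{E}[\eta_{\text{soft}}(\Pi+\tau_*Z,\gamma\alpha\tau_*)^2]$, a limit independent of $l$ since the signal coordinates are i.i.d.\ across groups. Hence the group gate degenerates to a single deterministic inequality that holds for every group precisely in the nondegenerate regime $\lambda<\lambda_{\max}$ where $\hat\beta\neq 0$ (were it to fail, every group would be zeroed out, contradicting $\hat\beta\neq 0$). Asymptotically, therefore, $\mathbf{1}\{\hat\beta_j\neq 0\}=\mathbf{1}\{|(\mathbf{\Pi}+\tau_*\bm Z)_j|>\gamma\alpha\tau_*\}$, a function of the \emph{continuous} effective observation.

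To cope with the discontinuity I would split each empirical average according to the true support---null coordinates $\{j:\beta_j=0\}$ and non-null coordinates $\{j:\beta_j\neq 0\}$---whose proportions converge to $1-\mathbb{P}(\Pi\neq 0)$ and $\mathbb{P}(\Pi\neq 0)$ by the law of large numbers. On each piece the effective observation is explicit, namely $\tau_*Z$ on null coordinates and $\Pi^*+\tau_*Z$ on non-null ones, so the only remaining discontinuous factor is $\mathbf{1}\{|u|>\gamma\alpha\tau_*\}$, a function of a continuous variable. I then sandwich this indicator between bounded Lipschitz (hence pseudo-Lipschitz) functions $\phi^-_\epsilon\le\mathbf{1}\{|u|>\gamma\alpha\tau_*\}\le\phi^+_\epsilon$ that agree off an $\epsilon$-neighborhood of $\{|u|=\gamma\alpha\tau_*\}$, apply \Cref{theorem1} together with \Cref{berthier} to the smooth bounds, and send $\epsilon\to 0$. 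Because $\Pi+\tau_*Z$ has a density, the boundary $\{|u|=\gamma\alpha\tau_*\}$ carries no mass and the two bounds collapse to the same limit.

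Finally I would assemble the limits. For TPP the denominator $\tfrac{1}{p}\sum_j\mathbf{1}\{\beta_j\neq 0\}\to\mathbb{P}(\Pi\neq 0)$ and the numerator $\to\mathbb{P}(\Pi\neq 0,\,|\Pi+\tau_*Z|>\gamma\alpha\tau_*)$, whose ratio is exactly $\mathbb{P}(|\Pi^*+\tau_*Z|>\gamma\alpha\tau_*)$. For FDP the numerator $\to\mathbb{P}(\Pi=0,\,|\tau_*Z|>\gamma\alpha\tau_*)=(1-\mathbb{P}(\Pi\neq 0))\,\mathbb{P}(|Z|>\gamma\alpha)=2(1-\mathbb{P}(\Pi\neq 0))\Phi(-\gamma\alpha)$, while the denominator splits by total probability into the null and non-null contributions, yielding the displayed closed form. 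Convergence in probability of each ratio then follows from Slutsky's theorem, both denominators having strictly positive limits ($\mathbb{P}(\Pi\neq 0)>0$ and the Gaussian tail keeps the FDP denominator positive). I expect the degeneration of the group gate and the near-threshold sandwiching to be the only delicate points; the residual Gaussian integrals, in particular $\mathbb{P}(|Z|>\gamma\alpha)=2\Phi(-\gamma\alpha)$, are routine.
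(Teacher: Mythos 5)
The paper does not actually spell out a proof of this corollary; it simply remarks that the argument is ``extremely similar to the LASSO case'' of \cite[Theorem B.1]{bogdan2013statistical}. Your outline is the right reconstruction of that route: split the empirical averages by the true support, sandwich the discontinuous indicator $\mathbf{1}\{|u|>\gamma\alpha\tau_*\}$ between bounded Lipschitz functions, invoke \Cref{theorem1} and \Cref{berthier} on the smoothed versions, and use absolute continuity of $\Pi+\tau_* Z$ to collapse the bounds. Your treatment of the group gate is moreover a genuinely SGL-specific ingredient that the paper's one-line remark leaves implicit: under (\textbf{A5}) each $p_l\to\infty$, so $\tfrac{1}{p_l}\|\eta_{\text{soft}}(\bm s_l,\gamma\alpha\tau_*)\|_2^2$ concentrates on $\mathbb{E}[\eta_{\text{soft}}(\Pi+\tau_* Z,\gamma\alpha\tau_*)^2]$ and the $\ell_2$ gate becomes a single deterministic inequality, reducing the selection event to the soft-thresholding condition that appears in the stated limits. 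This is correct and worth making explicit (modulo the boundary case where the deterministic inequality holds with equality, which you rightly set aside as degenerate).

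The one step that does not follow from what you cite is the identification $\mathbf{1}\{\hat\beta_j\neq 0\}=\mathbf{1}\{|(\mathbf{\Pi}+\tau_*\bm Z)_j|>\gamma\alpha\tau_*\}$ ``asymptotically.'' \Cref{theorem2} and \Cref{berthier} give closeness of $\hat{\bm\beta}$ to the surrogate only in normalized $\ell_2$, and two vectors can agree to $o(1)$ in that norm while their supports differ on a positive fraction of coordinates (e.g.\ many entries of size $o(1)$ but nonzero). TPP and FDP are functions of the support of the \emph{actual minimizer}, so your sandwiching—which handles the continuity of the limiting law at the threshold—does not by itself control the discrepancy between $\mathrm{supp}(\hat{\bm\beta})$ and the thresholding event of the effective observation. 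In the LASSO proof this is exactly the delicate part, resolved by a separate argument from the KKT/stationarity conditions of $\hat{\bm\beta}$ (in the spirit of \Cref{lemma:3.2} here) showing that the number of coordinates where the subgradient sits within $\epsilon$ of the threshold, or where $\hat\beta_j$ is nonzero but $o(1)$, is $o_P(p)$; only then can the two supports be matched up to a vanishing fraction of coordinates. You should add this step (adapted to the SGL subgradient, which now carries the group scaling factor) before the sandwich-and-split computation; the remaining Gaussian integrals and the Slutsky step are routine as you say.
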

At high level, although TPP and FDP are not pseudo-Lipschitz, we can approximate them by a sequence of pseudo-Lipschitz functions as $n,p\to\infty$. We omit the proof here as it is extremely similar to the LASSO case by \cite[Theorem B.1]{bogdan2013statistical}.

After establishing how we can use the AMP theory to characterize some statistical quantities, we now introduce the definition of `\textbf{SGL path}' for future study: for a fixed $\gamma$, the SGL path is the space of all SGL solutions $\hat{\bm\beta}(\lambda)$ as $\lambda$ varies in $(0,\infty)$. This notion allows us to understand explicitly how the penalty $\lambda$ affects the performance measures, as well as some interesting trade-offs. For example, one may derive the trade-off between TPP and FDP of SGL, similar to that of the LASSO \cite{su2017false} and that of the SLOPE \cite{bu2021characterizing}, where improving TPP of an estimator always results in worsening FDP as well. Mathematically speaking, the trade-off studies for a given TPP level $u$, the minimum FDP achievable as $\inf _{(\Pi, \Lambda): \operatorname{TPP}^{\infty}(\Pi, \Lambda)=u} \operatorname{FDP}^{\infty}(\Pi, \Lambda)$.

From now on, our simulations will focus on the MSE, TPP (or power) and FDP estimated by AMP instead of the empirical values.

\begin{figure}[ht!]
\centering
\hspace{-5.5cm}
\subfigure[]{\includegraphics[width=3.8cm,height=2.8cm]{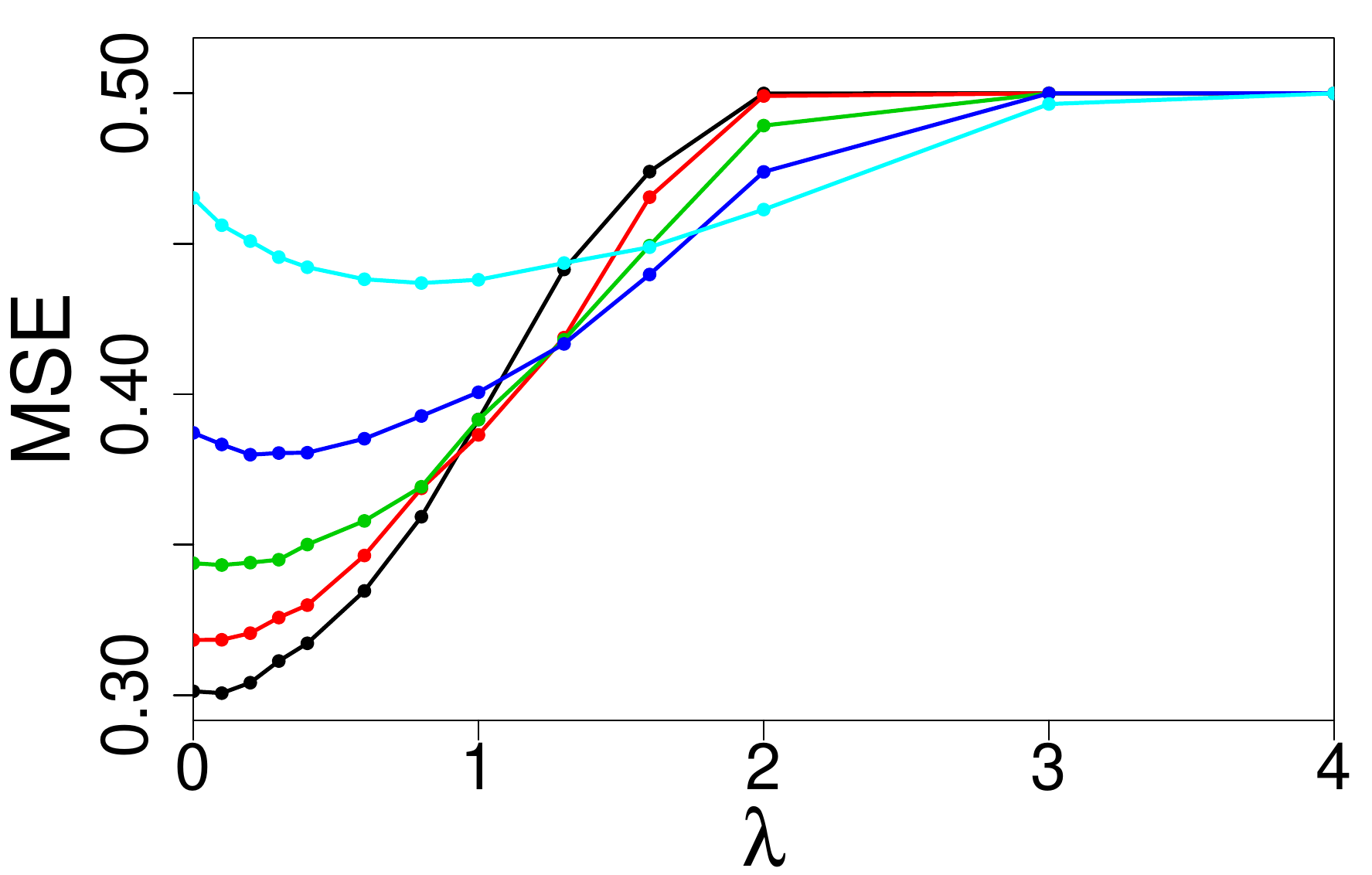}}
\hspace{-0.2cm}
\subfigure[]{\includegraphics[width=3.8cm,height=2.8cm]{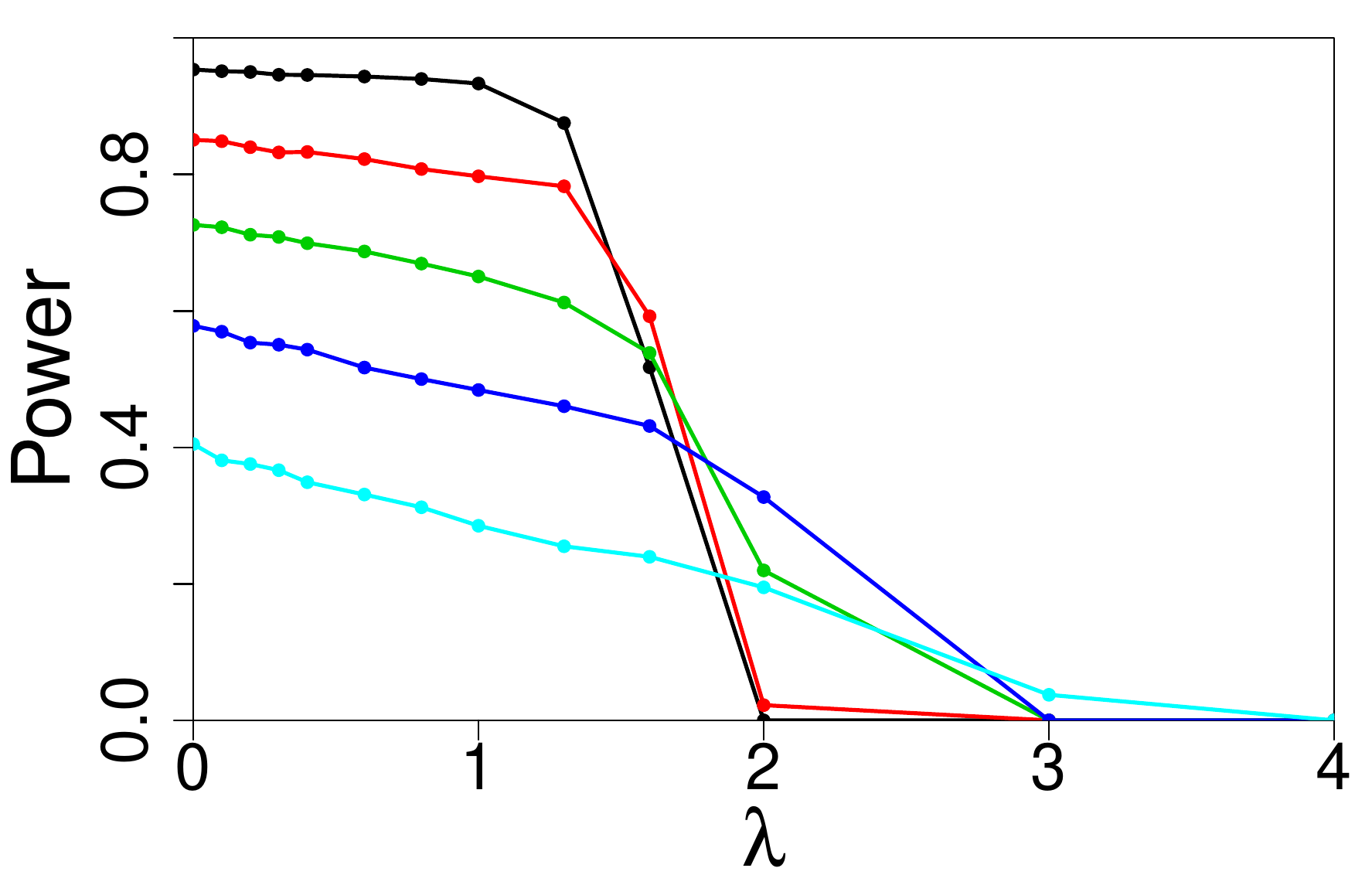}}
\hspace{-0.2cm}
\subfigure[]{\includegraphics[width=3.8cm,height=2.8cm]{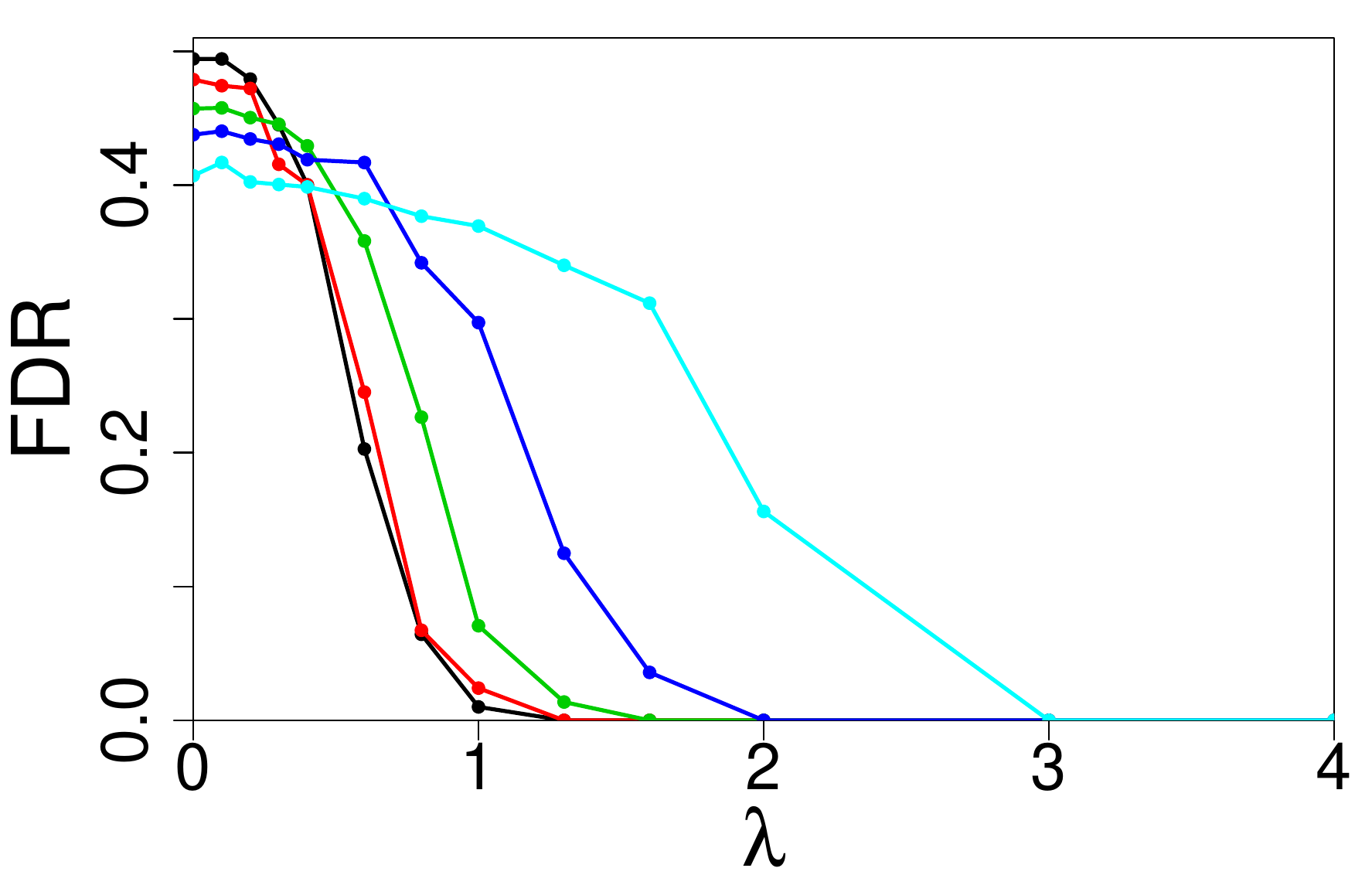}}
\hspace{-0.2cm}
\subfigure[]{\includegraphics[width=3.8cm,height=2.8cm]{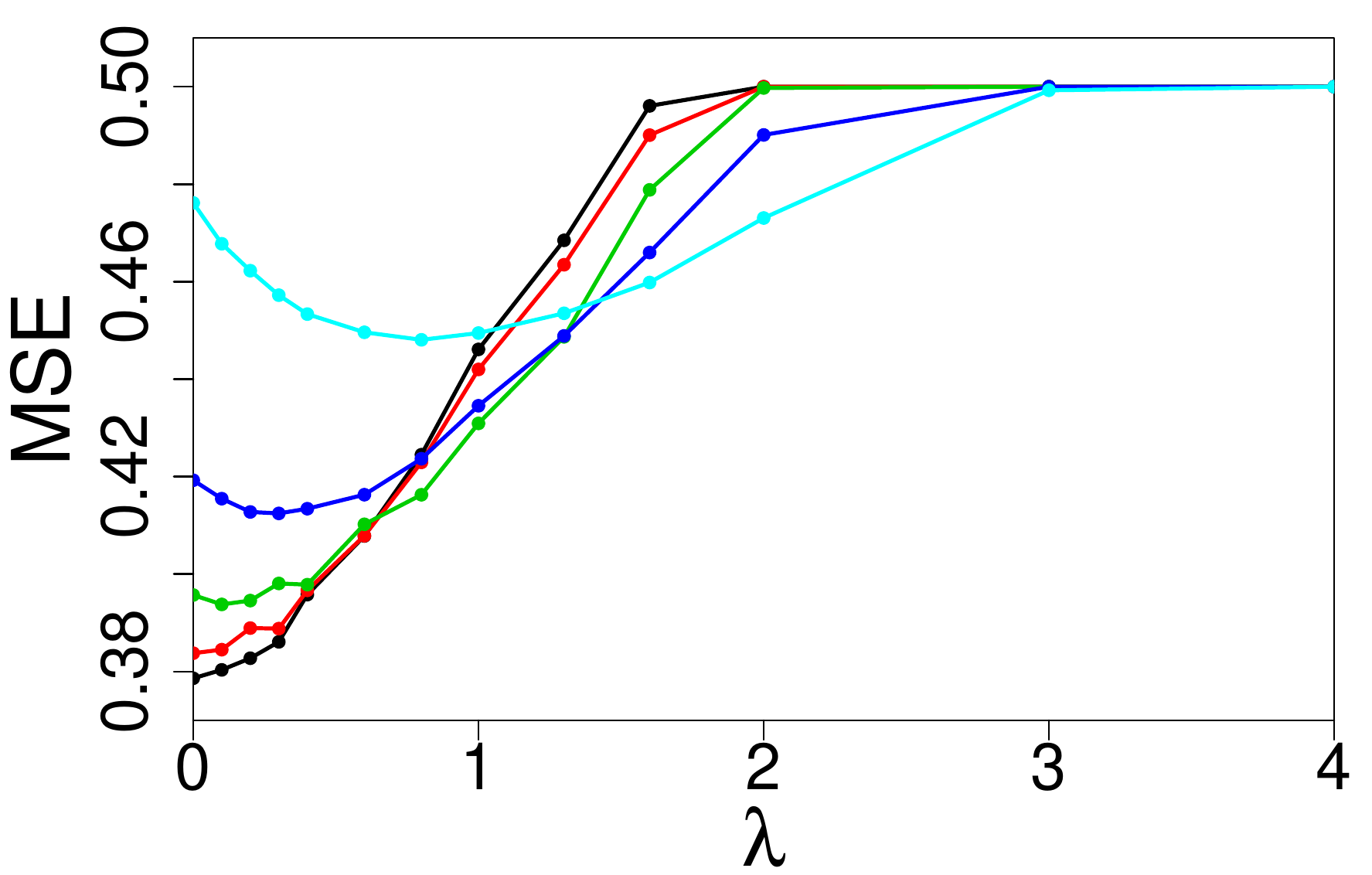}}
\hspace{-0.3cm}
\includegraphics[width=1.5cm]{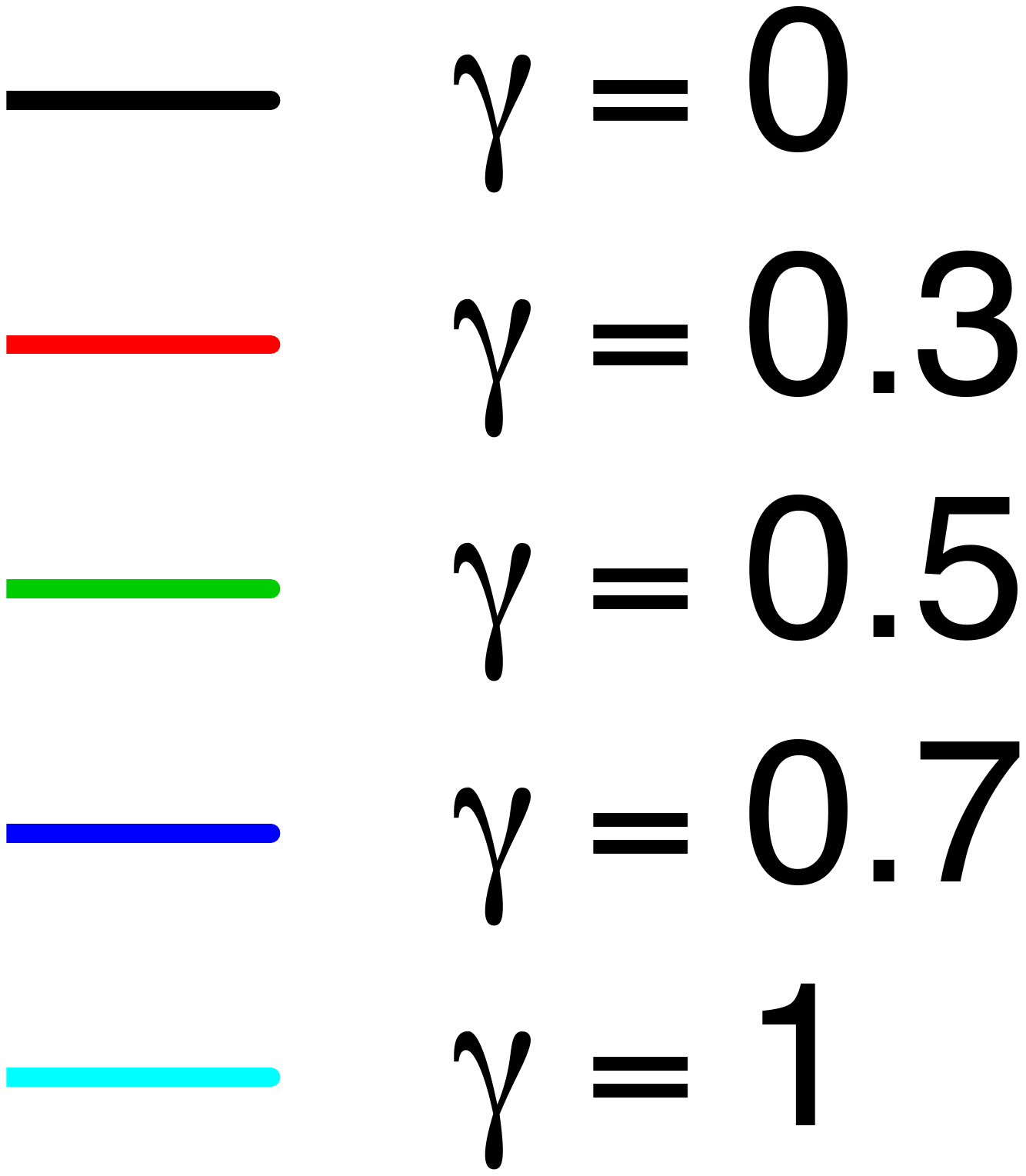}
\hspace{-6cm}
\vspace{-0.5cm}
\caption{(a)(b)(c) MSE/Power (TPP)/FDP on SGL path given perfect group information; (d) MSE on SGL path given mixed group information.}
\label{f4}
\end{figure}

\section{Simulation}

Throughout this section, unless specified otherwise, the default setting is as follows: $\delta:=n/p=0.25$, $\bm X$ has i.i.d. entries from $\mathcal{N}(0,1/n)$, $\gamma=0.5$, $\sigma_{\bm w}=0$ and the prior is Bernoulli($0.5$). We consider at most two groups and set the perfect group information as default. The group information $\bm g$ is perfect if all the true signals are classified into one group, while the other group contains all the null signals. In contrast, the mixed group information is the case when only one group exists, so that the true and null signals are mixed.

\subsection{State Evolution Characterization}
First we demonstrate that AMP state evolution indeed characterizes the solution $\bm{\hat\beta}$ by $\eta_\gamma(\bm\beta+\tau \bm Z,\alpha\tau)$ asymptotically accurately. Figure \ref{fig:mse_high}(a) clearly visualizes and confirms Theorem \ref{theorem1} by the distributional similarity. In Figure \ref{fig:mse_high}(b), the empirical MSE $\|\bm{\hat\beta}-\bm\beta\|^2/p$ has a mean close to the AMP estimate $\delta(\tau^2-\sigma_{\bm w}^2)$ in Corollary \ref{cor:mse} and the variance decreases as the dimension increases.

\begin{figure}[h]
\centering
\hspace{-4.35cm}
\subfigure[]{\includegraphics[width=6cm,height=4cm]{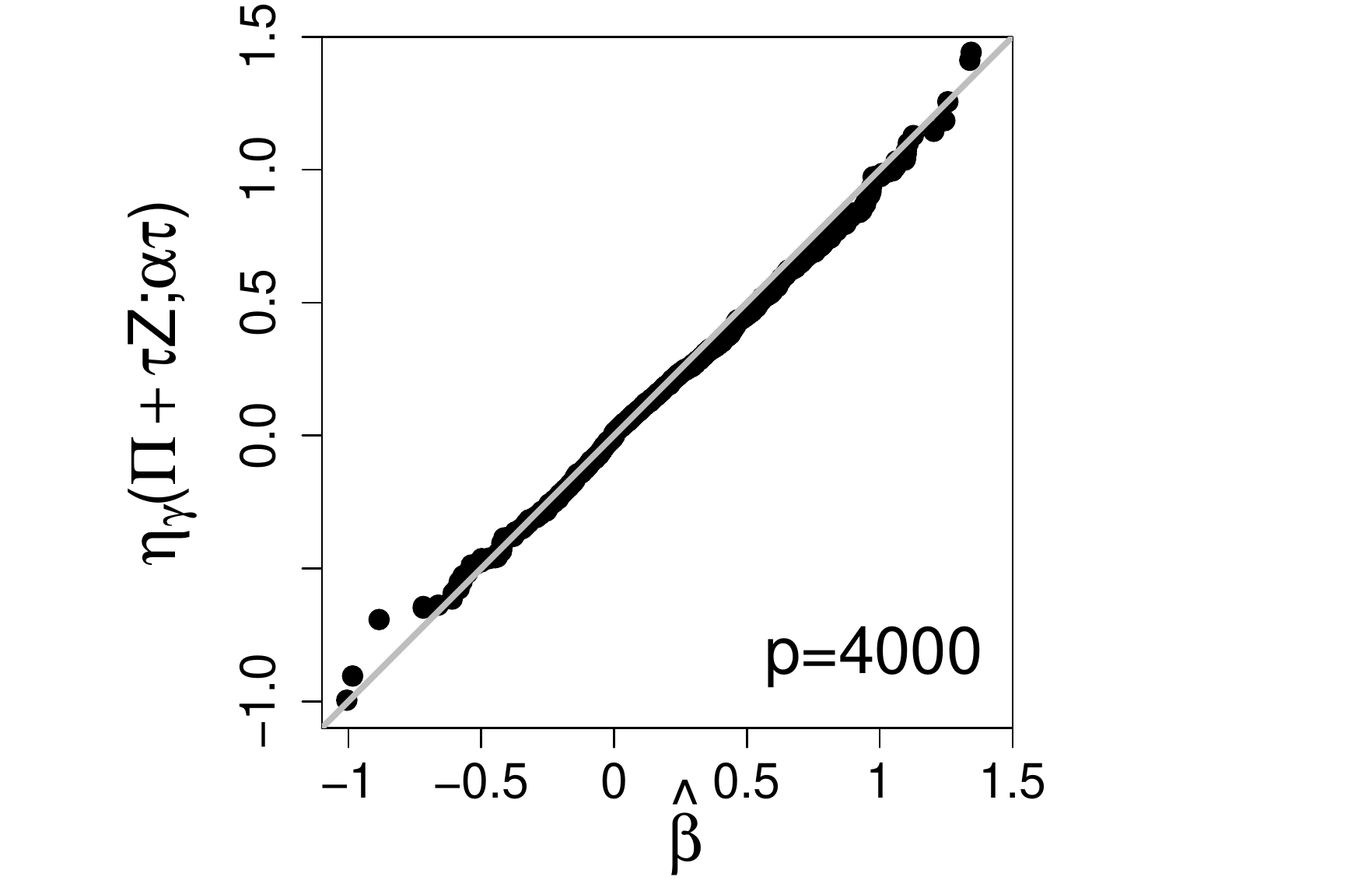}}
\hspace{-1.25cm}
\subfigure[]{\includegraphics[width=7cm,height=4cm]{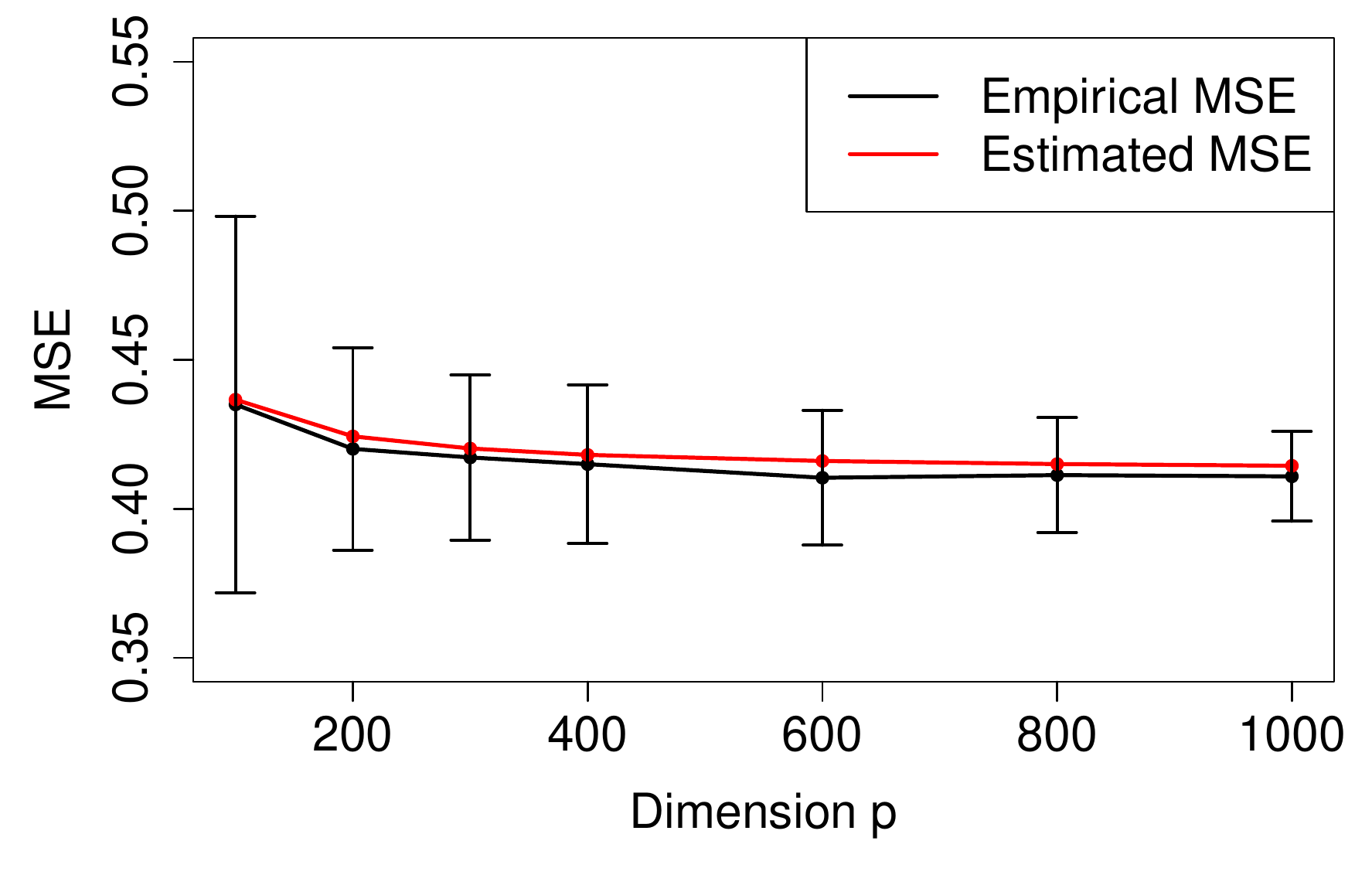}}
\hspace{-4.1cm}
\vspace{-0.5cm}
\caption{Illustration of Theorem \ref{theorem1}: AMP characterizes SGL solution. Notice $\epsilon=0.5,\delta=0.25,\sigma_w=1,\gamma=0.5$ and group information is perfect. (a) Quantile-quantile plot of $\bm{\hat\beta}$ and AMP solution with $\lambda=1$; (b) Empirical and estimated MSE by AMP over 100 independent runs with $\alpha=1$ ($\lambda=0.32$). The error bars correspond to one standard deviation.}
\label{fig:mse_high}
\end{figure}

\subsection{Benefits of Groups}

Now we investigate the benefits of groups under different scenarios. We set the dimension $p=400$ and the noise $\sigma_{\bm w}=0$. Figure
\ref{f4}(a) plots MSE against $\lambda$ given perfect group information, while Figure \ref{f4}(d) demonstrates the case with the mixed group information. Figure
\ref{f4}(b) and \ref{f4}(c) plot the power and FDR against $\lambda$ given the perfect group information.

We observe that, fixing models, better group information helps the models achieve better performances, especially when $\gamma$ is small, i.e. when SGL is closer to the Group LASSO. By comparing Figure \ref{f4}(a) and Figure \ref{f4}(d), we see an increase of MSE when signals are mixed by the group information. Somewhat surprisingly, even SGL with the mixed group information may achieve better MSE than LASSO, which does not use any group information.

On the other hand, for fixed group information, models with different $\gamma$ enjoy the benefit of group information differently: in Figure \ref{f4}(a), we notice that the performance depends on the penalty $\lambda$: if $\lambda$ is small enough, then SGL with smaller $\gamma$ performs better; if $\lambda$ is sufficiently large, then SGL with larger $\gamma$ may be favored. 

We compare SGL with other $\ell_1$-regularized models, namely the LASSO, Group LASSO, adaptive LASSO \cite{zou2006adaptive} and elasitc net \cite{zou2005regularization} in Figure \ref{fig:tradeoffs}, given perfect group information.
In the type I/II tradeoff, Group LASSO and SGL with $\gamma=0.5$ demonstrates dominating performance over other models. However, in terms of the estimation MSE (between $\bm{\hat\beta}$ and 
$\bm\beta$), SGL allows to achieve smaller MSE but selects more features. In both figures, SGL shows a piecewise continuous pattern and it would be interesting to derive the explicit form with AMP in the future.

\begin{figure}[!h]
    \centering
    \includegraphics[width=7cm]{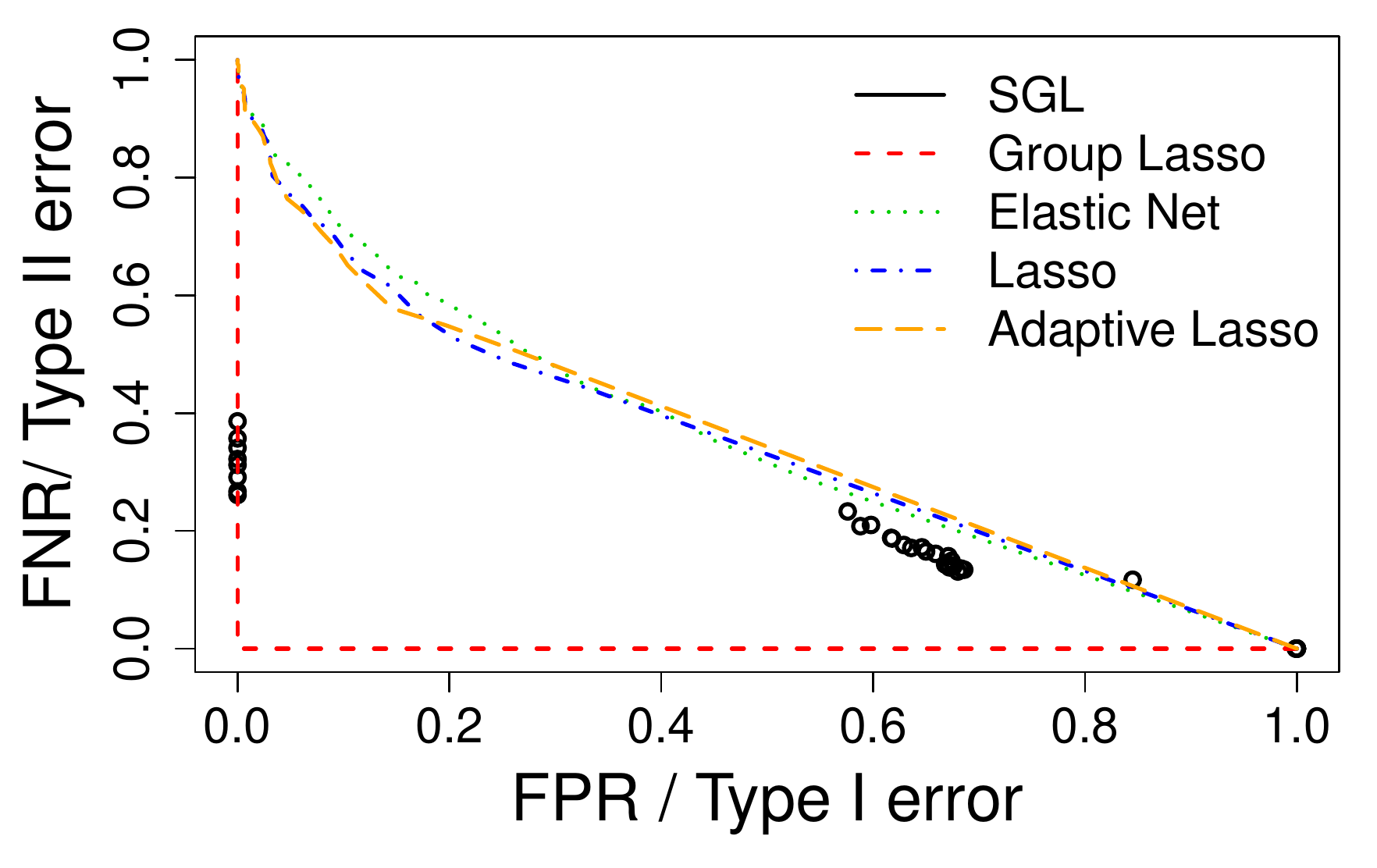}
    \includegraphics[width=7cm]{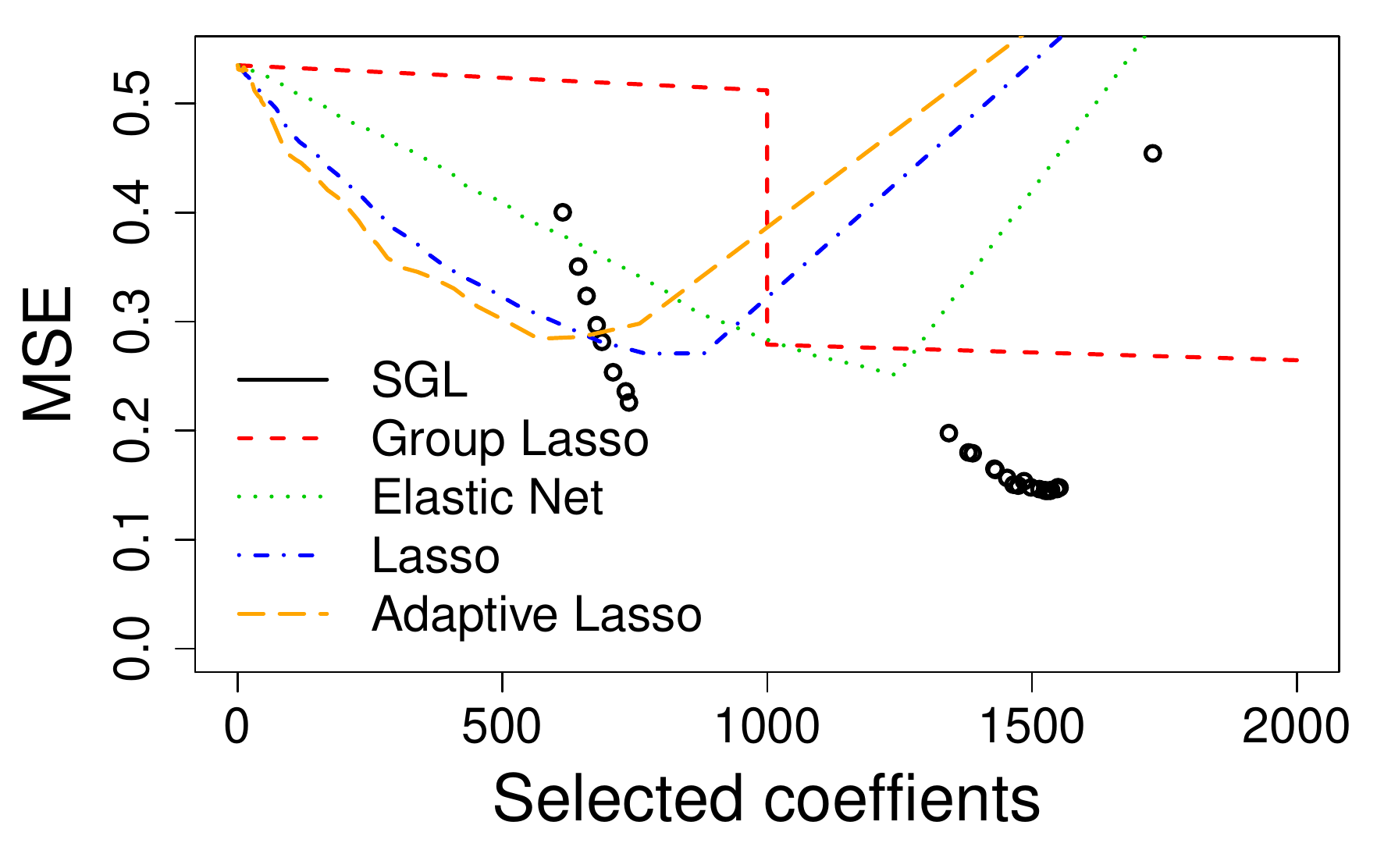}
    \vspace{-0.5cm}
    \caption{Left: Type I/II error tradeoff. Right: MSE against the number of selected coefficients with perfect group information. Here $\sigma_w=0, \bm X\in\mathbb{R}^{1000\times 2000}$ is i.i.d. Gaussian and the prior follows a Bernoulli-Gaussian with 0.5 probability being standard Gaussian.}
    \label{fig:tradeoffs}
\end{figure}

We further compare SGL AMP to the MMSE AMP (or Bayes-AMP) \cite{donoho2010message}, which by design finds the minimum MSE over a wide class of convex regularized estimators \cite{celentano2019fundamental}. In Figure \ref{fig:mmse}, we plot SGL with different $\gamma$ and carefully tune the penalty of each model to achieve its minimum MSE. We summarize that, empirically, SGL AMP with good group information is very competitive to MMSE AMP and smaller $\gamma$ leads to better performance. Nevertheless, this observed pattern may break if the group information is less correct.

\begin{figure}[!htb]
\centering
\includegraphics[width=0.5\linewidth]{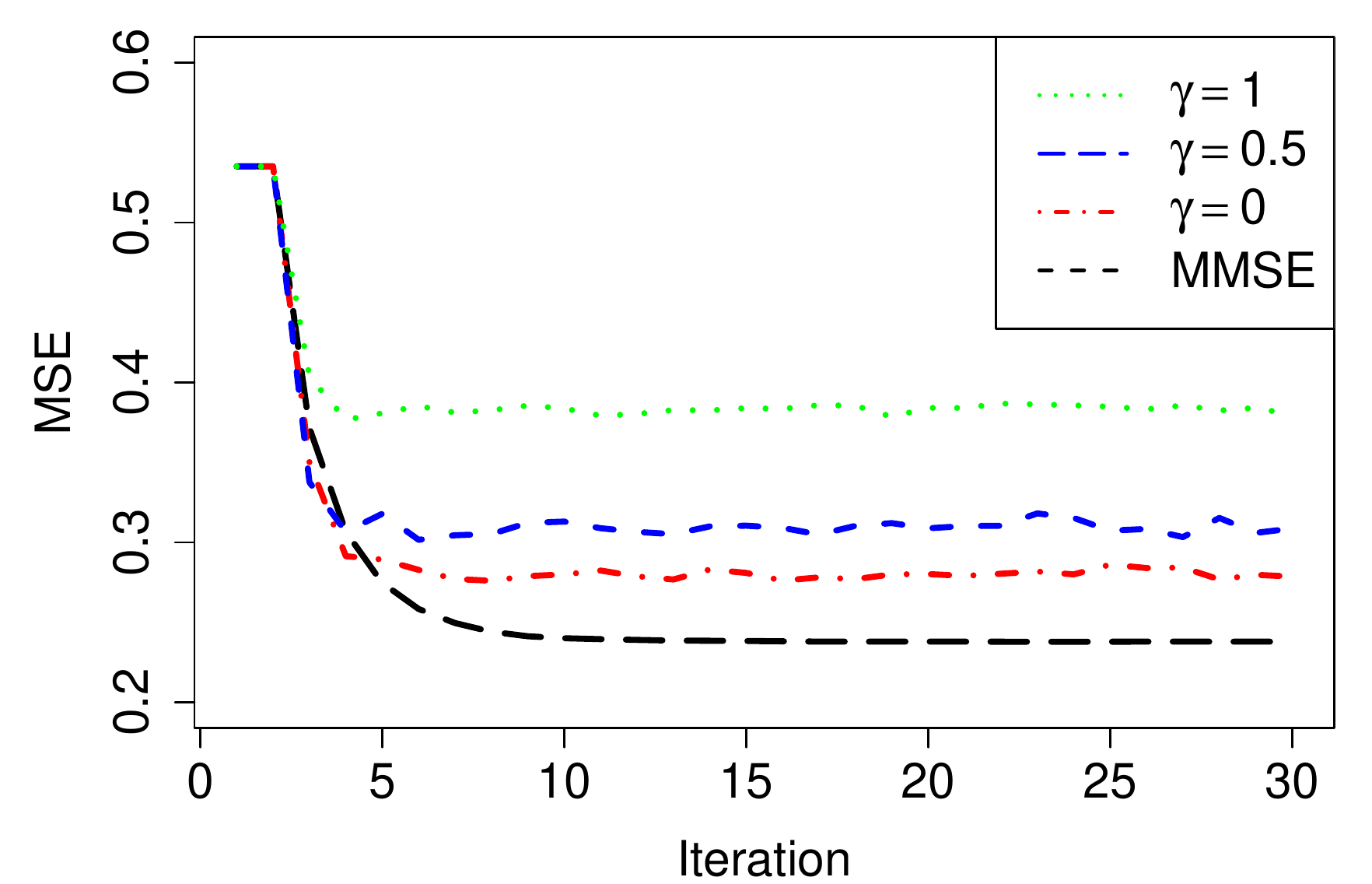}
\vspace{-0.3cm}
\caption{MSE of AMP algorithms with perfect group information. Here $n=1000, p=2000, \sigma_w=0, \bm X$ is i.i.d. Gaussian and the prior follows a Bernoulli-Gaussian with 0.5 probability being standard Gaussian, and being zero otherwise.}
\label{fig:mmse}
\end{figure}

\vspace{-0.2cm}
\subsection{Extensions of SGL AMP}
\label{extensions}
The theoretical result of vanilla AMP assumes that the design matrix $\bm X$ is i.i.d. Gaussian (or sub-Gaussian) \cite{bayati2011dynamics,bayati2015universality}. The convergence of AMP may be difficult if not impossible on the real-world data. Nevertheless, empirical results suggest that AMP works on a much broader class of data matrices even without theoretical guarantees. In our experiments, we observe that the performance of AMP is very similar to Figure \ref{fig:MSE1} for i.i.d. non-Gaussian data matrices (c.f. Appendix\ref{app:extension}).

\begin{figure}[!htb] 
\centering
\includegraphics[width=0.5\linewidth]{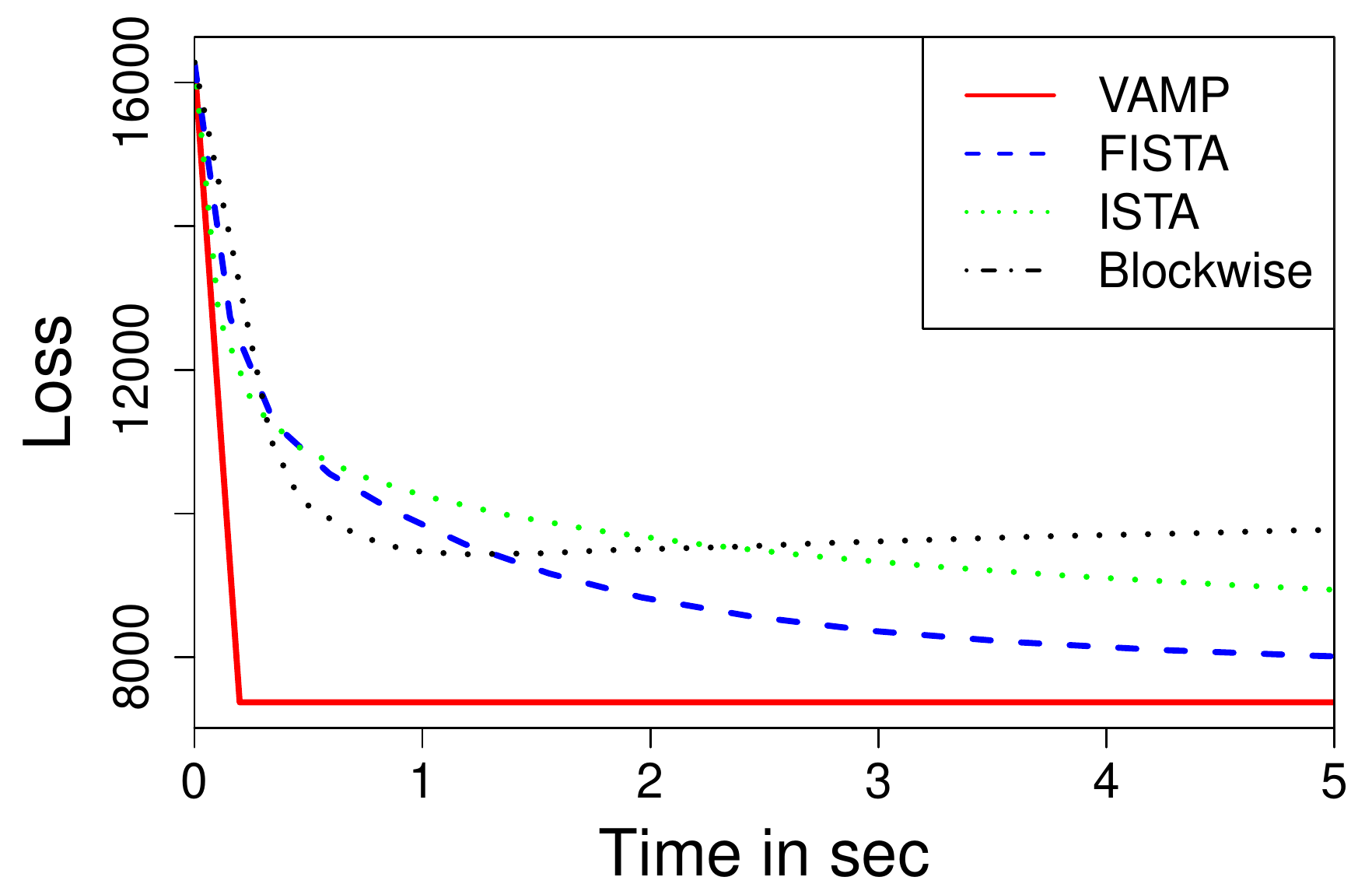}
\vspace{-0.3cm}
\caption{VAMP algorithm on the Adult income dataset with $\gamma = 0.5,  \bm g=(1,\cdots,1)$, $\lambda = 5$ and damping constant $\mathcal{D}=0.1$.}
\label{fig:vamp}
\end{figure}

On the other hand, we may relax the assumption of `i.i.d.' by leveraging a variant of AMP, called vector-AMP or VAMP \cite{rangan2019vector}. It has been rigorously shown that VAMP works on a larger class of data matrices, i.e. the right rotationally-invariant matrices. We emphasize that applying VAMP to non-separable penalties is in general an open problem, though there has been some progress for certain specific type of non-separability \cite{manoel2018approximate}. In Appendix \ref{app:extension}, we substitute the soft-thresholding function with the SGL proximal operator to extend the LASSO VAMP to the SGL VAMP. We implement our SGL VAMP (Algorithm \ref{alg:vamp}) on the Adult Data Set \cite{kohavi1996scaling}, which contains 32,561 samples and 124 features to predict an adult's income influenced by the individual’s education level, age, gender, occupation, and etc. We observe that on this specific dataset, SGL VAMP converges in one single iteration and shows its potential to work on other real datasets.



\section{Discussion and Future Work}
\label{Conclusion}
In this work, we develop the approximate message passing algorithm for Sparse Group LASSO via the proximal operator. We demonstrate that the proximal approach, including AMP, ISTA and FISTA, is more unified and efficient than the blockwise approach in solving high-dimensional linear regression problems. The key to the acceleration of convergence is two-fold. On one hand, by employing the proximal operator, we can update the estimation of the SGL minimizer for each group independently and simultaneously. On the other hand, AMP has an extra `Onsager reaction term’, $\langle \eta_{\gamma,\bm g}'(\mathbf{X}^*\mathbf{z}^t + \bm{\beta}^t, \alpha \tau_t) \rangle$, which corrects the algorithm at each step non-trivially. 

Our analysis of SGL AMP reveals some important results on the state evolution and the calibration. For example, the state evolution of SGL AMP only works on a bounded domain of $\alpha$, whereas in the LASSO case, the $\alpha$ domain is not bounded above and makes the penalty tuning more difficult. We then prove that SGL AMP converges to the true minimizer and characterizes the solution exactly, in an asymptotic sense. We highlight that such characterization is empirically accurate in the finite sample scenario and allows us to analyze certain statistical quantities of the SGL solution closely, such as $\ell_2$ risk, type I/II errors as well as the effect of the group information on these measures.

Our work suggests several possible future research. In one direction, it is promising to extend the proximal algorithms (especially AMP) to a broader class of models with structured sparsity, such as the sparse linear regression with overlapping groups, Group SLOPE and the sparse group logistic regression. On a different road, although AMP is robust in distributional assumptions in the sense of fast convergence under i.i.d. non-Gaussian measurements, multiple variants of AMP may be applied to adapt to real-world data. To name a few, one may look into SURE-AMP \cite{guo2015near}, EM-AMP \cite{vila2011expectation,vila2013expectation} and VAMP \cite{rangan2019vector} to relax the known signal assumption and non-i.i.d. measurement assumption.

\printbibliography
\clearpage
\appendix

\appendix
\section{SGL Proximal Operator} \label{Appendix A}
\label{app:operator}
In this appendix we derive the SGL proximal operator rigorously, with an emphasis of the close connection between the soft-thresholding and the SGL proximal operator. Recall that the cost function is
\begin{equation} \label{cost}
\mathcal{C}(s,\bm \beta) = \frac{1}{2}\|s -  \bm \beta \|_2^2 + (1 - \gamma)\lambda \sum_{l=1}^L \sqrt{p_l} \|\bm \beta_l\|_2 +  \gamma \lambda \|\bm \beta\|_1.
\end{equation}
This can also be written as the following
\begin{align*}
    \sum_{l = 1}^L \left( \frac{1}{2}\|s_l -  \bm \beta_l \|_2^2 + (1 - \gamma)\lambda  \sqrt{p_l} \|\bm \beta_l\|_2 +  \gamma \lambda \|\bm \beta_l\|_1 \right).
\end{align*}
We apply the Proposition 1 in Appendix D.1 from \cite{shi2016primer} below 
\begin{proposition}[\cite{shi2016primer}]
Let $\bm x \in \mathbb{R}^n$. If $f(\bm x)$ is a convex, homogeneous function of order 1 (i.e.,$f(\alpha \bm x) = \alpha f(\bm x)$ for $\alpha > 0$) and $g(\bm x) = \beta \|\bm x\|_2$, then $\text{prox}_{f + g} = \text{prox}_g \circ \text{prox}_f$. 
\end{proposition}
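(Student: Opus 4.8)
The plan is to argue directly through the first-order (subgradient) optimality conditions of the three proximal problems, exploiting two special features of the hypotheses: the positive homogeneity of $f$ and the fact that $\text{prox}_g$ for $g=\beta\|\cdot\|_2$ merely rescales its argument. Write $\bm u:=\text{prox}_f(\bm v)$ and $\bm w:=\text{prox}_g(\bm u)$; the goal is to show $\bm w=\text{prox}_{f+g}(\bm v)$. Since $f$ and $g$ are finite-valued convex functions on all of $\mathbb{R}^n$ (both are norms up to scaling in the SGL application), the Moreau--Rockafellar sum rule gives $\partial(f+g)=\partial f+\partial g$ with no constraint qualification needed, so it suffices to verify the inclusion $\bm v-\bm w\in\partial f(\bm w)+\partial g(\bm w)$, which characterizes the minimizer of the strongly convex proximal objective for $f+g$.

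From the definitions of $\bm u$ and $\bm w$, their optimality conditions read $\bm v-\bm u\in\partial f(\bm u)$ and $\bm u-\bm w\in\partial g(\bm w)$. Decomposing $\bm v-\bm w=(\bm v-\bm u)+(\bm u-\bm w)$, the second summand already lies in $\partial g(\bm w)$, so the whole task reduces to transporting the subgradient $\bm v-\bm u$ from the point $\bm u$ to the point $\bm w$, i.e.\ to showing $\bm v-\bm u\in\partial f(\bm w)$.

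This transport is where homogeneity enters. First I would record the standard consequences of $f$ being convex and positively homogeneous of degree $1$ (so $f(0)=0$): every $\bm p\in\partial f(\bm u)$ obeys Euler's relation $\langle\bm p,\bm u\rangle=f(\bm u)$ together with $\langle\bm p,\bm x\rangle\le f(\bm x)$ for all $\bm x$, proved by testing the subgradient inequality at $\bm x=0$ and $\bm x=2\bm u$. Two facts follow at once: the subdifferential is invariant along rays, $\partial f(c\bm u)=\partial f(\bm u)$ for every $c>0$, and $\partial f(\bm u)\subseteq\partial f(0)=\{\bm p:\langle\bm p,\bm x\rangle\le f(\bm x)\ \forall\bm x\}$. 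Next I would invoke the explicit block soft-thresholding form $\bm w=\text{prox}_g(\bm u)=\bigl(1-\beta/\|\bm u\|_2\bigr)_+\,\bm u=c\,\bm u$ for a scalar $c\ge 0$. If $c>0$, ray invariance gives $\partial f(\bm w)=\partial f(\bm u)\ni\bm v-\bm u$, closing the argument. If $c=0$, then $\bm w=0$, and I would instead check $\bm v\in\partial f(0)+\partial g(0)$ directly, using $\bm v-\bm u\in\partial f(\bm u)\subseteq\partial f(0)$ and noting that $c=0$ forces $\|\bm u\|_2\le\beta$, which is exactly $\bm u\in\partial g(0)=\{\bm p:\|\bm p\|_2\le\beta\}$; writing $\bm v=(\bm v-\bm u)+\bm u$ then yields the claim since $\bm w=0$.

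The main obstacle I anticipate is precisely the boundary case $c=0$ (equivalently $\bm w=0$), where ray invariance is unavailable and one must fall back on the containment $\partial f(\bm u)\subseteq\partial f(0)$ together with the explicit description of $\partial g(0)$; getting these two inclusions to combine correctly at the single degenerate point $\bm w=0$ is the only step that is not purely formal bookkeeping. Notably, the argument never uses the form of $f$ beyond convexity and degree-$1$ homogeneity, so the resulting composition rule applies verbatim with $f=\gamma\lambda\|\cdot\|_1$ and $g=(1-\gamma)\lambda\sqrt{p_l}\|\cdot\|_2$, recovering the groupwise formula \eqref{eta} as soft-thresholding followed by block shrinkage.
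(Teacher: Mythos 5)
Your argument is correct. Note first that the paper itself does not prove this proposition: it is imported verbatim from Proposition 1 in Appendix D.1 of the cited reference and then applied to obtain \eqref{eta}, so there is no in-paper proof to compare against. Your subdifferential argument is a valid self-contained verification of the cited result, and it is essentially the standard proof: writing $\bm u=\text{prox}_f(\bm v)$, $\bm w=\text{prox}_g(\bm u)$, reducing the claim to $\bm v-\bm u\in\partial f(\bm w)$ via the trivial inclusion $\partial f(\bm w)+\partial g(\bm w)\subseteq\partial(f+g)(\bm w)$ (you do not even need the nontrivial direction of Moreau--Rockafellar), and then using that $\text{prox}_g$ outputs a nonnegative multiple $c\bm u$ of its input together with the ray-invariance $\partial f(c\bm u)=\partial f(\bm u)$ for $c>0$, which is exactly what degree-one homogeneity supplies through Euler's relation $\langle\bm p,\bm u\rangle=f(\bm u)$. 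Your handling of the degenerate case $c=0$ is also right: there $\|\bm u\|_2\le\beta$ gives $\bm u\in\partial g(0)$, and the containment $\partial f(\bm u)\subseteq\partial f(0)$ (valid for any sublinear $f$) closes the argument at $\bm w=0$, including the corner case $\bm u=0$. What your write-up buys over the paper's citation is transparency about which hypotheses do the work --- only convexity and positive $1$-homogeneity of $f$, plus the fact that $\text{prox}_{\beta\|\cdot\|_2}$ is a scalar rescaling --- which makes it clear the composition rule applies groupwise with $f=\gamma\lambda\|\cdot\|_1$ and $g=(1-\gamma)\lambda\sqrt{p_l}\|\cdot\|_2$ to recover \eqref{eq:prox_op}.
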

Since 
\begin{align*}
   \text{prox}_{f} (x,b) = \begin{cases} 
      x - b & x > b \\
       x + b & x < -b  \\
      0 & \text{otherwise}
   \end{cases}
\end{align*}
and
\begin{align*}
    \text{prox}_{g} (x,b) = \begin{cases} 
      (1 - \frac{b}{\|x \|_2}) x & \|x\|_2 > b \\
      0 & \text{otherwise}
   \end{cases},
\end{align*}
 we then obtain our proximal operator
\begin{align*}
    \eta_{\gamma}(\bm s, \lambda)^{(j)} = \begin{cases}
    \eta_{\text{soft}}(s^{(j)}, \gamma  \lambda)\left(1 - \frac{(1 - \gamma)\lambda \sqrt{p_{l_j}}}{ \| \eta_{\text{soft}}(\bm s_{l_j}, \gamma \lambda) \|_2 }\right) &  \|\eta_{\text{soft}}(s^{(j)}, \gamma  \lambda) \|_2 > (1-\gamma) \lambda \sqrt{p_{l_j}} \\
    0 & \text{otherwise}
    \end{cases} 
\end{align*}

\section{Proximal Methods Analysis} \label{Appendix B}
\label{app:proximal}

\begin{figure}[!htb]
\centering
\hspace{-0.5cm}
\subfigure[]{\includegraphics[width=5.4cm]{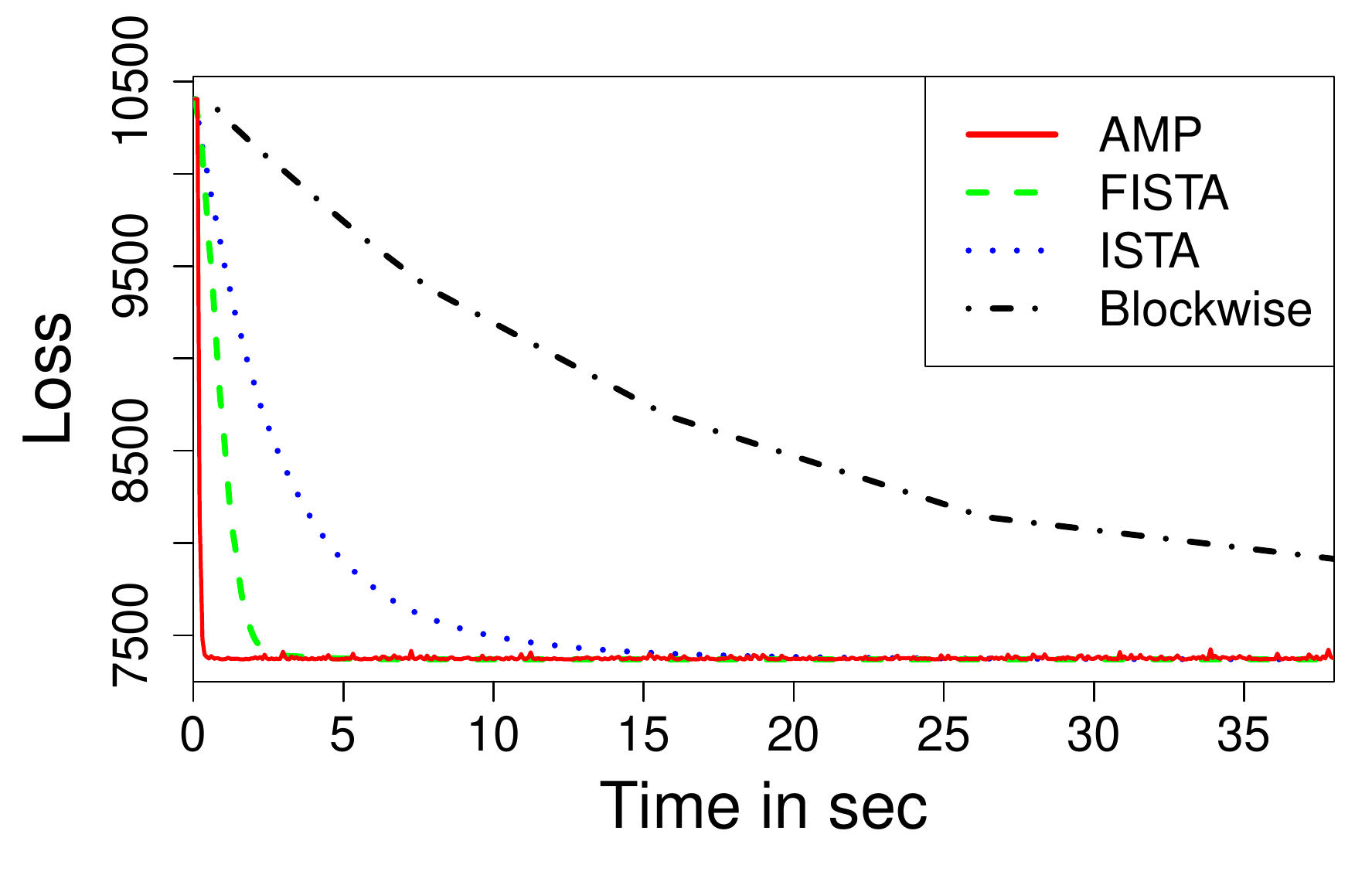}}
\hspace{-0.15cm}
\subfigure[]{\includegraphics[width=5.4cm]{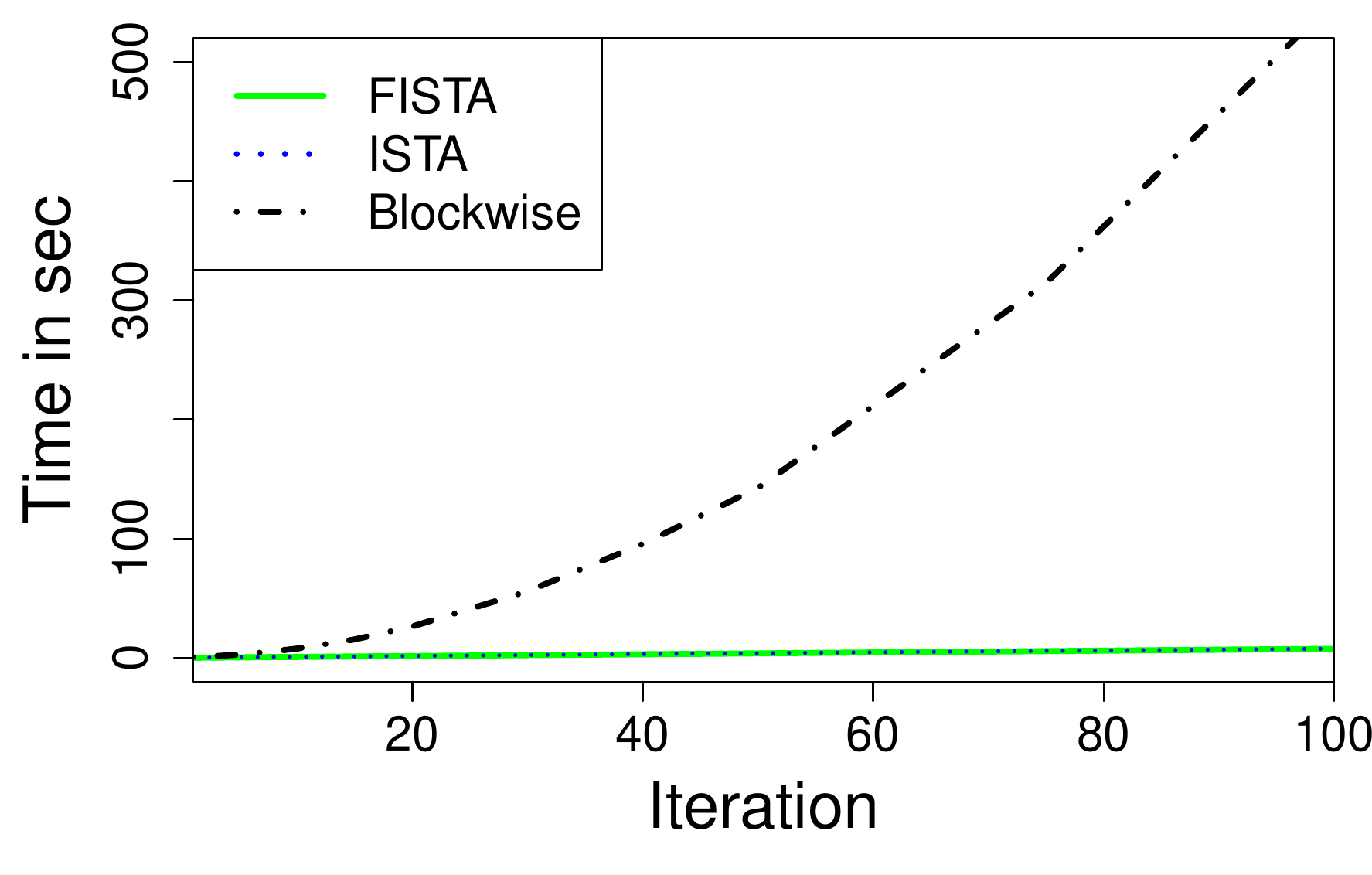}}
\hspace{-0.15cm}
\subfigure[]{\includegraphics[width=5.4cm]{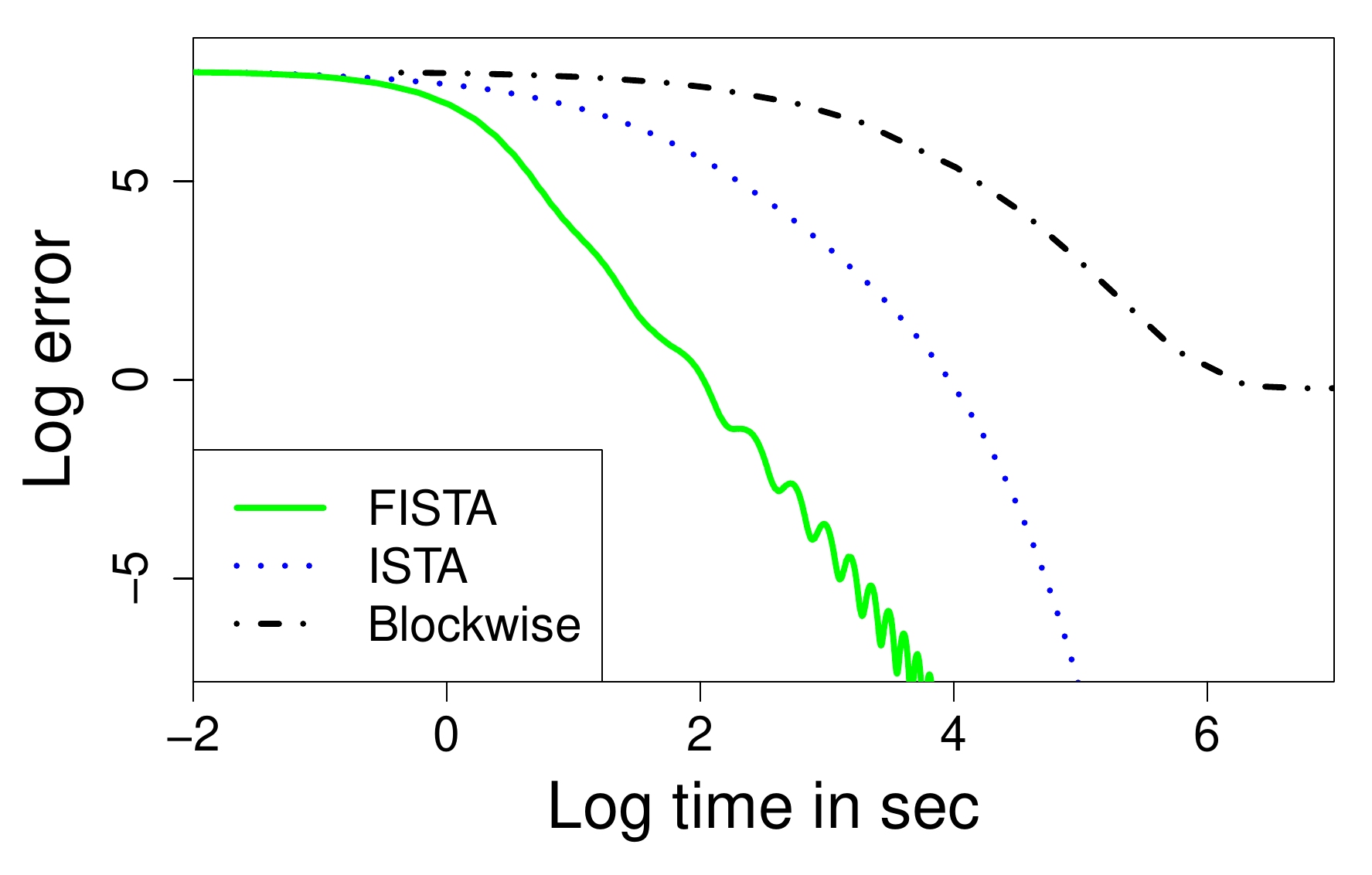}}
\hspace{-0.5cm}
    \vspace{-0cm}
    \caption{Comparison of proximal and blockwise methods by runtime. $p = 4000, n = 2000, \gamma = 0.5, \bm g=(1,\cdots,1)$, the entries of $\mathbf{X}$ are i.i.d. $\mathcal{N}(0,1/n)$, $\lambda = 1$, and the prior $\mathbf{\beta} $ is $5\times$Bernoulli(0.1).}
\label{fig:times}
\end{figure}

In this appendix, we analyze closely the proximal gradient descent (SGL ISTA) in comparison with the blockwise descent. We first state the two approaches for SGL. For the blockwise descent, we define $\bm r_{(-l)}:=\mathbf{y} - \sum_{k \neq l} \mathbf{X_k}\bm{\hat\beta}_k$ for group $l$.

\paragraph{ISTA (proximal gradient descent)}
\begin{align}
{\bm\beta}^{t+1}=\eta_\gamma({\bm\beta}^{t}+s \bm X^\top(\bm y-\bm X{\bm\beta}^t);s\lambda)
\label{ista}
\end{align}

\paragraph{FISTA (Nesterov-accelarated ISTA)}
\begin{equation}
\begin{split}
{\bm\beta}^{t+1}&=\eta_\gamma({\bm M}^{t}+s \bm X^\top(\bm y-\bm X{\bm M}^t);s\lambda)
\\
d_{t+1}&=(1+\sqrt{1+4d_t^2})/2
\\
\bm M^{t+1}&=\bm\beta^t+\frac{d_t-1}{d_{t+1}}(\bm\beta^t-\bm\beta^{t-1})
\end{split}
\label{fista}
\end{equation}
where $d_1 = 1, s \in (0, 1/\|\mathbf{X} \|_F)$.
\\\\
\begin{algorithm}[H]
\centering 
\caption{Blockwise descent \cite{simon2013sparse}}
\label{alg:block}
\begin{algorithmic}
\STATE{\textbf{Input}:
Data matrix $\mathbf{X}$, response $\mathbf{y}$, group information $\bm g$, number of iterations $T$, step size $s$ } \\
 \FOR{$t$ in $1:T$}
    \FOR{ $l$ in $1: L$ }
    \IF{$\|\eta_{\text{soft}}(\mathbf{X}_l \mathbf{r}_{(-l)}, \gamma \lambda ) \|_2 \leq (1 - \gamma) \lambda $}
    \STATE{$\bm{\hat{\beta}}_{l} \leftarrow U_\gamma(\bm{\hat{\beta}}_{l}- s \bm X_l^\top(\bm X_l\bm{\hat{\beta}}_{l}-\mathbf{r}_{(-l)}), s)$}
    \ELSE
    \STATE{$\bm{\hat{\beta}}_{l}\leftarrow \bm 0$}
    \ENDIF
    \ENDFOR
    \ENDFOR
\end{algorithmic}
\end{algorithm}
Here the update rule, for group $l$, is
\begin{align*}
U_\gamma(\bm \theta,s) =& \left(1 -\frac{s(1-\gamma)\lambda\sqrt{p_l}}{||\eta_{\text{soft}}(\bm \theta,s\gamma \lambda )||_2}\right)_+
\cdot\eta_{\text{soft}}(\bm \theta,s\gamma \lambda ). 
\end{align*}

For ISTA, if we denote $\bm v_l:=\bm {\hat{\beta}}_l - s \mathbf{X}_l^{\top} (\mathbf{X}_l\bm{\hat{\beta}}_l - \mathbf{y})$, then after some algebra we have:
\begin{algorithm}[!htb]
\centering
 \caption{ISTA (Proximal Gradient Descent)}
 \label{alg:ista}
\begin{algorithmic}
\STATE{\textbf{Input}: Data matrix $\mathbf{X}$, response $y$, group information $\bm g$, number of iterations $T$, step size $s\leq 1/\|\mathbf{X}^\top \mathbf{X}\|_F$}
 \FOR{$t$ in $1:T$}
     \FOR{ $l$ in $1: L$ }
     \IF{$\mathcal{C}(\bm v_l,U_\gamma(\bm v_l,s))<\mathcal{C}(\bm v_l,\bm 0)$}
 \STATE{$\bm {\hat\beta}_l = U_\gamma(\bm {\hat\beta}_l - s \mathbf{X}_l^\top (\mathbf{X}_l\bm{\hat\beta}_l - \mathbf{y}), s)$ }
 \ELSE
 \STATE{$\bm{\hat\beta}_l\leftarrow\bm 0$}
 \ENDIF
 \ENDFOR
 \ENDFOR
\end{algorithmic}
\end{algorithm}

We note that both approaches are examples of majorization-minimization algorithms. But we highlight that the proximal approach updates the estimation groupwise independently in the sense that $\bm r_{(-l)}$ is not required, which depends on the estimates in all other groups. We remark that computing $\bm r_{(-l)}$ for each $l$ has complexity $O(np)$ and is computationally more expensive than directly calling $\bm y$. Furthermore, if other estimates $\bm{\hat\beta}_{k\neq l}$ are not accurate enough, then the update of $\bm{\hat\beta}_l$ suffers and is therefore not efficient. In Figure \ref{fig:times}, we plot the the convergence of the accelerated blockwise descent in R-package \texttt{SGL} against time, together with the convergence of proximal methods. In terms of the loss function $\mathcal{C}$ and the error $|\mathcal{C}-\min_{\beta}\mathcal{C}|$, even un-accelerated proximal gradient descent converges much faster in time than the accelerated blockwise descent. Especially, the complexity per iteration is much larger for the blockwise descent. In addition, the independence of updates between groups allows the algorithm to further speedup via the parallel computing within each group.

As for the rigorous convergence analysis, we claim from the analysis of general proximal gradient descent in \cite{parikh2014proximal}, that ISTA and FISTA is guaranteed to converge on any data matrix $\bm X$ and the convergence rates are $O(1/t)$ for ISTA and $O(1/t^2)$ for FISTA. 

We can easily extend the proximal gradient to other models by minimizing
\begin{equation} \label{extended objective}
    \min_{\beta \in \mathbb{R}^p} L_{\bm X,\bm y}(\bm\beta)  + (1 - \gamma)\lambda \sum_{l=1}^L \sqrt{p_l} ||\bm\beta_l||_2 +  \gamma \lambda ||\bm\beta||_1
\end{equation}
and guarantees the convergence as long as $\nabla L$ is Lipschitz continuous. For example, we can consider the logistic regression $L(\bm\beta)=\frac{1}{n}\sum_{i=1}^n [\log\left(1+\exp(x_i^\top\bm\beta)\right)+y_i x_i^\top\bm\beta]$. The generalized proximal gradient descent is
\begin{align*}
{\bm\beta}^{t+1}=\eta_\gamma({\bm\beta}^{t}-s_t \nabla L(\bm\beta^t);s_t\lambda).
\end{align*}

\section{Proof of Main Results} \label{Appendix C}
We first prove Theorem \ref{theorem1}, assuming that Theorem \ref{theorem2} and Lemma \ref{berthier} hold.

\begin{proof}[Proof of Theorem \ref{theorem1}]
The proof is similar to \cite{bu2019algorithmic} Theorem 3. 
By Definition \ref{lipschitz} of the uniformly pseudo-Lipschitz function of order $k$ and by the Triangle Inequality, we obtain that for any $p$ and $t$, 
\begin{align*}
 &\quad    |\varphi_p (\bm \beta^t, \bm \beta ) - \varphi_p ( \hat{\bm \beta}, \bm \beta )  |/ \|\bm \beta^t - \hat{ \bm \beta}  \|\\
 \leq& \frac{L}{\sqrt{2p}} \left(1 + \left(\frac{\| (\bm \beta^t, \bm \beta ) \|}{\sqrt{2p}}\right)^{k-1} + \left(\frac{\| ( \hat{ \bm \beta} , \bm \beta ) \|}{\sqrt{2p}}\right)^{k-1} \right)  \\
 \leq &\frac{L}{\sqrt{2p}}\left(1 + \Big(\frac{\| \bm \beta^t \|}{\sqrt{2p}}\Big)^{k-1} + \Big(\frac{\|  \hat{ \bm \beta}  \|}{\sqrt{2p}}\Big)^{k-1} + \Big(\frac{\|\bm \beta \|}{\sqrt{2p}}\Big)^{k-1} \right) \\ 
\end{align*}
Taking limits first with respect to $p$ and then with respect to $t$, we notice that by Theorem \ref{theorem2}, $\frac{1}{p} \|\bm \beta^t - \hat{\bm \beta} \|$ converges to zero and thus the left hand side goes to infinity, unless the numerator converges to zero as well. We show that that this is the case.

For the term on the right hand side, we claim it is finite as 
\begin{equation} \label{B.2}
  \lim_t \lim_p \Big(\frac{\| \bm \beta^t \|}{\sqrt{2p}}\Big)^{k-1},
    \lim_t \lim_p \Big(\frac{\| \hat{\bm \beta} \|}{\sqrt{2p}}\Big)^{k-1}, 
    \lim_t \lim_p \Big(\frac{\| \bm \beta \|}{\sqrt{2p}}\Big)^{k-1}
\end{equation}
are all finite. Consequently we have:
\begin{align*}
    \lim_p \varphi_p (\hat{\bm \beta}, \bm \beta) = \lim_t \lim_p \varphi_p (\bm \beta^t, \bm \beta).
\end{align*}
The proof is complete once we see from Lemma \ref{berthier} that
\begin{align*}
    \lim_t \lim_p \varphi_p (\bm \beta^t, \bm \beta) = \lim_t \lim_p \mathbb{E}[\varphi_p (\eta_{\gamma}(\bm \beta + \tau_t \bm Z; \alpha \tau_t),  \bm \beta )].
\end{align*}

Finally, we show that our claim about \eqref{B.2} holds. Consider the first term in \eqref{B.2}: $\bm \beta^{t+1} = \eta_\gamma (\mathbf{X}^\top \mathbf{z} + \bm \beta^t, \alpha\tau_t)$.

Applying Lemma \ref{berthier} on $\varphi_p (\bm \beta^t + \mathbf{X}^\top \mathbf{z}^t, \bm \beta) = \frac{1}{p} \| \eta_\gamma (\bm \beta^t+\mathbf{X}^\top \mathbf{z}, \alpha\tau_t) \|$ gives
\begin{align}\label{liit}
\lim_p \frac{1}{p} \| \bm \beta^t \|^2 = \lim_p \frac{1}{p} \mathbb{E}\|\eta_\gamma (\bm \beta + \tau_t \mathbf{Z}, \alpha\tau_t)\|^2
\end{align}
for $\mathbf{Z} \sim \mathcal{N}(0, \mathcal{I}_p)$. By property (\textbf{S1}), we also have
\begin{align*}
    \mathbb{E}\|\eta_\gamma (\bm \beta + \tau_t \mathbf{Z}, \alpha\tau_t)\|^2 \leq \mathbb{E} \|\bm \beta + \tau_t \mathbf{Z} \|^2\leq 2 \|\bm \beta \|^2 + 2p\tau_t^2.
\end{align*}
Combining the above inequality and \eqref{liit}, and using Assumption (\textbf{A2}) and Property (\textbf{S2}), we get
\begin{align*}
     \lim_t \lim_p \frac{\| \bm \beta^t \|^2}{p} \leq 2 \mathbb{E}\Pi^2 + 2 \tau_*^2
\end{align*}
which is equivalent to
\begin{align*}
     \lim_t \lim_p \left(\frac{\| \bm \beta^t \|}{\sqrt{2p}}\right)^{k-1} \leq (\mathbb{E}\Pi^2 + \tau_*^2)^{\frac{k-1}{2}}.
\end{align*}

Then it is not hard to see that the second term in \eqref{B.2} is finite from Theorem \ref{theorem1} and the bound on the first term. Lastly, the third term in \eqref{B.2} is finite by Property \textbf{(S2)}.
\end{proof}

We sketch the proof of Theorem \ref{theorem2} here:

\begin{proof}[Proof of Theorem \ref{theorem2}]
Our proof relies on the technical result in \cite{bayati2011lasso} Lemma 3.1 to guarantee that, although the SGL cost function $\mathcal{C}_{\bm X,\bm y}(\cdot)$ is not necessarily strictly convex, we do not encounter the case that the cost of $\mathcal{C}_{\bm X,\bm y}(\bm\beta^t)$ is close to $\mathcal{C}_{\bm X,\bm y}(\bm{\hat\beta})$ yet $\bm\beta^t$ is far from $\bm{\hat\beta}$.

In the LASSO case, nice properties exist when one only considers columns of $\bm X$ corresponding to the non-zero elements in $\bm\beta^t$. Our proof designs specific subgradients of $\|\cdot\|_2$ to relate to the LASSO case in the sense that we also consider the selected elements in $\bm\beta^t$. We see an analogy between the SGL proximal operator and soft-thresholding in \eqref{eta}. Specifically, the SGL proximal operator can be viewed as a scaled or shrinked soft-thresholding. In the LASSO case, the support set has size no larger than $n$, i.e. the sum of indicating functions of being non-zero $\leq n$; in the SLOPE case, the support of equivalence classes has size no larger than $n$, which means the weighted sum of indicator functions is $\leq n$ with each element in the same equivalence class sharing the same weight. We claim that in SGL, similarly to SLOPE, the weighted sum of indicator functions is $\leq n$, though weights depend on the scaling between the SGL proximal operator and the soft-thresholding.
\end{proof}

In order to apply Lemma \ref{berthier}, we need to check the proximal operator of SGL satisfies the following properties, and the verification of these properties is shown in Appendix \ref{Appendix C}:

\begin{itemize}
    \item[\textbf{(S1)}] For each $t$, the proximal operators are uniformly Lipschitz (i.e. uniformly pseudo-
    Lipschitz of order $k = 1$).
    \item[\textbf{(S2)}] For any $s,t$ with $(\mathbf{Z}, \mathbf{Z}')$ a pair of length $p$ vectors such that $(Z_i, Z_i')$ are i.i.d $\mathcal{N}(\mathbf{0},\mathbf{\Sigma})$ for $i \in \{1,2,\cdots, p \}$ where $\mathbf{\Sigma}$ is any $2 \times 2$ covariance matrix, the following limits exist and are finite:
    \begin{equation}
    \begin{split}
    &    \lim_{p \rightarrow \infty} \frac{1}{p} \| \bm \beta \|_2 \\
     &  \lim_{p \rightarrow \infty} \frac{1}{p} \mathbb{E} \left( \bm \beta^T  \eta_\gamma ( \bm \beta + \mathbf{Z}, \alpha \tau_t) \right) \\
     & \lim_{p \rightarrow \infty}  \frac{1}{p} \mathbb{E} \left( \eta_\gamma ( \bm \beta + \mathbf{Z}', \alpha \tau_s)^\top \eta_\gamma ( \bm \beta + \mathbf{Z}, \alpha \tau_t)   \right)
    \end{split}
    \end{equation}
\end{itemize}

Now we check that our proximal operator satisfies the assumptions in \cite{berthier2017state}.

\begin{proof}[Verifying Properties (\textbf{S1}) and (\textbf{S2})]\label{proof 38}
Property (\textbf{S1}) follows from
\begin{align*}
    \|\eta_\gamma (\bm v_1, \alpha \tau_t) - \eta_\gamma (\bm v_2, \alpha \tau_t)   \|_2 \leq \| \eta_{\text{soft}} (\bm v_1, \gamma \alpha \tau_t) - \eta_{\text{soft}} (\bm v_2, \gamma \alpha \tau_t)    \|_2 \leq \| \bm v_1 - \bm v_2  \|_2
\end{align*}
as the SGL proximal operator can be viewed as a shrinked soft-thresholding. Hence, the proximal operators are Lipschitz continuous with Lipschitz constant one. 

To show (\textbf{S2}) holds, we restate Property (\textbf{S2}) here:
for any $s,t$ with $(\mathbf{Z}, \mathbf{Z}')$ a pair of length $p$ vectors such that $(Z_i, Z_i')$ are i.i.d. $\mathcal{N}(\mathbf{0},\mathbf{\Sigma})$ for $i \in \{1,2,\cdots, p \}$ where $\mathbf{\Sigma}$ is any $2 \times 2$ covariance matrix, the following limits exist and are finite:
     \begin{align}
    &    \lim_{p \rightarrow \infty} \frac{1}{p} \| \bm \beta \|_2 \label{D1} \\
     &  \lim_{p \rightarrow \infty} \frac{1}{p} \mathbb{E} \left( \bm \beta^T  \eta_\gamma ( \bm \beta + \mathbf{Z}, \alpha \tau_t) \right) \label{D2} \\
     & \lim_{p \rightarrow \infty}  \frac{1}{p} \mathbb{E} \left( \eta_\gamma ( \bm \beta + \mathbf{Z}', \alpha \tau_s)^\top \eta_\gamma ( \bm \beta + \mathbf{Z}, \alpha \tau_t)   \right). \label{D3}
    \end{align}

The first limit \eqref{D1} can be easily verified by the strong Law of Large Number and Assumption (\textbf{A2}). We need to apply the following two lemmas to prove the other two limits. The first lemma will produce the dominated convergence result, that we use many times to exchange the limit and the expectation.

\begin{lemma}[Doob's $L^1$ maximal inequality, \cite{JLDoob}]
\label{C1}
    Let $X_1, \cdots, X_p$ be a sequence of non-negative i.i.d. random variables such that $\mathbb{E} \{ X_1 \max(0, \log X_1)\} < \infty$. Then
    \begin{align*}
        \mathbb{E}\left\{ \sup_{p \geq 1} \frac{X_1 + \cdots + X_p}{p}   \right\} 
        \leq &\frac{e}{e-1} \left(1 + \mathbb{E}\{X_1 \max(0, \log X_1)) \} \right)
    \end{align*}
    \end{lemma}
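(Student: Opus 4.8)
The plan is to recognize the scaled partial sums as a backward (reverse) martingale and then upgrade the associated weak-type maximal inequality to the stated $L^1$ bound by a layer-cake integration. Write $S_p = X_1 + \cdots + X_p$ and $M_p = S_p/p$, and let $\mathcal{G}_p = \sigma(S_p, X_{p+1}, X_{p+2}, \dots)$. Since $X_1, \dots, X_p$ are exchangeable given $\mathcal{G}_p$, symmetry yields $\mathbb{E}[X_1 \mid \mathcal{G}_p] = S_p/p = M_p$; together with the tower property this makes $(M_p)_{p \ge 1}$ a (reverse) martingale for the decreasing filtration $(\mathcal{G}_p)$, with $M_1 = X_1$.

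First I would fix $N$ and read the finite segment $M_N, M_{N-1}, \dots, M_1$ in order of increasing information, so that it becomes an ordinary non-negative martingale whose terminal variable is $M_1 = X_1$. Doob's maximal inequality in its refined form then gives, for every $\lambda > 0$,
\[
\lambda \, \mathbb{P}\!\left(\max_{1 \le p \le N} M_p \ge \lambda\right) \le \mathbb{E}\!\left[X_1 \, \mathbf{1}\Big\{\max_{1 \le p \le N} M_p \ge \lambda\Big\}\right].
\]
Because $\max_{p \le N} M_p \le \sum_{p \le N} X_p$ is integrable under the hypothesis $\mathbb{E}[X_1\log^+ X_1]<\infty$ (which in particular forces $\mathbb{E} X_1 < \infty$), its expectation may be manipulated freely.

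Next I would pass from this weak-type bound to the $L^1$ bound by integrating in $\lambda$. Writing $M^{(N)} = \max_{p\le N} M_p$ and $\log^+ = \max(0,\log\,\cdot\,)$, the layer-cake formula with a cutoff at level $b = 1$ gives
\[
\mathbb{E}[M^{(N)}] = \int_0^\infty \mathbb{P}(M^{(N)} > \lambda)\,d\lambda \le 1 + \int_1^\infty \frac1\lambda \, \mathbb{E}\big[X_1 \mathbf{1}\{M^{(N)} > \lambda\}\big]\,d\lambda,
\]
and exchanging the order of integration by Fubini produces $\mathbb{E}[M^{(N)}] \le 1 + \mathbb{E}[X_1 \log^+ M^{(N)}]$. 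The final ingredient is the elementary Young-type inequality $a \log^+ c \le a \log^+ a + c/e$, applied with $a = X_1$ and $c = M^{(N)}$; after taking expectations this yields
\[
\mathbb{E}[M^{(N)}] \le 1 + \mathbb{E}[X_1 \log^+ X_1] + \tfrac1e \,\mathbb{E}[M^{(N)}].
\]
Rearranging (dividing by $1-\tfrac1e$) gives $\mathbb{E}[M^{(N)}] \le \frac{e}{e-1}\big(1 + \mathbb{E}[X_1 \log^+ X_1]\big)$, and letting $N \to \infty$ by monotone convergence delivers the claim, including the exact constant $e/(e-1)$, which is precisely the value produced by the choice $b=1$.

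The main obstacle I anticipate is one of rigor rather than ideas: the rearrangement that produces $e/(e-1)$ divides through by $1 - 1/e$ and is legitimate only once the relevant maximum is known to be integrable. This is exactly why I would run the whole estimate for the truncated maximum $M^{(N)}$ (manifestly integrable), obtain a bound uniform in $N$, and only then pass to the limit. A secondary point to check carefully is the exchangeability identity $\mathbb{E}[X_1\mid\mathcal{G}_p] = S_p/p$, since it is what makes the right-hand side of the maximal inequality involve $X_1$ alone—and hence the $L\log L$ norm of a single variable—rather than a growing sum; alternatively, one may replace the reverse-martingale step by Hopf's maximal ergodic theorem for the measure-preserving shift on the i.i.d. sequence, which furnishes the identical weak-type inequality and leaves the rest of the argument unchanged.
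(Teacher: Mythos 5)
Your argument is correct and delivers the exact constant $e/(e-1)$, but it is assembled differently from the paper's proof. The paper also begins by viewing $M_p=(X_1+\cdots+X_p)/p$ as a (reverse) submartingale, but it then invokes Doob's $L\log L$ maximal inequality as a black box to get $\mathbb{E}\{\sup_{p'\ge p\ge 1}M_p\}\le \frac{e}{e-1}(1+\mathbb{E}\{M_{p'}\max(0,\log M_{p'})\})$, reduces $\mathbb{E}\{M_{p'}\max(0,\log M_{p'})\}$ to $\mathbb{E}\{X_1\max(0,\log X_1)\}$ via the convexity of $x\mapsto x\max(0,\log x)$ and conditional Jensen, and finishes with Fatou as $p'\to\infty$. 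You instead exploit that the reversed finite segment $M_N,\dots,M_1$ is a forward martingale whose \emph{terminal} variable is $M_1=X_1$, so the weak-type Doob inequality already places $\mathbb{E}[X_1\mathbf{1}\{\max_{p\le N}M_p\ge\lambda\}]$ on the right, and you then upgrade to the $L^1$ bound by the layer-cake integration with cutoff at $1$ and the Young-type inequality $a\log^+ c\le a\log^+ a+c/e$. What your route buys is a self-contained derivation of the $L\log L$ inequality rather than a citation, the elimination of the Jensen step (the $L\log L$ norm of $X_1$ appears directly), and explicit care about the integrability needed to divide by $1-1/e$, which you handle by truncating at $N$ before passing to the limit; the paper's version is shorter but leans on the quoted inequality and is somewhat loose about which endpoint of the reversed martingale that inequality should be stated at. Both proofs ultimately rest on the same exchangeability identity $\mathbb{E}[X_1\mid\sigma(S_p,X_{p+1},\dots)]=S_p/p$.
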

\begin{proof}[Proof of \Cref{C1}]
    Let $M_p = \frac{1}{p} (X_1 + \cdots + X_p)$. Then the sequence $\{M_p \}$ is a submartingale and hence by Doob's maximal inequality, 
    \begin{align*}
        \mathbb{E} \left\{\sup_{p' \geq p \geq 1} M_p  \right\} 
        \leq &\frac{e}{e-1} \left(1 + \mathbb{E} \{ M_{p'} \max(0, \log M_{p'})\} \right)
    \end{align*}
    Notice that the mapping $x \rightarrow x \max(0, \log x)$ is convex and hence 
    \begin{align*}
        \mathbb{E} \{ M_{p'} \max(0, \log M_{p'}) \} \leq \mathbb{E} \{ X_{1} \max(0, \log X_{1}) \}
    \end{align*}
     The result follows by $\sup_{p' \geq p \geq 1} M_p \uparrow \sup_{p \geq 1} M_p$ as $p' \rightarrow \infty$ and by Fatou's lemma. 
    \end{proof}

The next lemma we need uses the idea from Proposition 1 in \cite{hu2019asymptotics}, that the non-separable proximal operator is indeed asymptotically separable.

\begin{lemma}\label{C2}
For a penalty sequence $\{\lambda(p) \}$, having empirical distribution that weakly converge to a distribution $\Lambda$, there exists a limiting scalar function $h$ such that as $p \rightarrow \infty$,
    \begin{equation} \label{E1}
        \frac{1}{p} \|\eta_\gamma (\bm v (p), \lambda) - h(\bm v(p); \Lambda)  \|^2 \rightarrow 0
    \end{equation}
    where $h$ applies $h(\cdot;\Lambda)$ coordinate-wise to $\bm v(p)$ and $h$ is Lipschitz with constant $1$.
\end{lemma}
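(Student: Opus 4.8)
The plan is to prove Lemma \ref{C2} by explicitly constructing the limiting scalar function $h$ from the SGL proximal operator, exploiting the fact that the operator acts group-wise and that within each group the coordinates become asymptotically decoupled. Recall from \eqref{eq:prox_op} that for a coordinate $j$ in group $l_j$, the proximal operator multiplies the soft-thresholded value $\eta_{\text{soft}}(s^{(j)},\gamma\lambda)$ by the group-dependent shrinkage factor $\bigl(1-(1-\gamma)\lambda\sqrt{p_{l_j}}/\|\eta_{\text{soft}}(\bm s_{l_j},\gamma\lambda)\|_2\bigr)_+$. The key observation is that the only genuinely non-separable ingredient is the group norm $\|\eta_{\text{soft}}(\bm s_{l_j},\gamma\lambda)\|_2/\sqrt{p_{l_j}}$, and under assumptions \textbf{(A2)} and \textbf{(A5)} this quantity concentrates around a deterministic constant as the group size $p_{l_j}\to\infty$. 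First I would fix a group $l$ and, writing $v^{(j)}=\Pi^{(j)}+\tau_t Z^{(j)}$ for the relevant coordinates, apply the strong law of large numbers to show that
\begin{align*}
\frac{1}{p_l}\|\eta_{\text{soft}}(\bm v_l,\gamma\lambda)\|_2^2 \longrightarrow c_l:=\mathbb{E}\bigl[\eta_{\text{soft}}(\Pi+\tau_t Z,\gamma\lambda)^2\bigr]
\end{align*}
almost surely, where the expectation is over a single coordinate. This justifies replacing the random group norm by the deterministic value $\sqrt{c_l\, p_l}$.

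Given this concentration, I would define the candidate separable function coordinate-wise by
\begin{align*}
h(v;\Lambda)^{(j)}=\eta_{\text{soft}}(v^{(j)},\gamma\lambda)\Bigl(1-\tfrac{(1-\gamma)\lambda\sqrt{r_{l_j}}}{\sqrt{c_{l_j}}}\Bigr)_+,
\end{align*}
where $r_{l_j}=\lim p_{l_j}/p$ from \textbf{(A5)} and the positive part encodes the two-branch structure of \eqref{eq:prox_op}. The next step is to control the error $\frac1p\|\eta_\gamma(\bm v,\lambda)-h(\bm v;\Lambda)\|^2$. I would split this sum over groups, and within each group bound the discrepancy between the true shrinkage factor (built from the empirical group norm) and the limiting factor (built from $\sqrt{c_l p_l}$). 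Since both factors multiply the same soft-thresholded vector, the per-group error is controlled by the difference of the two scalar multipliers times $\|\eta_{\text{soft}}(\bm v_l,\gamma\lambda)\|_2^2/p$, and the multiplier difference vanishes by the concentration above; the sum over groups is then handled by dominated convergence, for which Lemma \ref{C1} supplies the uniform integrability needed to exchange limit and expectation under the mild moment condition in \textbf{(A2)}. Finally, the Lipschitz claim follows because $h$ is a coordinate-wise composition of the $1$-Lipschitz soft-thresholding with multiplication by a constant factor in $[0,1]$, exactly as in the argument verifying \textbf{(S1)}.

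The main obstacle I anticipate is handling groups whose size does \emph{not} grow with $p$, or more precisely making the ``asymptotic separability'' rigorous when the shrinkage factor depends on a within-group average that only concentrates for large groups. Assumption \textbf{(A5)} guarantees $p_l/p\to r_l\in(0,1)$, so every group does grow linearly in $p$ and the law of large numbers applies group-by-group; I would lean on this to argue that the number of groups is effectively bounded while each group is large, which is the regime where separability emerges. A secondary subtlety is the non-differentiability of the positive-part threshold: the set of $v$ for which $\|\eta_{\text{soft}}(\bm v_l,\gamma\lambda)\|_2$ sits exactly at the boundary $(1-\gamma)\lambda\sqrt{p_l}$ has vanishing probability under the Gaussian-plus-signal law, so the two branches of \eqref{eq:prox_op} agree with $h$ off a null set in the limit, and the $L^2$ error is unaffected. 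Assembling these pieces yields \eqref{E1}, and the constructed $h$ is the desired limiting scalar function.
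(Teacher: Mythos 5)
Your proposal follows essentially the same route as the paper: the paper's proof is a two-line sketch observing that soft-thresholding is separable and that the group-norm shrinkage factor converges to $1 - (1-\gamma)\lambda/\sqrt{\mathbb{E}\,\eta_{\text{soft}}(S_j,\gamma\lambda)^2}$ (invoking the asymptotic-separability idea of Hu et al.), and you supply exactly the details that sketch omits (the SLLN concentration of the group norm, the per-group error decomposition, and the measure-zero boundary of the positive-part branch). One small slip: your definition of $h$ carries an extraneous factor $\sqrt{r_{l_j}}$ in the numerator of the shrinkage term; since $\|\eta_{\text{soft}}(\bm v_{l_j},\gamma\lambda)\|_2 \approx \sqrt{p_{l_j} c_{l_j}}$, the $\sqrt{p_{l_j}}$ cancels exactly and the limiting factor is $\bigl(1-(1-\gamma)\lambda/\sqrt{c_{l_j}}\bigr)_+$, consistent with your own concentration statement and with the paper.
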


\begin{proof}[Proof of Lemma \ref{C2}]
Recall that the SGL proximal operator in \eqref{eta} takes the form
\begin{align*}
\eta_{\gamma}(\bm s, \lambda)^{(j)} =  \eta_{\text{soft}}(s^{(j)}, \gamma  \lambda)\left(1 - \frac{(1 - \gamma)\lambda \sqrt{p_{l_j}}}{ \| \eta_{\text{soft}}(\bm s_{l_j}, \gamma \lambda) \|_2 }\right).
\end{align*}
We highlight that the soft-thresholding is a separable operator and the scalar term converges to $1 - \frac{(1 - \gamma)\lambda}{ \sqrt{\mathbb{E} \eta_{\text{soft}}(S_j, \gamma \lambda)^2}}$, where $S_j$ is the asymptotic distribution of $\bm s_{l_j}$.
\end{proof}

We apply Lemma \ref{C2} to verify limits \eqref{D2}\eqref{D3}.

First, consider limit \eqref{D2}, by Cauchy-Schwarz, we have:
\begin{equation}
     \frac{1}{p} | \bm \beta^\top \eta_\gamma (\bm \beta + \mathbf{Z}, \alpha \tau_t) - \bm \beta^\top h^t (\bm \beta + \mathbf{Z})  | \leq \frac{\|\bm \beta \|_2}{\sqrt{p}} \frac{\|\eta_\gamma (\bm \beta + \mathbf{Z}, \alpha \tau_t) - h^t (\bm \beta + \mathbf{Z}) \|_2}{ \sqrt{p}}
\end{equation}
    which goes to $0$ by (\ref{D1}) and (\ref{E1}).  This means that the assumptions of Lemma~\ref{C1} are satisfied and it implies that
\begin{align*}
    |\mathbb{E}_{\bm Z} \{ \bm \beta^\top \eta_{\gamma}( \bm\beta + \bm Z, \alpha \tau_t)\} -  \mathbb{E}_{\bm Z} \{\bm \beta^\top h^t( \bm\beta + \bm Z) \}| /  p \rightarrow 0
\end{align*}
as $p \rightarrow \infty.$
Therefore,
\begin{align*}
 \lim_{p\to\infty} \mathbb{E}_{\bm Z} \{\bm \beta^{\top} \eta_{\gamma}( \bm\beta + \bm Z, \alpha \tau_t)\}/p
=\lim_{p\to\infty} \sum_{i=1}^{p}\beta_{0,i}  \mathbb{E}_{Z}\{h^t (\beta_{0,i}+Z_i)\}/p= \mathbb{E}[\Pi^\top h^t(\Pi+Z)],
\end{align*}
where $\Pi, Z$ are univariate.
By the Cauchy-Schwarz inequality, we need $\mathbb{E}[\Pi^2]<\infty$ and $\mathbb{E}[h^t(\Pi+Z)^2]<\infty$ to bound $\mathbb{E}[\Pi h^t(\Pi+Z)]$. Since $\mathbb{E}[\Pi^2] := \sigma_{\bm\beta}^2 <\infty$ is assumed, we focus to show $\mathbb{E}[h^t(\Pi+Z)^2]<\infty$. Indeed, $h^t(\cdot)$ is Lipschitz with constant 1 and therefore $\mathbb{E}[h^t(\Pi+Z)^2]<\mathbb{E}[(\Pi+Z)^2] \leq \mathbb{E}[\Pi^2] + \mathbb{E}[Z^2] = \sigma_{\bm\beta}^2  + \Sigma_{11} < \infty.$

Finally consider the last limit \eqref{D3} similarly. We appeal to Lemma~\ref{C1} which requires that
\begin{equation}
\frac{1}{p}|\eta_{\gamma}(\bm\beta + \mathbf{Z}', \alpha \tau_s)^\top \eta_{\gamma}(\bm\beta + \mathbf{Z}, \alpha \tau_t)  - h^s(\bm\beta + \mathbf{Z}')^\top h^t(\bm\beta + \mathbf{Z})| 
\label{eq:P2_res2}
\end{equation}
goes to $0$ as $p \rightarrow \infty$.

Now we rigorously prove \eqref{eq:P2_res2}.  
By repeated applications of the Cauchy-Schwarz inequality, we find
\begin{align*}
&\lim_{p \rightarrow \infty} |\eta_{\gamma}(\bm\beta + \mathbf{Z}', \alpha \tau_s)^\top \eta_{\gamma}(\bm\beta + \mathbf{Z}, \alpha \tau_t) - h^s(\bm\beta + \mathbf{Z}')^\top h^t(\bm\beta + \mathbf{Z})| \\
&\leq \lim_{p \rightarrow \infty} \| h^s(\bm\beta + \mathbf{Z}')\|  \| \eta_{\gamma}(\bm\beta + \mathbf{Z}, \alpha \tau_t)   -  h^t(\bm\beta + \mathbf{Z})\|  \\
&+  \lim_{p \rightarrow \infty} \| h^t(\bm\beta + \mathbf{Z}) \| \|  \eta_{\gamma}(\bm\beta + \mathbf{Z}',\alpha \tau_s)   -  h^s(\bm\beta + \mathbf{Z}') \| \\
& + \lim_{p \rightarrow \infty} \| \eta_{\gamma}(\bm\beta + \mathbf{Z}',\alpha\tau_s) - h^s(\bm\beta + \mathbf{Z}') \|   \| \eta_{\gamma}(\bm\beta + \mathbf{Z},\alpha \tau_t) - h^t(\bm\beta + \mathbf{Z}) \|.
\end{align*}
We claim that \eqref{eq:P2_res2} converges to 0 as the result of both
\begin{align*}
  &\|   \eta_{\gamma}(\bm\beta + \mathbf{Z}',\alpha \tau_s) - h^s(\bm\beta + \mathbf{Z}') \| /\sqrt{p} \rightarrow 0, 
\\
 &\|    \eta_{\gamma}(\bm\beta + \mathbf{Z},\alpha \tau_t)   -  h^t(\bm\beta + \mathbf{Z}) \| /\sqrt{p} \rightarrow 0
\end{align*}
and
\begin{align*}
\|h^s(\bm\beta+\bm Z'\|/p\to 0, \|h^t(\bm\beta+\bm Z\|/p\to 0.
\end{align*}

While the first pair of convergence results follow from Lemma ~\ref{C2}, the second pair follow from the fact that $h^s(\cdot)$ and $h^t(\cdot)$ are separable and from the Law of Large Numbers,
\begin{equation*}
\begin{split}
&\lim_{p} \norm{h^s(\bm\beta + \mathbf{Z}')}_2^2/p = \lim_{p}  \sum_{i=1}^{p} [h^s(\beta_{i} + Z'_i)]^2/p 
\\
= &\mathbb{E}[(h^s(\Pi + Z'))^2]   \leq \mathbb{E}[(\Pi + Z')^2]
\leq \sigma_{\bm\beta}^2 + \Sigma_{22} < \infty, \\
& \lim_{p} \norm{h^t(\bm\beta + \mathbf{Z})}^2/p = \lim_{p}\sum_{i=1}^{p} [h^t(\beta_{i} + Z_i)]^2/p 
\\
= &\mathbb{E}[(h^t(\Pi + Z))^2]   \leq \mathbb{E}[(\Pi + Z)^2] \leq \sigma_{\bm\beta}^2 + \Sigma_{11}< \infty.
\end{split}
\end{equation*}

Next, \eqref{eq:P2_res2} allows Lemma~\ref{C1} to imply, 
\begin{align*}
 & \lim_{p\to\infty} \mathbb{E}_{\bm Z, \bm Z'}\{\eta_{\gamma}(\bm\beta + \mathbf{Z}', \alpha \tau_s)^\top \eta_{\gamma}(\bm\beta + \mathbf{Z},\alpha \tau_t)\}/p \\ 
 &=\lim_{p\to\infty} \sum_{i=1}^{p} \mathbb{E}_{\bm Z, \bm Z'}\{h^s(\beta_{i} + Z'_i) h^t(\beta_{i} + Z_i) \}/p  \\
 &= \mathbb{E}[h^s(\Pi+Z')^\top h^t(\Pi+Z)],
\end{align*}
where $\Pi, Z',$ and $Z$ are univariate. By the Cauchy-Schwarz inequality and the Lipschitz property, we find
\begin{align*}
&\Big(\mathbb{E}[h^s(\Pi+Z')^\top h^t(\Pi+Z)]\Big)^2 
\leq  \mathbb{E}[(h^s(\Pi+Z'))^2] \mathbb{E}[(h^t(\Pi+Z))^2] 
\leq \mathbb{E}[(\Pi+Z')^2] \mathbb{E}[(\Pi+Z)^2] \\
= & ( \mathbb{E}[\Pi^2]+ \mathbb{E}[Z'^2] )(\mathbb{E}[\Pi^2]+ \mathbb{E}[Z^2] ) 
= (\sigma_{\bm\beta}^2+\Sigma_{22})(\sigma_{\bm\beta}^2+\Sigma_{11}) <\infty
\end{align*}
which leads to the boundedness of \eqref{D3}.
\end{proof}

\section{Analysis of State Evolution} \label{Appendix D}
\label{app:se}

\begin{proof}[Proof of Proposition \ref{prop:F}]
The proof structure is significantly extended from Proposition 1.3 in \cite{bayati2011lasso}. For the sake of brevity, we consider the case where only one group exists and claim the proof can be easily generalized to the multiple group case since each group evolves independently. 

The main focus is to show
that $\mathbf{F}_\gamma (\tau^2, \alpha \tau):= \sigma_{\bm w}^2 + \frac{1}{\delta p} \mathbb{E}\|\eta_{\gamma}(\mathbf{\Pi} + \tau \bm Z, \alpha \tau) - \mathbf{\Pi} \|_2^2$ is concave in $\tau^2$. Equivalently, we show that $\mathbb{E} \|\eta_{\gamma}(\bm \Pi + \tau \bm Z,  \alpha \tau) - \bm \Pi \|^2/p$ is concave.

Expanding the SGL proximal operator, we have
\begin{align*}
   &\qquad\mathbb{E} \|\eta_{\gamma}(\bm \Pi + \tau \bm Z,  \alpha \tau) - \bm \Pi \|^2  \\
    &= \mathbb{E}\left\|\bm\eta_{\text{soft}} \cdot\left(1 - \frac{(1 - \gamma)\alpha\tau \sqrt{p_l}}{ \| \bm\eta_{\text{soft}}\|_2 }\right) - \bm \Pi \right\|^2
\end{align*}
with $\bm\eta_{\text{soft}}\in\mathbb{R}^{p_l}$ denoting $\eta_{\text{soft}}(\bm \Pi + \tau \bm Z, \gamma  \alpha \tau)$.

By the Law of Large Numbers, almost surely,
\begin{align*}
    \frac{\sqrt{p_l}}{||\bm\eta_{\text{soft}}||_2}
    \to \frac{1}{\sqrt{ \mathbb{E}\eta_{\text{soft}}\left(\Pi + \tau Z, \gamma \alpha \tau \right)^2    }}.
\end{align*}

Since the soft-thresholding is separable, it suffices to show that
$$\mathbb{E}\left(\eta_{\text{soft}} \cdot\left(1 - \frac{(1 - \gamma)\alpha\tau }{ \sqrt{\mathbb{E}\eta_{\text{soft}}^2}}\right) -\Pi\right)^2$$
is concave. Simple binary expansion gives
\begin{align*}
&\quad\mathbb{E}\left(\eta_{\text{soft}} \cdot\left(1 - \frac{(1 - \gamma)\alpha\tau }{ \sqrt{\mathbb{E}\eta_{\text{soft}}^2}}\right) -\Pi\right)^2  =\mathbb{E}\Bigg[(\eta_{\text{soft}} - \Pi  )^2- \frac{2(1-\gamma)\alpha \tau \eta_{\text{soft}}}{\sqrt{\mathbb{E}\eta_{\text{soft}}^2} }(\eta_{\text{soft}}-\Pi) + \frac{(1-\gamma)^2\alpha^2 \tau^2 \bm\eta_{\text{soft}}^2}{\mathbb{E}\eta_{\text{soft}}^2}\Bigg]
  \\
  & = \mathbb{E} (\eta_{\text{soft}}- \Pi  )^2+2(1 - \gamma) \alpha\tau \Pi \frac{\mathbb{E}\eta_{\text{soft}}}{\sqrt{\mathbb{E}\eta_{\text{soft}}^2}} - 2(1-\gamma)\alpha\tau \sqrt{\mathbb{E}\eta_{\text{soft}}^2}    + (1 - \gamma)^2 \alpha^2 \tau^2 .
\end{align*}

It has been shown in \cite{bayati2011lasso} that $\mathbb{E}\left(\eta_{\text{soft}}(\Pi + \tau Z, \gamma \alpha \tau) - \Pi \right)^2$
is concave. Additionally, the last term $(1-\gamma)^2\alpha^2\tau^2$ is linear (hence concave) in terms of $\tau^2$. Therefore, it remains to prove that
\begin{align*}
    H_\gamma (\tau^2):=\tau\Pi\frac{\mathbb{E}\eta_{\text{soft}}  }{\sqrt{\mathbb{E}\eta_{\text{soft}}^2}  } - \tau \sqrt{\mathbb{E}\eta_{\text{soft}}^2}
\end{align*}
is concave in $\tau^2$ for any $\Pi$.

We exhibit the explicit formulae for the expectation terms by viewing the soft-thresholding with Gaussian input as a truncated normal distribution. With some calculus, the moments of the truncated normal distribution give
\begin{align*}
  & \qquad \mathbb{E}\left( \eta_{\text{soft}}(\Pi + \tau Z, \gamma \alpha \tau)  \right)^2  
  \\
  =&\tau^2\Bigg[\left(1 + \left(\frac{-\Pi + \gamma \alpha \tau}{\tau}\right)^2\right) \Phi\left(\frac{\Pi - \gamma \alpha \tau}{\tau}\right)  - \left(\frac{-\Pi + \gamma \alpha \tau}{\tau}\right)\phi\left(\frac{-\Pi + \gamma \alpha \tau}{\tau}\right) \\
  &\quad\quad+ \left(1 + \left(\frac{\Pi + \gamma \alpha \tau}{\tau}\right)^2\right) \Phi\left(\frac{-\Pi - \gamma \alpha \tau}{\tau}\right) - \left(\frac{\Pi + \gamma \alpha \tau}{\tau}\right) \phi\left(\frac{\Pi + \gamma \alpha \tau}{\tau}\right)\Bigg]
\end{align*}
and
\begin{align*}
    &\qquad \mathbb{E}\eta_{\text{soft}}(\Pi + \tau Z, \gamma \alpha \tau) \\
    & = \tau\Bigg[\left(\frac{\Pi - \gamma \alpha \tau}{\tau}\right)\Phi\left(\frac{\Pi - \gamma \alpha \tau}{\tau}\right) + \phi\left(\frac{-\Pi + \gamma \alpha \tau}{\tau}\right) 
    \\
    &-  \left(\frac{-\Pi - \gamma \alpha \tau}{\tau}\right)\Phi\left(\frac{-\Pi - \gamma \alpha \tau}{\tau}\right) - \phi\left(\frac{-\Pi - \gamma \alpha \tau}{\tau}\right)    \Bigg]
\end{align*}

We discuss two cases conditioned on $\Pi$, corresponding to whether the signal is null or not. When $\Pi = 0$, we have:
\begin{align*}
    H_\gamma (\tau^2) = - \tau \sqrt{\mathbb{E}\eta_{\text{soft}}^2(\tau Z, \gamma \alpha \tau) }  
    = -\tau^2  [ 2(1 + \gamma^2 \alpha^2) \Phi(-\gamma \alpha) - 2\gamma \alpha\phi(\gamma \alpha)]^{1/2}
\end{align*}
which is linear in $\tau^2$ and thus concave.

When $\Pi \neq 0 $, we consider the one-side case for simplicity by assuming that the signal is non-negative and claim that the other side holds by symmetry. In particular, the corresponding one-side soft-thresholding is $\Tilde{\eta}_{\text{soft}}(x;b)=\max(x-b, 0)$. 

Denote $\Tilde{H}_\gamma$ as the one side case of $H_\gamma$, let $c = \frac{-\Pi + \gamma \alpha \tau}{\tau}$ so that equivalently $\tau=\frac{\Pi}{ \gamma \alpha - c}$, we have
\begin{align*}
    \Tilde{H}_\gamma (c) = \frac{1}{2} \frac{\Pi^2}{ \gamma \alpha - c} \frac{T'(c)}{\sqrt{T(c)}} - \left(\frac{\Pi}{ \gamma \alpha - c}\right)^2 \sqrt{T(c)}
\end{align*}
where $T(z)=(1+z^2)\Phi(-z)-z\phi(z)$ is as defined in Proposition \ref{prop:F} and it is easy to see $\tau^2 T(c)=\mathbb{E}\left(\Tilde{\eta}_{\text{soft}}(\Pi+\tau Z, \gamma\alpha\tau)\right)^2$ by looking at the second moment of the truncated normal distribution.

Then by the chain rule, we obtain
\begin{align*}
    \frac{d \Tilde{H}_\gamma}{d \tau^2} &= \frac{d\Tilde{H}_\gamma}{dc}\cdot \frac{dc}{d \tau}/\frac{d\tau^2}{d\tau} = \Bigg[\frac{-\Pi^2}{(\gamma \alpha)^2 - c} \left( \frac{T'(c)}{\sqrt{T(c)}} + \frac{1}{2} \frac{1}{\sqrt{T(c)}} \right) + \frac{\Pi^2}{ \gamma \alpha - c}\left(\frac{T''(c)}{\sqrt{T(c)}} - \frac{T'(c)}{2(T(c))^{3/2}} \right) \\
    &+ \frac{\Pi^2}{2(\gamma \alpha - c)^3}\sqrt{T(c)} \Bigg] \frac{\Pi}{2 \tau^3} = - \tau^2 \sqrt{T(\frac{- \Pi + \gamma \alpha \tau}{ \tau})}
\end{align*}
which decreases as $\tau^2$ increases, leading to
\begin{align*}
    \frac{d^2\Tilde{H}_\gamma}{d (\tau^2)^2} < 0 
\end{align*}
Hence $\Tilde{H}_\gamma$, as well as $H_\gamma$, is concave. Therefore, $\mathbf{F}_{\gamma}$ is concave in terms of $\tau^2$ by the previous argument. 

Employing the concavity, we can derive the following convergence results. Notice that for all $\alpha \geq 0$,
\begin{align*}
\lim_{\tau^2 \rightarrow \infty} \frac{d \mathbf{F}_{\gamma}}{d\tau^2} 
= \frac{1}{\delta} \Big( 2T(\gamma \alpha) - 2(1- \gamma)\alpha \sqrt{2T(\gamma \alpha)} + (1-\gamma)^2 \alpha^2 \Big)
=\frac{1}{\delta} \Big( \sqrt{2T(\gamma \alpha)} - (1- \gamma)\alpha \Big)^2> 0
\end{align*}
where the non-negativity of $T(\gamma\alpha)$ is guaranteed by observing that $T'(z)=2z\Phi(-z)-2\phi(z)<0$ and $T(\infty)=0$. 

From the fact that $\frac{d \mathbf{F}_{\gamma}}{d\tau^2}>0$ for large $\tau$ and that $\mathbf{F}_\gamma$ is concave, it is clear that $\mathbf{F}_\gamma$ is increasing everywhere.

Clearly $\mathbf{F}_\gamma(\tau^2,\alpha\tau)=\sigma_w^2>\tau^2=0$ when $\tau=0$. Also for $ \alpha\in\mathcal{A}$, we have $\lim_{\tau^2 \rightarrow \infty} \frac{d \mathbf{F}_{\gamma}}{d\tau^2}<1$ by the definition of $\mathcal{A}$. In other words, 
$\mathbf{F}_\gamma (\tau^2, \alpha \tau) > \tau^2$ for $\tau^2$ small enough and $\mathbf{F}_\gamma (\tau^2, \alpha \tau) < \tau^2$ for $\tau^2$ large enough. Hence the state evolution has at least one solution. Furthermore, since $\mathbf{F}_{\gamma}$ is concave and increasing in terms of $\tau^2$, the solution must be unique and $\tau_t^2 \rightarrow \tau_*^2$.

\end{proof}

\section{Analysis of Calibration} \label{Appendix E}
\label{app:cali}

\begin{proof}[Proof of Lemma \ref{lemma:3.2}]
Our proof is within group level, and since all groups are disjoint, this can be extended to whole level. Let $\hat{\bm{\beta}}_l  = \eta_\gamma(\mathbf{r},\theta_*)_l$, denote $w_l =  \frac{1}{\delta} \langle \eta_\gamma'((\mathbf{X}^*\hat{\mathbf{z}} + \hat{\bm{\beta}})_l, \theta_*) \rangle, w =  \frac{1}{\delta} \langle \eta_\gamma'(\mathbf{X}^*\hat{\mathbf{z}} + \hat{\bm{\beta}}, \theta_*) \rangle$, then by the definition of $\eta_\gamma$, for simplicity assume $\hat{\bm{\beta}}_\ell\neq \mathbf{0}$, we have:
\begin{align*}
    \eta_{\text{soft}}(\mathbf{r}, \gamma  \theta_*)_l \left(1 - \frac{(1 - \gamma)\theta_* \sqrt{p_{l}}}{ || \Psi({l},\mathbf{r}, \gamma \theta_*) ||_2 } \right) = \hat{\bm{\beta}}_l
\end{align*}
Let $v(\hat{\bm{\beta}}_l) \in \partial ||\hat{\bm{\beta}}_l ||_1$, then:
\begin{equation} \label{(3.2)}
    \left(1 - \frac{(1 - \gamma)\theta_* \sqrt{p_l} }{||\mathbf{r}_l + v(\hat{\bm{\beta}}_l) \gamma \theta_*||_2}\right) (\mathbf{r}_l + v(\hat{\bm{\beta}}_l) \gamma \theta_*) = \hat{\bm{\beta}}_l
\end{equation}
which implies
\begin{align*}
    ||\mathbf{r}_l + v(\hat{\bm{\beta}}_l) \gamma \theta_*||_2 = ||\hat{\bm{\beta}}_l||_2 / \left( 1 - \frac{(1 - \gamma)\theta_* \sqrt{p_l} }{||\mathbf{r}_l + v(\hat{\bm{\beta}}_l) \gamma \theta_*||_2} \right)
\end{align*}
then $||\mathbf{r}_l + v(\hat{\bm{\beta}}_l) \gamma \theta_*||_2 = ||\hat{\bm{\beta}}_l|| + (1 - \gamma) \theta_* \sqrt{p_l}$.
Plug in back to (\ref{(3.2)}), we have:
\begin{equation}
    \mathbf{r}_l - \hat{\bm{\beta}}_l = \theta_* \left[ \frac{(1 - \gamma)\sqrt{p_l}}{||\hat{\bm{\beta}}_l||_2} \hat{\bm{\beta}}_l + v(\hat{\bm{\beta}}_l) \gamma    \right]
\end{equation}
 is the subgradient of $ \theta_* \left((1 - \gamma) \sum_{l=1}^L \sqrt{p_l} ||\hat{\bm{\beta}}_l||_2 +  \gamma  ||\hat{\bm{\beta}}||_1\right)$. Hence, by the fixed point condition:
\begin{equation}
    \hat{\bm{\beta}} = \eta_\gamma(\mathbf{X}^\top \mathbf{z} + \hat{\bm{\beta}}, \theta_*)
\end{equation}
\begin{equation}
    \hat{\mathbf{z}} = \mathbf{y} - \mathbf{X}\hat{\bm{\beta}} + \mathbf{z}w.
\end{equation}
Set $\mathbf{r}_l = (\mathbf{X}^\top \mathbf{z})_l + \hat{\bm{\beta}}_l$, we have: $(\mathbf{X}^\top \mathbf{z})_l = \mathbf{r}_l - \hat{\bm{\beta}}_l$. 
Hence
\begin{equation}
  \left(    \mathbf{X}^\top (\mathbf{y} - \mathbf{X}\hat{\bm{\beta}}) \right)_l = (\mathbf{X}^\top \hat{\mathbf{z}})_l(1 - w_l) = (\mathbf{r}_l - \hat{\bm{\beta}}_l)   (1 -w_l ) 
\end{equation}
Comparing with the stationarity condition for the SGL cost function \eqref{cost}, we obtain.
\begin{align*}
    \lambda = \theta_* \left(1 - \frac{1}{\delta} \langle \eta_\gamma'(\mathbf{X}^\top \hat{\mathbf{z}} + \hat{\bm{\beta}}, \theta_*) \rangle   \right)
\end{align*}
\end{proof}

\begin{proof}[Proof of Proposition \ref{prop:bijective}]
The proof can be adapted from the LASSO case as in \cite{donoho2011noise}. When $\alpha \in \mathcal{A}$ where $\mathcal{A}$ is defined in Proposition \ref{prop:F}, $\tau_*^2$ is defined via 
\begin{align*}
    \tau_*^2 = \mathbf{F}_\gamma (\tau_*^2, \alpha \tau_*).
\end{align*}

Since $(\tau^2, \alpha) \rightarrow \mathbf{F}_\gamma (\tau^2, \alpha \tau)$ is continuously differentiable and $0 \leq \frac{d\mathbf{F}_\gamma}{d\tau^2}(\tau_*^2, \alpha \tau_*) < 1$ by Proposition \ref{prop:F}. Hence, we claim that $\tau_*^2(\alpha)$ is continuously differentiable in $\alpha$, which can be verified by applying the implicit function theorem to the function $(\tau^2, \alpha) \rightarrow \tau^2 - \mathbf{F}_\gamma (\tau^2, \gamma \alpha)$, whose derivative with respect to $\tau^2$ is bounded in $(0,1)$. 

Next, we denote $\alpha_{\min} = \min ( \mathcal{A})$ and $\alpha_{\max} = \max(\mathcal{A})$.

We show that $\tau_*^2(\alpha) \rightarrow \infty$ as $\alpha \downarrow \alpha_{\min}$. To see this, denote $\mathbf{F}_\infty' := \lim_{\tau^2 \rightarrow \infty} \frac{d\mathbf{F}_\gamma}{d\tau^2}(\tau^2, \alpha \tau) $. Then by concavity, 
\begin{align*}
    \tau_*^2 \geq \mathbf{F}_\gamma(0,0) + \mathbf{F}_\infty' \tau_*^2,
\end{align*}
or equivalently $\tau_*^2 \geq \mathbf{F}_\gamma (0,0)/ (1 - \mathbf{F}_\infty')$. As $\mathbf{F}_\gamma (0,0) \geq \sigma_w^2$ is bounded below and $\mathbf{F}_\infty' \uparrow 1$ as $\alpha \downarrow \alpha_{\min}$ (see the proof of Proposition \ref{prop:F}), the claim follows.

Now we define a function $q(\alpha, \tau^2)$ as:
\begin{align*}
    \alpha \tau \left[ 1 - \frac{1}{\delta} \mathbb{P} \left(|\Pi + \tau Z| > \gamma \alpha \tau  \right) \left(1 -  \frac{(1 - \gamma) \alpha }{\sqrt{G(z)} }            \right)             \right]
\end{align*}
such that where $G(z) = T(z) + T(-z + 2\gamma \alpha)$ with $T(z) $ being defined as in Proposition \ref{prop:F}, and $z := (\Pi + \gamma\alpha\tau )/\tau$. We note that the formula above corresponds to the asymptotic calibration $\lambda(\alpha) = g(\alpha, \tau_*^2(\alpha))$ and $G(z)$ is defined so as to denote $\mathbb{E}_Z[\eta_\text{soft}(\Pi+\tau Z;\gamma\alpha\tau)^2]/\tau^2$. We leave the detailed derivation as calculus exercises for interested readers.

It is not hard to see $q$ is continuously differentiable and therefore $\alpha \rightarrow \lambda(\alpha)$ is also continuously differentiable.

Define $z_*:=(\Pi+\gamma\alpha\tau_*)/\tau_*$ and
\begin{align*}
    l_\gamma(\alpha) := 1 - \frac{1}{\delta} \mathbb{P} (|\Pi + \tau_* Z| > \gamma \alpha \tau_*) \left(1 -  \frac{(1 - \gamma) \alpha }{\sqrt{G(z_*)}}\right).
\end{align*}
We have seen $\tau_* \rightarrow +\infty$ as $\alpha\rightarrow\alpha_{\min}$, and
\begin{align*}
    l_\gamma^* = \lim_{\alpha \rightarrow \alpha_{\min}^+} l_\gamma (\alpha)  = 1 - \frac{1}{\delta} \mathbb{P} (| Z| > \gamma \alpha_{\min} ) \left(1 -  \frac{(1 - \gamma) \alpha }{\sqrt{G(z_*)} }\right),
\end{align*}
also
\begin{align*}
    \lim_{\alpha \downarrow \alpha_{\min}}  \frac{1}{\sqrt{G(z)} }
    =\frac{1}{\sqrt{G(\gamma\alpha_{\min})} } = \frac{1}{\sqrt{2T(\gamma \alpha_{\min})}} 
\end{align*}
hence, 
\begin{align*}
    l_\gamma^* = 1 - \frac{2}{\delta} \Phi(-\gamma\alpha_{\min})\left(1 - \frac{(1-\gamma) \alpha_{\min}}{\sqrt{2T(\gamma \alpha_{\min})}}\right).
\end{align*}
We would show $l_\gamma^*<0$, which is equivalent to
\begin{align*}
\delta\leq 2\Phi(-\gamma\alpha_{\min})\left(1-\frac{(1-\gamma)\alpha_{\min}}{\sqrt{2T(\gamma \alpha_{\min}) }}\right).
\end{align*}

Recall from the definition of $\mathcal{A}(\gamma)$ in Proposition \ref{prop:F} that
\begin{align*}
\left(\sqrt{2T(\gamma \alpha_{\min})}-(1-\gamma)\alpha_{\min}\right)^2=\delta.
\end{align*}
It remains to show that, by suppressing the argument of $T$,
\begin{align*}
\sqrt{2T}\left(1-\frac{(1-\gamma)\alpha_{\min}}{\sqrt{2T}}\right)\left(\sqrt{2T}-(1-\gamma)\alpha_{\min}\right)
&\leq 2\Phi(-\gamma\alpha_{\min})\left(1-\frac{(1-\gamma)\alpha_{\min}}{\sqrt{2T}}\right)  
\\
\Longleftrightarrow
\sqrt{2T}\left(\sqrt{2T}-(1-\gamma)\alpha_{\min}\right)
&\leq 2\Phi(-\gamma\alpha_{\min})
\\
\Longleftrightarrow 2T(\gamma\alpha_{\min})&\leq 2\Phi(-\gamma\alpha_{\min})
\end{align*}
and the last inequality holds by observing that $z \Phi(-z) \leq \phi (z)$ for all $z > 0$.   Therefore, 
\begin{align*}
    \lim_{\alpha \downarrow \alpha_{\min}} \lambda(\alpha) =l_\gamma^*\alpha_{\min}     \lim_{\alpha \downarrow \alpha_{\min}} \tau(\alpha)= -\infty.
\end{align*}
Finally, consider $\alpha \uparrow \alpha_{\max}$ and we can see $\tau_*(\alpha)$ is bounded, leading to a finite upper bound of $\lambda$ which we denote as $\lambda_{\max}$.
\end{proof}

\begin{proof}[Proof of Corollary \ref{cor:monotone}]
Combining with Proposition \ref{prop:bijective}, we only need to show the uniqueness of $\alpha \in \mathcal{A}$ such that $\lambda(\alpha) = \lambda$. We prove this by contradiction and assume that there are $\hat\alpha_1\neq\hat\alpha_2$ such that $\lambda(\hat\alpha_1)=\lambda(\hat\alpha_2)=\lambda$.

We apply Theorem \ref{theorem1} to both choices $\alpha(\lambda) = \hat{\alpha}_1$ and $\alpha(\lambda) = \hat{\alpha}_2$ and use different test functions. We first use $\varphi(x,y) = (x - y)^2$ to obtain:
\begin{align*}
    \lim_{p \rightarrow \infty} \frac{1}{p} ||\hat{\bm \beta} - \bm \beta||^2 &= \mathbb{E} [\eta_\gamma(\Pi + \tau_*Z, \alpha \tau_*) - \Pi ]^2 = \delta(\tau_*^2 - \sigma^2).
\end{align*}
As $\hat{\bm\beta}$ only depends on $\lambda$ but not on $\hat\alpha_i$, we must have $\tau_*(\hat{\alpha}_1)=\tau_*(\hat{\alpha}_2)$.

Next, we apply Theorem \ref{theorem1} to $\varphi(x,y) = |x|$:
\begin{align*}
    \lim_{p \rightarrow \infty} \frac{1}{p} ||\hat{\bm \beta}||_1 = \mathbb{E} |\eta_\gamma(\Pi + \tau_* Z, \alpha \tau_*) |.
\end{align*}

For fixed $\tau_*$, it is obvious that the function $\mathbb{E}|\eta_\gamma(\Pi + \tau_* Z, \theta)|$ is strictly decreasing in $\theta$. Therefore, by looking at the thresholds, we conclude that $\hat{\alpha}_1 \tau_*(\hat{\alpha}_1) = \hat{\alpha}_2 \tau_*(\hat{\alpha}_2)$ and consequently $\hat{\alpha}_1 = \hat{\alpha}_2$. 
\end{proof}




\section{Extensions of SGL AMP} \label{Appendix G}
\label{app:extension}

As we have discussed in Section \ref{extensions}, the `i.i.d. Gaussian' assumption of the design matrix $\bm X$ may be relaxed. Here we explain the possibility of the relaxation in details.

\paragraph{AMP may work on i.i.d. non-Gaussian matrix}
We demonstrate two non-Gaussian design matrices for which AMP works. One example is a sub-Gaussian matrix following i.i.d. $\pm 1$ Bernoulli distribution (see Figure \ref{fig:iidBern,iidExp} (a)); the other example is a sub-exponential but not sub-Gaussian matrix following i.i.d. shifted exponential distribution (see Figure \ref{fig:iidBern,iidExp} (b)).

\begin{figure}[!htb]
\centering
\vspace{-0cm}
\hspace{-3cm}
\subfigure[]{\includegraphics[width=7cm]{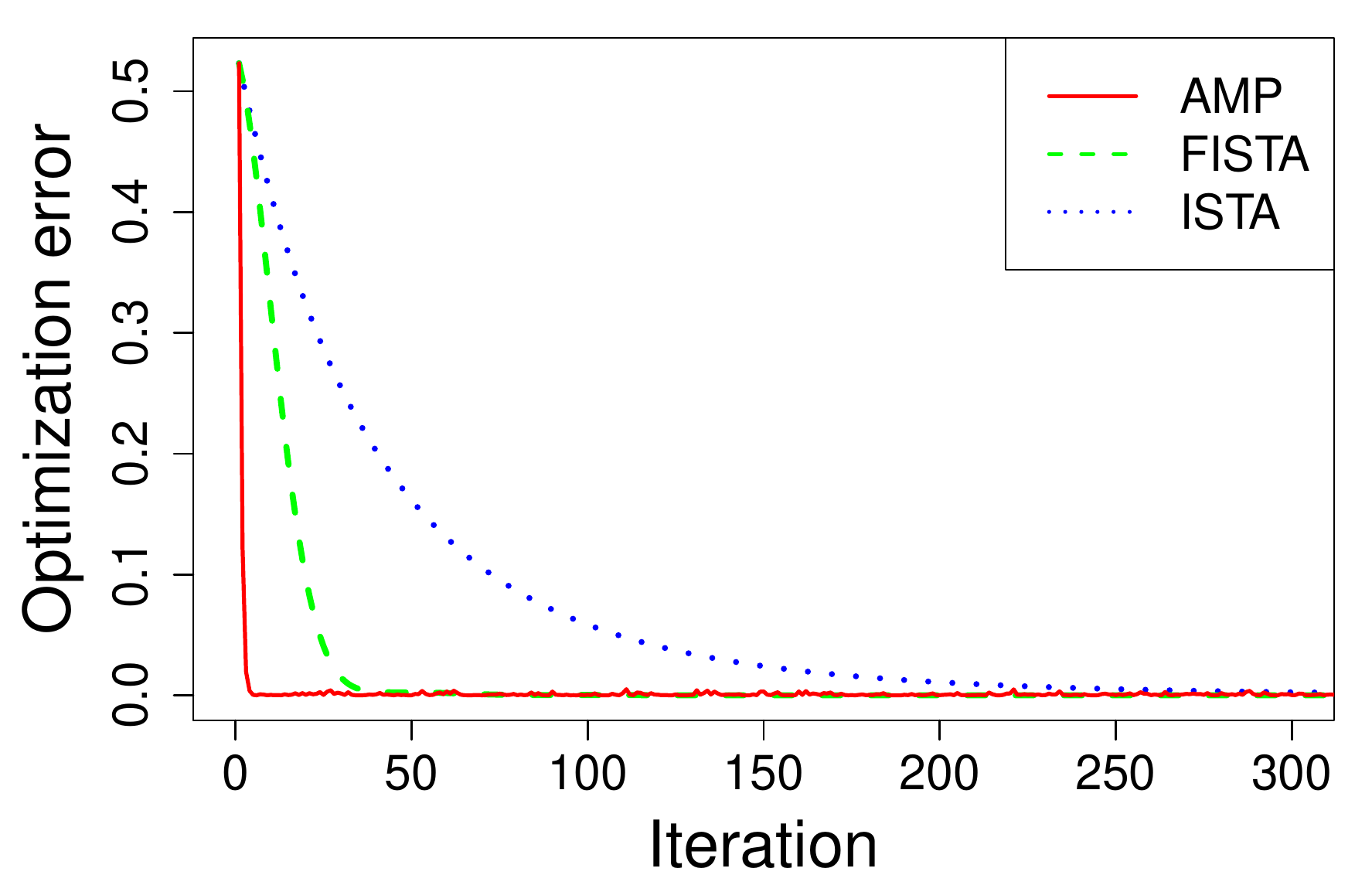}}
\subfigure[]{\includegraphics[width=7cm]{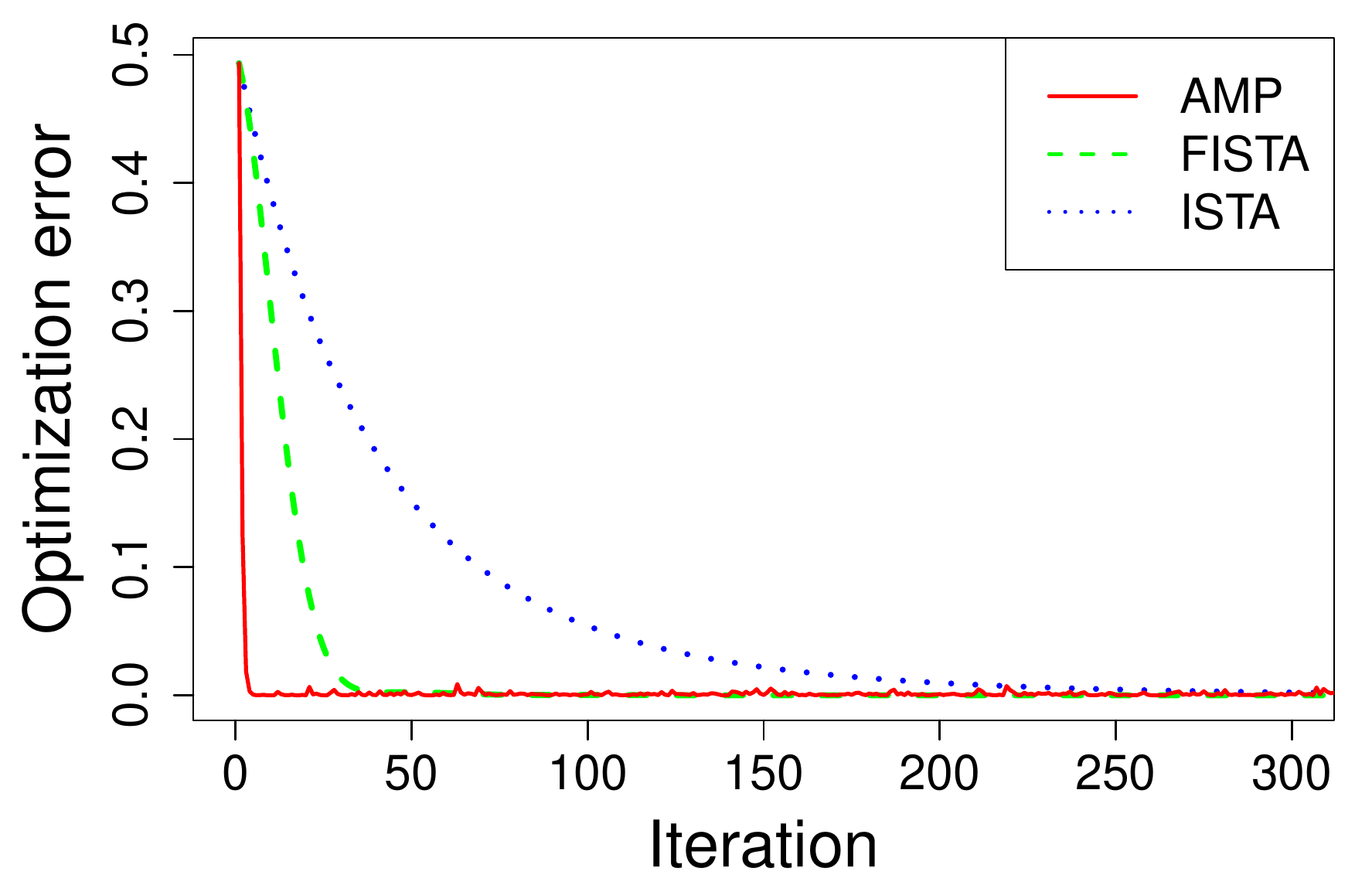}} \hspace{-3.2cm}
\vspace{-0.3cm}
\caption{(a) Same settings as in Figure \ref{fig:MSE1} except  the entries of $\mathbf{X}$ are i.i.d. scaled Bernoulli with mean $0$ and variance $\frac{1}{2000}$, i.e. $X_{ij}$ takes values in $\frac{1}{\sqrt{2000}}$ and -$\frac{1}{\sqrt{2000}}$ with equal probabilities. (b) Same settings as in Figure \ref{fig:MSE1} except the entries of $\mathbf{X}$ are i.i.d. shifted exponential with mean $0$ and variance $\frac{1}{2000}$: the probability density function is $\sqrt{n}e^{-\sqrt{n}x-1}$.}
    \label{fig:iidBern,iidExp}
\end{figure}

\paragraph{VAMP may work on non-i.i.d. matrix}
We present the SGL VAMP, modified from the LASSO VAMP by replacing the soft-thresholding with the SGL proximal operator. The empirical performance of VAMP is shown in Figure \ref{fig:vamp} . We note that certain techniques have be developed to help the algorithm converge. In this work, we adopt the damping technique \cite{rangana2019convergence,vila2015adaptive} with $\mathcal{D}=0.1$ for experiments.

\begin{algorithm}[H]
\centering
 \caption{SGL Vector AMP (VAMP)}
 \label{alg:vamp}
\begin{algorithmic}
\STATE{\textbf{Input}: Data matrix $\mathbf{X}$, response $\bm y$, group information $\bm g$, number of iterations $T$, damping constant $\mathcal{D}$.}
 \FOR{$t$ in $1:T$}
 \STATE{$\bm\beta^t = \left(\bm X^\top\bm X+\rho^t \mathcal{I}_p\right)^{-1}\left(\bm X^\top\bm y+\bm u^t\right)$
 \\
 $\sigma_\beta^t=\frac{1}{n}\textup{tr}\left(\bm X^\top\bm X+\rho^t\mathcal{I}_p\right)^{-1}$
 \\
 $\bm z^t = \eta_\gamma\left(\frac{\bm \beta^t-\sigma_\beta^t\bm u^t}{1-\sigma_\beta^t\rho^t}, \frac{\lambda\sigma_\beta^t}{1-\sigma_\beta^t\rho^t}\right)$
 \\
 $\sigma_z^t=\frac{\sigma_\beta^t}{1-\sigma_\beta^t}\left\langle\nabla\eta_\gamma\left(\frac{\bm \beta^t-\sigma_\beta^t\bm u^t}{1-\sigma_\beta^t\rho^t}, \frac{\lambda\sigma_\beta^t}{1-\sigma_\beta^t\rho^t}\right)\right\rangle$
 \\
 $\bm u^{t+1}=\bm u^t+(1-\mathcal{D})(\bm z^t/\sigma_z^t-\bm\beta^t/\sigma_\beta^t)$
 \\
 $\rho^{t+1}=\rho^t+(1-\mathcal{D})(1/\sigma_z^t-1/\sigma_\beta^t)$}
 \ENDFOR
\end{algorithmic}
\end{algorithm}

\end{document}